\def\op#1{\operatorname{#1}}
\def\lie#1{\mathbf{#1}}
\def\g{\lie{g}}
\def\b{\lie{b}}
\def\h{\lie{h}}
\def\k{\lie{k}}
\def\a{\lie{a}}
\def\m{\lie{m}}
\def\tx{\lie{t}}
\def\cx{\lie{c}}
\def\ring#1{\mathbb #1}
\def\inv{\op{inv}}
\def\Ad{\op{Ad}}
\def\ad{\op{ad}}
\def\Gal{\op{Gal}}
\def\val{\op{val}}
\def\cP{{\mathcal P}}
\def\Wth{{W_H\rtimes \langle\theta\rangle}}
\def\ac{\operatorname{ac}}
\def\ord{\operatorname{ord}}
\def\ord{{\operatorname {\rm ord}}}
\def\ac{{\overline{\operatorname  {\rm ac}}}}
\def\tn{\operatorname {TN}}
\def\bA{{\mathbb A}}
\def\bC{{\mathbb C}}
\def\bF{{\mathbb F}}
\def\bL{{\mathbb L}}
\def\bN{{\mathbb N}}
\def\bQ{{\mathbb Q}}
\def\bR{{\mathbb R}}
\def\bZ{{\mathbb Z}}
\def\ccF{{\mathscr F}}
\def\ccC{{C}}
\def\cA{{\mathcal A}}
\def\cB{{\mathcal B}}
\def\cC{{\mathscr C}}
\def\cD{{\mathcal D}}
\def\cF{{\mathcal F}}
\def\cL{{\mathcal L}}
\def\cO{{\mathcal O}}
\def\cP{{\mathcal P}}
\def\cR{{\mathcal R}}
\def\cU{{\mathcal U}}
\def\cV{{\mathcal V}}
\mathchardef\alphag="7C0B
\mathchardef\betag="7C0C
\mathchardef\gammag="7C0D
\mathchardef\deltag="7C0E
\mathchardef\varepsilong="7C22
\mathchardef\varphig="7C27
\mathchardef\psig="7C20
\mathchardef\zetag="7C10
\mathchardef\epsilong="7C0F
\mathchardef\rhog="7C1A
\mathchardef\taug="7C1C
\mathchardef\upsilong="7C1D
\mathchardef\iotag="7C13
\mathchardef\thetag="7C12
\mathchardef\pig="7C19
\mathchardef\sigmag="7C1B
\mathchardef\etag="7C11
\mathchardef\omegag="7C21
\mathchardef\kappag="7C14
\mathchardef\lambdag="7C15
\mathchardef\mug="7C16
\mathchardef\xig="7C18
\mathchardef\chig="7C1F
\mathchardef\nug="7C17
\mathchardef\varthetag="7C23
\mathchardef\varpig="7C24
\mathchardef\varrhog="7C25
\mathchardef\varsigmag="7C26
\mathchardef\Omegag="7C0A
\mathchardef\Thetag="7C02
\mathchardef\Sigmag="7C06
\mathchardef\Deltag="7C01
\mathchardef\Phig="7C08
\mathchardef\Gammag="7C00
\mathchardef\Psig="7C09
\mathchardef\Lambdag="7C03
\mathchardef\Xig="7C04
\mathchardef\Pig="7C05
\mathchardef\Upsilong="7C07
\def\ord{{\operatorname {\rm ord}}}
\def\gM{{\mathfrak M}}
\def\11{{\mathbf 1}}
\def\ordjac{{\operatorname {\rm ord jac}}}
\def\LO{{\cL_{\cO}}}
\def\Def{{\rm Def}}
\def\RDef{{\rm RDef}}
\def\RDefe{{\rm RDef}^{\rm exp}}
\def\jac{\operatorname{jac}}
\newtheorem{thm}[subsubsection]{Theorem}
\newtheorem{lemma}[subsubsection]{Lemma}
\newtheorem{remark}[subsubsection]{Remark}
\numberwithin{equation}{subsection}
\title{Transfer Principle for the Fundamental Lemma}
\begin{document}
\author[Raf Cluckers]{Raf Cluckers}
   \address[Raf Cluckers]{Katholieke Universiteit Leuven,
Departement wiskunde, Celestijnenlaan 200B, B-3001 Leu\-ven,
Bel\-gium. Current address: \'Ecole Normale Sup\'erieure,
D\'epartement de ma\-th\'e\-ma\-ti\-ques et applications, 45 rue
d'Ulm, 75230 Paris Cedex 05, France}
   \thanks{During the preparation of this paper R. Cluckers was
   a postdoctoral fellow of the Fund for Scientific Research - Flanders
(Belgium) (F.W.O.)} \email{cluckers@ens.fr}
\urladdr{www.dma.ens.fr/$\sim$cluckers/}
   \author[Thomas Hales]{Thomas Hales}
   \address[Thomas Hales]{Department of Mathematics\\
        University of Pittsburgh\\
        Pittsburgh, PA 15217}
   \thanks{The research of T. Hales was supported
     in part by NSF grant 0503447.  He would also like to thank
     E.N.S. for its hospitality.}
   \email{hales@pitt.edu}
      \urladdr{www.math.pitt.edu/$\sim$thales/}
   \author[Fran\c{c}ois Loeser]{Fran\c{c}ois Loeser}
   \address[Fran\c{c}ois Loeser]{{\'E}cole Normale Sup{\'e}rieure,
D{\'e}partement de math{\'e}matiques et applications, 45 rue
d'Ulm, 75230 Paris Cedex 05, France (UMR 8553 du CNRS)}
   \thanks{The project was partially supported by ANR grant 06-BLAN-0183}
   \email{Francois.Loeser@ens.fr}
   \urladdr{www.dma.ens.fr/$\sim$loeser/}
   \maketitle

\maketitle

\section*{Introduction}
The  purpose of this paper is to explain how the general transfer principle
of Cluckers and Loeser \cite{clake}\cite{clexp} may be used in the study of the fundamental lemma.
We use here the word ``transfer" in a sense originating with the
Ax-Kochen-Er{\v s}ov transfer
principle in logic.    Transfer principles in model theory are results
that transfer
theorems from one field to another.  The transfer principle of  \cite{clake}\cite{clexp}
is a general result that transfers theorems about identities of $p$-adic
integrals from
one collection of fields to others.    These general transfer principles are
reviewed in Theorems \ref{tran}
and \ref{tranexp}.
The main purpose of this article is to explain how the
identities of various fundamental lemmas fall within the scope of
these general transfer principles.  Consequently, once the fundamental
lemma has been
established for one collection of fields (for example, fields of
positive characteristic),
it is also valid for others (fields of characteristic zero).   Precise
statements
appear in Theorems \ref{mt1} (for the fundamental lemma),  \ref{mt2} (for the weighted
fundamental lemma),
and Section~\ref{jyf}  (for the Jacquet-Ye relative fundamental lemma).

In an unfortunate clash of terminology, the word ``transfer" in the
context of the fundamental lemma has come to mean
the matching of smooth functions on a reductive
group with those on an endoscopic group.  We have nothing to say about
transfer in
that sense.  For example, Waldspurger's article from 1997 ``Le lemme fondamental implique le transfert" is completely unrelated
(insofar as it is possible for two articles on the fundamental lemma to
be unrelated).

The intended audience for this paper being that of  mathematicians
working in the areas of automorphic forms and representation theory,
we tried our best to make all definitions and statements from other fields
that are used in this paper understandable without any prerequisite.
In particular, we start the paper by  giving a quick presentation of first-order languages and the Denef-Pas language
and an overview   on motivic constructible functions and their integration according to \cite{cl},
before stating the general  transfer principle.
The bulk of the paper consists in proving the definability of the various data occurring in the fundamental
lemma. Once this is achieved,  it is not difficult to deduce our main
 result in \ref{themainth}, stating  that the transfer principle
holds for the  integrals occurring in the fundamental lemma, which is of special interest in view of the recent
advances by  Laumon and Ng\^o
\cite{ln}
and
Ng\^o \cite{ngo}.

Other results concerning the transfer
principle for the fundamental lemma appear
in \cite{cun}, \cite{wald}, \cite{w-weighted}.

\medskip

\textit{
 We thank
Michael Harris for inviting an expository
paper on this topic for the book
he is editing on
``Stabilisation de la formule des traces,
vari\'et\'es de Shimura,
et applications arithm\'etiques''
available at
\newline
http://www.institut.math.jussieu.fr/projets/fa/bp0.html
}

\section{First order languages and the Denef-Pas language}\label{sec1}

\subsection{Languages}A (first order) language $L$ consists of an enumerable infinite set of symbols of variables
$\cV = \{v_0, \cdots, v_n, \cdots \}$, logical symbols
$\neg$ (negation), $\wedge$ (or), $\vee$ (and), $\implies$, $\iff$, $\forall$ and
$\exists$,
together with two suites of sets $\cF_{n}$ and
$\cR_{n}$, $n \in \bN$.
Elements of  $\cF_n$ will be symbols of $n$-ary functions, elements of
 $\cR_n$ symbols of $n$-ary relations.
A $0$-ary function symbol will be called a constant symbol.
The language $L$ consists of the union of these sets of symbols.

\subsection{Terms}The set $T (L)$ of terms of the language $L$
is defined in the following way:
variable symbols and constant symbols are terms and if
 $f$ belongs to  $\cF_n$ and
 $t_1$, \dots, $t_n$ are terms, then $f (t_1, \cdots, t_n)$ is also a term.
 A more formal definition of  $T (L)$ is to view it as a subset of the set of  finite words
 on $L$. For instance,
 to the word $f t_1 \dots t_n$ corresponds the  term $f (t_1, \cdots, t_n)$.
One defines the weight of a function as its arity $- 1$ and we
 give to symbols of variables the weight $-1$. The weight of a finite word on $L$ is the sum of the weights of its symbols. Then terms correspond exactly to finite words on
 variable and function symbols
 of total weight $-1$ whose strict initial segments are of weight $\geq 0$,
 when nonempty.
If  $t$ is a term one writes  $t = t [w_0, \dots, w_n]$
to mean that all variables occurring in $t$ belong to the $w_i$'s.


 \subsection{Formulas}

An atomic formula is an expression of the form
$R (t_1, \dots, t_n)$ with  $R$ an $n$-ary relation symbol and   $t_i$ terms.
The set of formulas in $L$
is the smallest set containing atomic formulas and such that   if
$M$ and $N$  are formulas then
$\neg M$, $(M \wedge N)$, $(M \vee N)$, $(M \implies N)$, $(M \iff N)$,
$\forall v_n M$ et $\exists v_n M$ are formulas.
Formulas may also be defined as certain finite words on $L$.
(Parentheses are just a way to rewrite terms and formulas in a more
handy way, as opposed to writing them as finite words on $L$).

 Let $v$ be a variable symbol occurring in a formula $F$. If $F$ is atomic we say all occurrences of $v$ in
 $F$ are free. If $F = \neg G$ the free occurrences of  $v$ in $F$
are those in $G$.
Free occurrences of  $v$ in  $(F\alpha G)$ are those in  $F$ and those in  $G$
where $\alpha$ is either $\neg$, $\wedge$, $\vee$, $\implies$, or
$\iff$.
If $F = \forall w G$ or $\exists w G$ with $w \not=v$,
free occurrences of  $v$ in  $F$
are those in  $G$.
When  $v = w$, no occurrence of  $v$ in $F$ free.
Non free occurrences of a variable are called bounded.
Free variables in a formula  $F$ are those having at least one free occurrence.
A sentence is a formula with no free variable.

We write $F [w_0, \dots, w_n]$  if all free variables in  $F$  belong to the
 $w_i$'s (supposed to be distinct).

\subsection{Interpretation in a structure}
Let $L$ be a language. An $L$-structure $\gM$ is a set  $M$ endowed
for every $n$-ary function symbol $f$ in $L$
with  a function
$f^{\gM} : M^n \rightarrow M$ and for every $n$-ary relation $R$ with  a subset
$R^{\gM}$  of $M^n$.

 If $t [w_0, \dots, w_n]$ is a term
and $a_0$, \dots, $a_n$ belong to  $M$,
 we denote by  $t [a_0, \dots, a_n]$ the interpretation of $t$ in
 $M$ defined by interpreting  $w_i$ by $a_i$.
 Namely, the interpretation of the term $w_i$ is
$a_i$, that of the constant symbol $c$  is $c^{\gM}$, and that of
the term $f (t_1, \dots t_r)$ is
 $f^{\gM} (t_1 [a_0, \dots, a_n], \dots, t_r [a_0, \dots, a_n])$.

Similarly, if  $F [w_1, \dots, w_n]$ is a formula and  $a_1$, \dots, $a_n$ belong to  $M$,
there is a natural way to interpret $w_i$ as  $a_i$ in $M$
in the formula $F$, yielding a statement $F [a_1, \dots, a_n]$ about
the tuple $(a_1, \dots, a_n)$ in $M$ which is either true or false.
One says that
 $(a_1, \dots, a_n)$ satisfies $F$ in $\gM$, and writes
 $\gM \models F [a_1, \dots, a_n]$,
if the statement $F [a_1, \dots, a_n]$ obtained by  interpreting  $w_i$ as
$a_i$ is satisfied (true) in $M$.
 For instance,
when
 $F =R (t_1, \dots, t_r)$, then
  $\gM \models F [a_1, \dots, a_n]$ if and only if
  $R^{\gM} (t_1 [a_1, \dots, a_n], \dots, t_r  [a_1, \dots, a_n])$
holds, and
for a formula $F[w_0, \dots, w_n]$, one has
$\gM \models (\forall w_0 F) [a_1, \dots, a_n]$,   if
  and only if
  for every
  $a$ in $M$,
 $\gM \models  F [a, a_1, \dots, a_n]$.

When the language contains the binary relation symbol of equality $=$,
one usually assumes $L$-structures to be equalitarian, that is, that the relation
$=^{\gM}$ coincides with the equality relation on the set $M$.
From now on we shall denote in the same way symbols $f$ and $R$ and their interpretation $f^{\gM}$ and $R^{\gM}$. In particular we may  identify constant symbols
and their interpretation in $M$ which are elements of $M$.

We shall also be lax with the names of variables and allow other names,
like $x_i$, $x$,
$y$.

\subsection{Some examples}
Let us give some examples of languages we shall use in this paper.
It is enough to  give the  symbols which are not variables nor logical.

For the  language of abelian groups these symbols consist of
the constant symbol $0$, the two binary function symbols $+$, $-$  and equality.
The language of ordered abelian groups is obtained by adding a binary relation symbol
$<$, the language of rings by adding symbols $1$ and  $\cdot$ (with the obvious arity).
Hence a structure for the ring language is just a set with interpretations for the symbols
$0$, $1$, $+$, $-$ and $\cdot$. This set does not have to be a ring (but it will be in all cases we shall consider).

If $S$ is a set, then by the ring language with coefficients in $S$, we mean that
we add $S$ to the set of constants in the language.
For instance any ring containing $S$ will be a structure for that language.


Note  that there is a sentence $\varphi$ in the ring language such that a structure
$\gM$ satisfies $\varphi$ if and only of $\gM$ is a field, namely the conjunction of the
field axioms (there is a finite number of such axioms, each expressible by a sentence
in the ring language). On the other hand, one can show there is no
sentence in the ring language expressing  for a field to be algebraically closed.
Of course, given a natural number $n > 0$, there is a sentence expressing that every degree $n$ polynomial has a root namely
\begin{equation}
\forall a_0 \forall a_1 \cdots \forall a_n \exists x
(a_0=0\vee a_0 x^n + a_1 x^{n - 1} + \cdots + a_n = 0).
\end{equation}
Note that here $x^i$ is an abbreviation for $x \cdot x \cdot x \cdots x$ ($i$ times).
It is important to notice that we are not allowed  to quantify over $n$ here,
since it  does not correspond to a variable in the structure we are considering.

\subsection{The Denef-Pas language}
We shall need a slight generalization of the notion of language, that of many sorted languages (in fact $3$-sorted language). In a  $3$-sorted language we have $3$
sorts of variables symbols, and for relation and function symbols one should specify
the type of the variables involved and for functions also the type of the value of $f$.
A structure for a $3$-sorted language
will  consist of $3$ sets $M_1$, $M_2$ and $M_3$ together with interpretations
of the non logical, non variable symbols. For instance if $f$ is a binary function symbol,
with first variable of type $2$, second variable of type $3$, and value of type $3$,
its interpretation will be a function $M_2 \times M_3 \rightarrow M_3$.

Let us fix a field $k$ of characteristic $0$ and consider
the following $3$-sorted language, the Denef-Pas
language $\cL_{\text{DP}}$.
The $3$ sorts are respectively called
the valued field sort, the residue field sort,  and the value group sort.
The language will consist of the disjoint union of
the language of rings with coefficients in
$k ((t))$ restricted to the valued field sort, of
the language of rings with coefficients in
$k$ restricted to the residue field sort and of
the language
of {ordered groups} restricted to the value group sort, together with
two additional symbols of unary functions {$\ac$}  and {$\ord$} from the valued field sort to the
residue field and valued groups sort, respectively.
[In fact, the definition we give here  is different from that  in \cite{cl}, where for the value group sort in
$\cL_{\text{DP}}$ symbols  $\equiv_n$
for equivalence relation modulo $n$,  $n >1$ in $\bN$,
are added, but since this does not change the category of definable objects,
this change has no consequence on our statements.]

An example of an $\cL_{\text{DP}}$-structure is $(k ((t)), k, \bZ)$
with $\ac$ interpreted as the function
$\ac : k((t)) \rightarrow k$ assigning to a series its first nonzero coefficient if not zero, zero otherwise, and $\ord$ interpreted as the valuation function
$\ord : k((t)) \setminus \{0\} \rightarrow \bZ$. (There is a minor divergence here,
easily fixed,  
 since
$\ord \, 0$ is not defined.)
More generally, for any field $K$ containing $k$,
$(K ((t)), K, \bZ)$ is naturally an $\cL_{\text{DP}}$-structure.
For instance
$\ac (x^2 + (1 + t^3) y) - 5z^3$ and $\ord (x^2 + (1 + t^3) y) - 2 w + 1$
are terms in  $\cL_{\text{DP}}$,
$\forall x \exists z \neg (\ac (x^2 + (1 + t^3) y) - 5z^3 = 0)$ and
$\forall x \exists w ( \ord (x^2 + (1 + t^3) y) = 2 w + 1)$ are formulas.

\section{Integration of constructible motivic functions}\label{sec2}

\subsection{The category of definable objects}
Let $\varphi$ be a formula in the language $\cL_{\text{DP}}$  having
respectively $m$, $n$, and $r$
free variables in the various sorts. To such  a formula $\varphi$
we assign, for every field $K$ containing $k$, the subset
$h_{\varphi} (K)$ of $K ((t))^m \times K^n \times \bZ^r$
consisting of all points satisfying $\varphi$,
that is,
\begin{equation}\label{gugu}
h_{\varphi} (K) := \Bigl\{(x, \xi, \eta) \in K ((t))^m \times K^n \times \bZ^r  ;
(K((t), K, \bZ) \models \varphi (x, \xi, \eta)\Bigr\}.
\end{equation}
We shall call the
datum of such subsets for all $K$ definable (sub)assign\-ments. In
analogy with algebraic geometry, where the emphasis is not put
anymore on equations but on the functors they define, we consider
instead of formulas the corresponding subassignments (note $K
\mapsto h_{\varphi} (K)$ is in general not a functor).

More precisely,  let $F : \mathcal{C} \rightarrow \text{Set}$ be a functor from
a category  $ \mathcal{C}$ to the category of sets. By a {subassignment} $h$
of  $F$ we mean the datum, for every  object $C$ of $ \mathcal{C}$, of a
subset $h (C)$ of $F (C)$.
Most of the standard operations of
elementary set theory extend trivially to subassignments. For
instance, given subassignments $h$ and $h'$ of the same functor,
one defines subassignments $h \cup h'$, $h \cap h'$ and the
relation $h \subset h'$, etc.
When  $h \subset h'$ we say  $h$ is
a subassignment of $h'$.
A morphism $f : h \rightarrow h'$ between
subassignments of functors $F_1$ and $F_2$ consists of the datum
for every object $C$ of a map
\begin{equation}f (C) : h (C) \rightarrow h'(C).\end{equation}
The graph of $f$ is the subassignment \begin{equation}C \mapsto \text{graph} (f
(C))\end{equation}
of $F_1 \times F_2$.
Let
$k$ be a field and consider the category $F_k$ of fields
containing $k$.  (To avoid any
set-theoretical issue, we  fix a Grothendieck universe $\cU$
containing $k$ and we define ${F}_{k}$ as the small category
of all fields in $\cU$ containing $k$.)

We denote by $h [m, n, r]$ the functor $F_k
\rightarrow \text{Ens}$ given by  \begin{equation}h [m, n, r] (K) = K ((t))^m
\times K^n \times \bZ^r.\end{equation}
 In particular, $h [0, 0, 0]$ assigns the
one point set to every $K$.
We sometimes write $\bZ^r$ for $h[0,0,r]$.
Thus, to  any formula $\varphi$ in
$\cL_{\text{DP}}$  having respectively $m$, $n$,
and $r$ free variables in the various sorts, corresponds a
subassignment $h_{\varphi}$ of $h [m, n, r]$
by (\ref{gugu}). Such
subassignments are called  definable subassignments.

We denote by $\Def_k$
the   category whose objects are definable subassignments of some
$h [m, n, r]$, morphisms in $\Def_k$ being morphisms of
subassignments $f : h \rightarrow h'$ with  $h$ and $h'$ definable
subassignments of $h [m, n, r]$ and  $h [m', n', r']$ respectively
such that {the graph of $f$ is a definable subassignment}.
Note that
$h [0, 0, 0]$ is the final object in this category.

\subsection{First sketch of construction of the motivic measure}The  construction in \cite{cl} relies in an essential way
on a cell decomposition theorem due to Denef and Pas  \cite{Pas}.
Let us introduce the notion of cells.
Fix
coordinates $x = (x',z)$ on  $h [n + 1, m, r]$
with $x'$ running over
$h [n , m, r]$ and $z$ over $h [1, 0, 0]$.

A {0-cell}   in $h [n + 1, m, r]$ is a definable subassignment
$Z^0_{A}$ defined by
\begin{equation}
x' \in A \quad \text{and} \quad  z = c (x')
\end{equation}
with $A$ a definable subassignment of
$h [n , m, r]$ and $c$ a morphism
$A \rightarrow
h [1, 0, 0]$.

A {1-cell}   in $h [n + 1, m, r]$ is a definable subassignment
$Z^1_{A}$ defined by
\begin{equation}
x' \in A, \quad  \ac (z - c (x')) = \xi (x') \quad \text{and} \quad
\ord (z - c (x')) = \alpha (x')
\end{equation}
with $A$ a definable subassignment of
$h [n , m, r]$, $c$, $\xi$ and $\alpha$ morphisms from
$A$ to
$h [1, 0, 0]$, $h [0, 1, 0] \setminus \{0\}$ and $h [0, 0, 1]$,  respectively.

The  Denef-Pas Cell Decomposition Theorem states that, after adding a finite number of auxiliary parameters in the residue field and value group sorts, every definable subassignment becomes a finite disjoint union of cells:

\begin{thm} [Denef-Pas Cell Decomposition \cite{Pas}]\label{cellth}
Let $A$ be a definable subassignment $h [n + 1, m, r]$. After adding a finite number of auxiliary parameters in the residue field and value group sorts, $A$ is
a finite disjoint union of cells,
 that is,
there exists an embedding
\begin{equation}
\lambda : h [n + 1, m, r] \longrightarrow h [n + 1, m + m', r  + r']
\end{equation}
such that the composition of $\lambda$ with the projection to $ h [n + 1, m, r]$
is the identity on $A$ and such that $\lambda (A) $ is
a finite disjoint union of cells.
\end{thm}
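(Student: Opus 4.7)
The plan is to reduce the theorem to the classical one-variable Pas cell decomposition (essentially the analogue of the $p$-adic theorem of Cohen--Denef) and then to quantifier elimination. Concretely, the engine is Pas's quantifier elimination for $\cL_{\text{DP}}$: every formula is equivalent to one in which valued field quantifiers have been eliminated, the remaining quantifiers range only over the residue field and value group sorts, and the valued field variables appear only inside terms of the form $\ac(P(x))$ and $\ord(P(x))$ for $P$ a polynomial with coefficients in $k((t))$. Granting this, after adding auxiliary variables in the two abelian sorts one may assume that $A \subset h[n+1,m,r]$ is cut out by a quantifier-free formula involving only $\ac$ and $\ord$ applied to finitely many polynomials $f_1(x',z),\dots,f_N(x',z)$ in the distinguished variable $z$.

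The core step is then a one-variable assertion: given a finite family of polynomials $f_i(x',z)$ over a definable base $B \subset h[n,m,r]$, one can finitely partition $B$ into definable pieces and, on each piece, produce a ``center'' $c(x') : B_j \to h[1,0,0]$ and morphisms $\xi_i,\alpha_i$ into $h[0,1,0]$ and $h[0,0,1]$ such that on the cells
\begin{equation}
\ac(z-c(x')) = \xi(x'),\qquad \ord(z-c(x')) = \alpha(x'),
\end{equation}
the quantities $\ac(f_i(x',z))$ and $\ord(f_i(x',z))$ are entirely determined by $x'$ alone. The proof of this single-variable step proceeds by induction on the total $z$-degree of the $f_i$: one separates the \emph{Hensel regime}, in which $\ord(z-c(x'))$ is so large (and $c$ so close to a root of some $f_i$) that Hensel's lemma gives an exact formula $\ord(f_i(z)) = \ord(f_i'(c))+\ord(z-c)$, from the complementary regime in which some factor can be pulled out of $f_i$, reducing the degree.

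Once this one-variable decomposition is in hand, the theorem follows almost formally. Each piece of the resulting partition is either of the form $z = c(x')$ with $x' \in B_j$, giving a $0$-cell, or of the form prescribed by a triple $(c,\xi,\alpha)$, giving a $1$-cell; the extra coordinates $\xi$ and $\alpha$, together with the auxiliary parameters produced by quantifier elimination, assemble into the required embedding $\lambda : h[n+1,m,r] \to h[n+1,m+m',r+r']$ whose composition with the projection is the identity on $A$.

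The hard part is the one-variable step, and within it the Hensel-lemma bookkeeping: one must certify that the various regimes (close to a root, close to a critical value, ``generic'') can all be described by $\cL_{\text{DP}}$-formulas whose parameters live \emph{only} in the residue field and value group sorts, since no new valued field parameters are allowed in $\lambda$. This is exactly the role played by the $\ac$ symbol, and is precisely why the theorem permits auxiliary parameters in the two abelian sorts but not in the valued field sort. Making this bookkeeping uniform across the family $f_1,\dots,f_N$ is the only genuinely nontrivial combinatorial obstacle; everything else is a direct consequence of quantifier elimination and the formalism of definable subassignments introduced above.
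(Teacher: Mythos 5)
The paper does not prove this theorem; it is quoted directly from Pas \cite{Pas}, so there is no internal proof to compare against. Your sketch is a faithful high-level account of the strategy in \cite{Pas} and of the Cohen--Denef tradition it descends from: reduce via quantifier elimination to a formula built from $\ac$ and $\ord$ of polynomials, then run a one-variable Hensel-based induction on the $z$-degree to produce centers and cells, and observe that the auxiliary parameters necessarily land in the residue-field and value-group sorts because $\ac(z - c(x'))$ and $\ord(z - c(x'))$ take values there.

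Two caveats about how your sketch relates to what Pas actually does. First, in Pas's development the one-variable cell decomposition for \emph{prepared} formulas (those already expressed through $\ac$ and $\ord$ of polynomials) is established first, by exactly the Hensel bookkeeping you describe, and quantifier elimination in $\cL_{\text{DP}}$ is then \emph{derived} from it; treating QE as an independent black-box input, as you do, inverts that dependency. The chain ``QE $\Rightarrow$ cell decomposition for prepared formulas $\Rightarrow$ cell decomposition for all definable subassignments'' is logically sound once both results exist, but it obscures that there is no route to QE that bypasses the very induction you outline. Second, you leave implicit the production of the centers $c(x')$ as definable morphisms into $h[1,0,0]$ without introducing new valued-field parameters: this is nontrivial and in \cite{Pas} is handled through Hensel factorization and approximate roots with controlled error terms, which is where most of the real bookkeeping lies. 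You correctly flag this at the end as ``the only genuinely nontrivial combinatorial obstacle,'' but the flag is doing a lot of work; a complete proof would need to spell out why those centers, and the case distinctions between the Hensel regime and its complement, remain definable uniformly across the family $f_1,\dots,f_N$ and across the base $B$.
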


The construction of the motivic measure
$\mu (A)$ for a definable subassignment $A$ of
$h [n, m, r]$ goes roughly as follows (more details will be given in
\ref{mm}).
  The cell decomposition theorem expresses a definable
  subassignment (in $h[n,m,r]$, $n > 0$) as a disjoint union of cells.  The measure
  of a definable subassignment is defined to be the sum of the measures of its
  cells.  In turn,  a cell in $h[n,m,r]$ is expressed in terms
  of a definable subassignment $B$ in
  $h[n-1, m', r']$ and auxiliary data.   The measure of a cell can then be defined
  recursively in terms of the "smaller" definable subassignment $B$.   The
  base case of the recursive definition is $n=0$ (with larger values of
$m$ and $r$).
  When $n=0$,
  one may consider the {counting measure} on the
$\bZ^r$-factor and the  {tautological measure} on the
$h [0, m, 0]$-factor, assigning to a definable subassignment of $h [0, m, 0]$
its class in $\cC_+ (\text{point})$ (a semiring defined in the next section).
The whole point is to check that the construction is {invariant under permutations}
of valued field coordinates. This is the more difficult part of the proof and is essentially equivalent to a form of the
{motivic Fubini Theorem}.

\subsection{Constructible functions}\label{2.3}
For $X$ in $\Def_k$ we now define
the semiring $\cC_+(X)$, resp ring $\cC(X)$,
of non negative constructible motivic functions, resp.
constructible motivic functions.

One considers the category $\Def_X$ whose objects are morphisms
$Y \rightarrow X$ in $\Def_k$, morphisms being morphisms
$Y \rightarrow Y'$ compatible with the projections to $X$.
Of interest to us will be
the subcategory $\RDef_X$ of $\Def_X$ whose
objects are definable subassignments of  $X \times h [0, n,
0]$, for variable $n$.
We shall denote by
$SK_0 (\RDef_X)$ the free abelian {semigroup} on isomorphism classes
of objects of $\RDef_X$ modulo the additivity relation
\begin{equation}
[Y]+ [Y'] = [Y \cup Y'] +[Y \cap Y'].
\end{equation}
It is endowed with a natural semiring structure.
One defines similarly the Grothen\-dieck ring
$K_0 (\RDef_X)$,  which is the ring associated to the semiring
$SK_0 (\RDef_X)$.
Proceeding this way, we only defined ``half" of
$\cC_+ (X)$ and $\cC (X)$.

To get the remaining ``half"  one considers
the ring
\begin{equation}
\bA := \bZ \Bigl[\bL, \bL^{-1}, \Bigl(\frac{1}{1 - \bL^{-i}}\Bigr)_{i > 0} \Bigr].
\end{equation}
\medskip
For $q$ a real number $>1$,
we denote by  $\vartheta_q$ the ring morphism
\begin{equation}
\vartheta_q : \bA \longrightarrow \bR
\end{equation}
{sending $\bL$ to $q$}
and we consider
the semiring
\begin{equation}
\bA_+ := \Bigl\{x \in \bA \mid \vartheta_q (x) \geq 0, \forall q > 1\Bigr\}.
\end{equation}
We denote by $\vert X\vert$ the set
of points of $X$, that is,  the set of pairs $(x_0, K)$
with $K$ in $F_k$
and $x_0 \in X (K)$,
and we
consider the subring $\cP (X)$ of the ring of functions $\vert
X \vert \rightarrow \bA$ generated by constants in $\bA$ and by all
functions $\alpha$ and $\bL^\alpha$ with $\alpha: X \rightarrow
\bZ$ definable morphisms.
We define $\cP_+ (X)$ as the semiring of functions in  $\cP
(X)$ taking their values in $\bA_+$.
These are the second ``halves".

To glue the two ``halves", one proceed as follows.
One denotes by $\bL - 1$  the class
of the subassignment $x \not= 0$ of $X \times h [0, 1, 0]$ in
$SK_0 (\RDef_X)$, resp $K_0 (\RDef_X)$.
One considers  the subring $\cP^0 (X)$ of  $\cP (X)
$, resp. the subsemiring $\cP^0_+ (X)$ of  $\cP_+ (X) $, generated
by functions of the form ${\bf 1}_Y$ with $Y$ a definable
subassignment of $X$ (that is, ${\bf 1}_Y$ is the characteristic function of $Y$), and by the constant function $\bL - 1$.
We have canonical morphisms $\cP^0 (X) \rightarrow K_0 (\RDef_X)$
and $\cP^0_+ (X) \rightarrow SK_0 (\RDef_X)$.
We may now set
\begin{equation*}
\cC_+ (X) = SK_0 (\RDef_X) \otimes_{\cP^0_+ (X)} \cP_+ (X)\end{equation*}
and
\begin{equation*}\cC (X) = K_0 (\RDef_X) \otimes_{\cP^0 (X)} \cP (X).
\end{equation*}

There are some easy functorialities.
For
every morphism $f : S \rightarrow S'$, there is a natural pullback
by $f^* : SK_0
(\RDef_{S'}) \rightarrow SK_0 (\RDef_S)$ induced by
the fiber product. If $f : S \rightarrow S'$ is a morphism
in  $\RDef_{S'}$,
composition with  $f$ induces a morphism $f_! : SK_0 (\RDef_S)
\rightarrow SK_0 (\RDef_{S'})$. Similar constructions apply to $K_0$.
If $f : S \rightarrow S'$ is a morphism in $\Def_k$, one shows in
\cite{cl} that the morphism $f^*$ may naturally be extended to a
morphism
\begin{equation}f^* : \cC_+ (S')
\longrightarrow \cC_+ (S).
\end{equation} If, furthermore, $f$ is a morphism in
$\RDef_{S'}$, one shows that the morphism $f_!$ may naturally be
extended to
\begin{equation}\label{fc}f_!
: \cC_+ (S) \longrightarrow \cC_+(S').\end{equation}
Similar functorialities exist for  $\cC$.

\subsection{Taking care of dimensions}\label{dim}
In fact, we shall need to consider not only functions as we just
defined, but functions defined almost everywhere in a given
dimension, that we call $\ccF$unctions. (Note the calligraphic
capital in $\ccF$unctions.)

We start by defining a good notion of dimension for  objects of
$\Def_k$. Heuristically, that dimension corresponds to counting
the dimension only in the valued field variables, without taking
in account the remaining variables. More precisely, to any
algebraic subvariety $Z$ of $\bA^m_{k ((t))}$ we assign the
definable subassignment $h_Z$ of $h [m, 0, 0]$ given by  $h_Z (K)
= Z (K ((t)))$. The Zariski closure of a subassignment $S$ of $h
[m, 0, 0]$ is the intersection $W$ of all algebraic subvarieties
$Z$ of $\bA^m_{k ((t))}$ such that $S \subset h_Z$. We define the
dimension of $S$ as $\dim S := \dim W$. In the general case, when
$S$ is a subassignment of $h [m, n, r]$, we define  $\dim S$ as
the  dimension of the image of  $S$ under the  projection $h [m,
n, r] \rightarrow h [m, 0, 0]$.
One can prove that isomorphic objects of $\Def_k$
have the same dimension.

For every non negative integer $d$, we denote by $\cC_+^{\leq d}
(S)$ the ideal of  $\cC_+ (S)$ generated by functions $\11_{Z}$ with
$Z$ definable subassignments of $S$ with $\dim Z \leq d$. We set
$\ccC_+ (S) = \oplus_d  \ccC^d_+ (S)$ with $\ccC^d_+ (S) := \cC_+^{\leq d}
(S) / \cC_+^{\leq d-1} (S)$. It is a graded abelian semigroup, and
also a $\cC_+ (S)$-semimodule. Elements of $\ccC_+ (S) $ are called
positive constructible $\ccF$unctions on  $S$. If $\varphi$ is a
function lying in $\cC_+^{\leq d} (S)$ but not in  $\cC_+^{\leq d -
1} (S)$, we denote by  $[\varphi]$ its  image in $\ccC^d_+ (S)$. One
defines similarly $\ccC (S)$ from $\cC (S)$.

One of the reasons why we consider functions which are defined
almost everywhere originates in the differentiation of functions
with respect to the valued field variables: one may show that a
definable function $c : S\subset h[m,n,r] \rightarrow h [1, 0, 0]$
is differentiable (in fact even analytic) outside a definable
subassignment of $S$ of dimension $< {\rm dim} S$.
In particular, if $f : S \rightarrow S'$ is an isomorphism in
$\Def_k$, one may define
a function $\ordjac f$, the order of the jacobian of $f$, which is defined
almost everywhere and is equal almost everywhere to a definable
function, so we may define $\bL^{- \ordjac f}$ in $\ccC_+^d (S)$ when
$S$ is of dimension $d$.

\subsection{Push-forward}\label{gm}

Let $k$ be a field of characteristic zero. Given $S$ in  $\Def_k$,
we define $S$-integrable $\ccF$unctions and construct pushforward
morphisms for these:

\begin{thm} [Cluckers-Loeser \cite{cl}]\label{mt}Let $k$ be a field of characteristic zero and
let $S$ be in  $\Def_k$. There exists a unique functor $Z \mapsto
{\rm I}_S \ccC_+(Z)$ from  $\Def_S$ to the category of abelian
semigroups, the functor of  $S$-integrable $\ccF$unctions, assigning
to every morphism $f : Z \rightarrow Y$ in $\Def_S$ a morphism  $f_!
: {\rm I}_S \ccC_+(Z) \rightarrow {\rm I}_S \ccC_+(Y)$ such that for every
$Z$ in $\Def_S$, ${\rm I}_S \ccC_+(Z)$ is a graded subsemigroup of $\ccC_+
(Z)$  and  ${\rm I}_S \ccC_+(S) = \ccC_+ (S)$, satisfying the following
list of axioms (A1)-(A8).
\end{thm}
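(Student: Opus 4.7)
The plan is to construct $f_!$ by decomposing an arbitrary morphism $f : Z \to Y$ in $\Def_S$ into elementary projections --- each removing a single coordinate in a single sort --- and then defining $f_!$ separately on each such projection. Uniqueness follows by induction on this decomposition: the axioms (A1)--(A8) are expected to force the value of $f_!$ on inclusions and on isomorphisms (via a change-of-variables formula involving $\bL^{-\ordjac f}$), to specify its effect on the three elementary types of projection, and to impose compatibility with composition. Any functor satisfying the axioms is therefore pinned down on a generating family of morphisms, and hence on all of $\Def_S$.

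For existence I would handle the three elementary cases in turn. A projection $Z \times \bZ \to Z$ along a value group variable is defined by fiberwise summation; $S$-integrability means precisely that this family is summable in $\bA$, which makes sense because $\bA$ contains all inverses $1/(1 - \bL^{-i})$ needed to express geometric sums. A projection $Z \times h[0,1,0] \to Z$ along a residue field variable is defined on the $\RDef$-half at the Grothendieck semiring level via composition of morphisms in $\RDef$, and then extended to all of $\cC_+$ via the tensor structure over $\cP^0_+$. A projection $Z \times h[1,0,0] \to Z$ along a valued field variable is the substantial case: one applies the Denef-Pas cell decomposition (Theorem \ref{cellth}) to express the source as a finite disjoint union of $0$-cells and $1$-cells; on a $0$-cell the projection is a definable bijection and one uses pullback together with the inverse jacobian, while on a $1$-cell parametrized by data $(A, c, \xi, \alpha)$ one declares the integral of $\11$ along the valued field variable to be the function $\bL^{-\alpha(x')}(1 - \bL^{-1})$ on $A$, reflecting the motivic volume of a residue ball.

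The principal obstacle is consistency: showing that the resulting $f_!$ depends only on $f$ and not on the chosen factorization into elementary projections. Independence among pure value group projections, and among pure residue field projections, is immediate from the explicit formulas, and commutations between projections in distinct sorts can be checked by direct computation on cells. The hard step, by a wide margin, is commutation between two projections along different valued field variables: this is the motivic Fubini theorem. Its proof compares the two cell decompositions (one starting with each variable) by passing to a common refinement, invokes the change-of-variables formula on the intermediate definable bijections, and tracks the jacobian through all pieces. This is precisely why one is forced to work with $\ccF$unctions defined almost everywhere: the analytic bijection underlying a $0$-cell only exists outside a lower-dimensional subassignment, so the dimension-graded formalism of Section \ref{dim} is essential to absorb the exceptional loci produced when switching the order of integration. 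Once Fubini is established, each axiom (A1)--(A8) is verified on generators and the functor extends uniquely by composition.
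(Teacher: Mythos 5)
Your sketch matches the paper's own outline of the Cluckers--Loeser construction (which the paper cites from \cite{cl} rather than proving in full): decompose into elementary one-variable projections, handle value-group variables by summation (axiom (A6)), residue-field variables at the Grothendieck-semiring level (axiom (A5)), and valued-field variables one at a time via Denef--Pas cell decomposition reducing to (A7)/(A8); the central difficulty is the motivic Fubini theorem, i.e.\ independence of the order of the valued-field variables, which \cite{cl} handles using a notion of bicells (your ``common refinement'' phrasing is a reasonable gloss on the same idea), and the dimension-graded $\ccF$unction formalism is invoked for precisely the reason you give. One small slip: the volume along the valued-field variable of a $1$-cell with data $(A,c,\xi,\alpha)$ --- where both $\ord(z-c(x'))=\alpha(x')$ and $\ac(z-c(x'))=\xi(x')$ are prescribed --- should be $\bL^{-\alpha-1}$ as in axiom (A7), not $\bL^{-\alpha}(1-\bL^{-1})$; your formula is the volume of the larger annulus where only the order is fixed and the angular component is left free.
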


\noindent {\bf {\rm (A1a)} (Naturality)}

\noindent If  $S \rightarrow S'$ is a morphism in  $\Def_k$ and $Z$
is an object in $\Def_S$, then any $S'$-integrable $\ccF$unction
$\varphi$ in $\ccC_+(Z)$ is $S$-integrable and  $f_! (\varphi)$ is the
same, considered in  ${\rm I}_{S'}$ or in ${\rm I}_{S}$.

\bigskip

\noindent {\bf {\rm (A1b)} (Fubini)}

\noindent A positive $\ccF$unction  $\varphi$ on $Z$ is
$S$-integrable if and only if it is $Y$-integrable and $f_!
(\varphi)$ is $S$-integrable.

\bigskip

\noindent {\bf {\rm (A2)} (Disjoint union)}

\noindent If $Z$ is the disjoint union of two definable
subassignments $Z_1$ and  $Z_2$, then the isomorphism $\ccC_+ (Z)
\simeq \ccC_+ (Z_1) \oplus \ccC_+ (Z_2)$ induces an  isomorphism ${\rm
I}_S \ccC_+ (Z) \simeq {\rm I}_S \ccC_+ (Z_1) \oplus {\rm I}_S \ccC_+
(Z_2)$, under which  $f_! = f_{|Z_1 !} \oplus f_{|Z_2 !}$.

\bigskip

\noindent {\bf {\rm (A3)} (Projection formula)}

\noindent For every $\alpha$ in  $\cC_+ (Y)$ and every  $\beta$
in ${\rm I}_S \ccC_+ (Z)$, $\alpha f_! (\beta)$ is $S$-integrable if
and only if $f^* (\alpha) \beta$ is, and then $f_! (  f^* (\alpha)
\beta)  = \alpha f_! (\beta)$.

\bigskip

\noindent {\bf {\rm (A4)} (Inclusions)}

\noindent If  $i : Z \hookrightarrow Z'$ is the inclusion of
definable subassignments of the same object of $\Def_S$, then 
$i_!$ is
induced by extension by zero outside  $Z$ and sends 
${\rm I}_S \ccC_+ (Z)$ injectively to ${\rm I}_S \ccC_+ (Z')$.

\bigskip

\noindent {\bf {\rm (A5)} (Integration along
residue field variables)}

\noindent Let $Y$ be an object of  $\Def_S$ and denote by $\pi$ the
projection $Y [0, n, 0] \rightarrow Y$. A $\ccF$unction $[\varphi]$
in $\ccC_+ (Y [0, n, 0])$ is $S$-integrable if and only if, with
notations of \ref{fc}, $[\pi_! (\varphi)]$ is $S$-integrable and
then $\pi_! ([\varphi]) = [\pi_! (\varphi)]$.

Basically this  axiom means that
integrating with respect to
variables in the residue field
just amounts
to taking the  pushforward induced by composition at the level
of Grothendieck semirings.

\bigskip

\noindent {\bf {\rm (A6)} (Integration along
$\bZ$-variables)}
Basically, integration along $\bZ$-variables corresponds to summing
over the integers, but to state precisely (A6), we need to
perform some preliminary  constructions.

Let us consider a function in $\varphi$ in
$\cP (S [0, 0, r])$, hence
$\varphi$ is a function $\vert S \vert \times \bZ^r \rightarrow A$.
We shall say  $\varphi$ is
$S$-integrable if for every
$q > 1$ and every  $x$ in $\vert S\vert$,
the series $\sum_{i \in \bZ^r} \vartheta_q (\varphi (x, i))$
is summable. One proves that
if $\varphi$ is
$S$-integrable there exists a unique function $\mu_S (\varphi)$
in $\cP (S)$ such that
$\vartheta_q (\mu_S (\varphi) (x))$ is equal to the sum of the previous series
for all $q > 1$ and all  $x$ in $\vert S\vert$.
We denote by  ${\rm I}_S \cP_+ (S[0, 0, r])$
the set of  $S$-integrable functions in
$ \cP_+ (S[0, 0, r])$
and we set
\begin{equation}
{\rm I}_S \cC_+ (S[0, 0, r]) = \cC_+ (S)  \otimes_{\cP_+ (S)} {\rm I}_S \cP_+ (S[0, 0, r]).
\end{equation}
Hence ${\rm I}_S \cP_+ (S[0, 0, r])$
is a sub-$\cC_+ (S)$-semimodule of
$ \cC_+ (S[0, 0, r])$ and $\mu_S$ may be extended by tensoring to
\begin{equation}
\mu_S : {\rm I}_S \cC_+ (S[0, 0, r]) \rightarrow \cC_+ (S).
\end{equation}

Now we can state (A6):

\noindent Let $Y$ be an object of $\Def_S$ and denote by $\pi$ the
projection $Y [0, 0, r] \rightarrow Y$. A $\ccF$unction $[\varphi]$
in $\ccC_+ (Y [0, 0, r])$ is  $S$-integrable if and only if there
exists $\varphi'$ in $\cC_+ (Y [0, 0, r])$ with  $[\varphi'] =
[\varphi]$ which is  $Y$-integrable in the previous sense and such
that $[\mu_Y (\varphi')]$ is $S$-integrable. We then have $\pi_!
([\varphi]) = [\mu_Y (\varphi')]$.

\bigskip

\noindent {\bf {\rm (A7)} (Volume of balls)}
It is natural to require  (by analogy with the $p$-adic case)
that the volume of a ball
$\{ z \in h [1, 0, 0] \mid \ac (z -c ) = \alpha , \ac (z -c ) = \xi\}$,
with $\alpha$ in $\bZ$, $c$ in $k ((t))$ and $\xi$ non zero in $k$,
should be $\bL^{- \alpha -1}$.
(A7) is a relative version of that statement:

\noindent Let  $Y$ be an object in $\Def_S$ and let  $Z$ be the
definable subassignment of $Y [1, 0, 0] $ defined by  $\ord (z - c
(y)) = \alpha (y)$ and $\ac (z - c (y)) = \xi (y)$, with  $z$ the
coordinate on the $\bA^1_{k ((t))}$-factor and  $\alpha, \xi, c$
definable functions on $Y$ with values respectively in $\bZ$,
$h[0, 1, 0] \setminus \{0\}$, and  $h[1, 0, 0]$. We denote by  $f
: Z \rightarrow Y$ the morphism induced by  projection. Then
$[\11_Z]$ is $S$-integrable if and only if $\bL^{-\alpha - 1}
[\11_Y]$ is, and then $f_! ([\11_Z]) = \bL^{-\alpha - 1} [\11_Y]$.

\bigskip

\noindent {\bf {\rm (A8)} (Graphs)} This last axiom expresses the
pushforward for graph projections.
It relates volume and differentials and is a special
case of the change of variables Theorem \ref{mcvf}.

\noindent Let $Y$ be in  $\Def_S$ and let $Z$ be the definable
subassignment of $Y [1, 0, 0] $ defined by  $z - c (y) = 0$ with
$z$ the coordinate on the $\bA^1_{k ((t))}$-factor and  $c$ a
morphism  $Y \rightarrow h[1, 0, 0]$. We denote by  $f : Z
\rightarrow Y$ the morphism induced by projection. Then $[\11_Z]$
is $S$-integrable if and only if $\bL^{(\ordjac f) \circ f^{-1}}$ is,
and then  $f_! ([\11_Z]) = \bL^{(\ordjac f) \circ f^{-1}}$.

\bigskip

Once Theorem \ref{mt} is proved, one may proceed as follows to
extend the constructions from $\ccC_+$ to $\ccC$. One defines ${\rm I}_S \ccC
(Z)$ as the subgroup of $\ccC (Z)$ generated by the image of  ${\rm
I}_S \ccC_+ (Z)$. One shows that if  $f : Z \rightarrow Y$ is a
morphism in $\Def_S$, the  morphism $f_! : {\rm I}_S \ccC_+(Z)
\rightarrow {\rm I}_S \ccC_+(Y)$ has a natural extension
\begin{equation}\label{IC(Z)}
f_! : {\rm I}_S \ccC(Z) \rightarrow {\rm I}_S \ccC(Y).
\end{equation}

\medskip

The proof of Theorem \ref{mt} is quite long and involved. In a
nutshell, the basic idea is the following. Integration along
residue field variables is controlled by (A5) and integration
along $\bZ$-variables by (A6). Integration along valued field
variables is constructed one variable after the other. To
integrate with respect to one valued field variable, one may,
using (a variant of) the cell decomposition Theorem \ref{cellth}
(at the cost of introducing additional new residue field and
$\bZ$-variables), reduce to the case of cells which is covered by
(A7) and (A8).
An important step is to show that this is independent of the choice of a
cell decomposition. When one integrates with respect to more than
one valued field variable (one after the other) it is crucial to
show that it is independent of the order of the variables, for
which we use a notion of bicells.

\subsection{Motivic measure}\label{mm}
The relation
of Theorem \ref{mt} with motivic integration is the following.
When  $S$ is equal to  $h [0, 0, 0]$, the final object of
$\Def_k$, one writes ${\rm I} \ccC_+ (Z)$ for
 ${\rm I}_S \ccC_+ (Z)$ and we shall say integrable for $S$-integrable,
and similarly for  $\ccC$.
Note that  ${\rm I} \ccC_+ (h [0, 0, 0]) = \ccC_+ (h [0, 0, 0]) =
SK_0 (\RDef_k) \otimes_{\bN [\bL - 1]} A_+$ and that
${\rm I}\ccC (h [0, 0, 0]) =  K_0 (\RDef_k) \otimes_{\bZ [\bL]} A$.
For $\varphi$ in  ${\rm I} \ccC_+ (Z)$, or in  ${\rm I} \ccC (Z)$, one defines the
motivic integral
$\mu (\varphi)$ by $\mu (\varphi) = f_! (\varphi)$ with
$f$ the morphism  $Z \rightarrow h [0, 0, 0]$.

Let $X$ be in $\Def_k$ of dimension $d$.
Let
$\varphi$ be a function in
$\cC_+ (X)$, or in $\cC (X)$. We shall say
$\varphi$ is integrable if its class
$[\varphi]_d$ in
$\ccC^d_+ (X)$, resp. in $\ccC^d (X)$, is integrable,
and we shall set
$$
\mu (\varphi) = \int_X \varphi \, d\mu = \mu ([\varphi]_d).
$$


Using the following Change of Variables Theorem \ref{mcvf}, one may develop the
integration on {global} (non affine) objects endowed with a differential form of top degree
(similarly as in the $p$-adic case), cf. \cite{cl}.

\begin{thm}[Cluckers-Loeser \cite{cl}]\label{mcvf}
Let $f : Y \rightarrow X$ be an isomorphism in $\Def_k$.
For any integrable function $\varphi$ in
$\cC_+ (X)$ or $\cC (X)$,
$$
\int_X \varphi d\mu =
\int_Y  \bL^{ -\ord \jac (f)} f^* (\varphi) d\mu.
$$
\end{thm}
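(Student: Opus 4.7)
My strategy is to reduce the statement to axiom (A8), which is explicitly flagged in the paper as a special case of this theorem, by means of the projection formula (A3), cell decomposition (Theorem \ref{cellth}), and Fubini (A1b).

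\textbf{Reduction to characteristic functions.} First I would remove $\varphi$ from the picture. Let $p_X\colon X\to h[0,0,0]$ and $p_Y\colon Y\to h[0,0,0]$ be the structural morphisms; since $f$ is an isomorphism we have $p_Y=p_X\circ f$, so by the functoriality portion of Theorem \ref{mt}, $p_{Y,!}=p_{X,!}\circ f_!$. Consequently it suffices to prove the fiberwise identity
\begin{equation*}
f_!\bigl(\bL^{-\ordjac f}\,f^{*}(\varphi)\bigr)=\varphi
\end{equation*}
in $\mathrm{I}_X\ccC_+(X)$ (or $\mathrm{I}_X\ccC(X)$). The projection formula (A3) applied to $\alpha=\varphi\in\cC_+(X)$ and $\beta=\bL^{-\ordjac f}\in\mathrm{I}_X\ccC_+(Y)$ rewrites the left-hand side as $\varphi\cdot f_!\bigl(\bL^{-\ordjac f}\bigr)$, so everything collapses to proving
\begin{equation*}
f_!\bigl(\bL^{-\ordjac f}\bigr)=[\11_X].
\end{equation*}
This is the case $\varphi=[\11_X]$, and handling it for both $\ccC_+$ and $\ccC$ is the same by the definition \eqref{IC(Z)}.

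\textbf{Decomposition of $f$.} I would next use cell decomposition (Theorem \ref{cellth}) on $Y$, possibly after pulling through auxiliary residue-field and $\bZ$-parameters. Since $f$ is an isomorphism onto $X$, each cell of $Y$ is sent to a definable subassignment of $X$, and by (A2) (Disjoint union) and (A4) (Inclusions) it is enough to prove the identity for $f$ restricted to a single cell. Using (A5) and (A6) we peel off the residue-field and value-group factors---these are controlled by the Grothendieck-semiring pushforward and by $\mu_S$, and the isomorphism $f$ restricted to such variables contributes nothing to $\ordjac f$, which depends only on valued-field coordinates (as stressed in \ref{dim}). This reduces everything to an isomorphism $f$ between definable subassignments of $h[m,0,0]$ and $h[m,0,0]$.

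\textbf{One valued-field variable at a time.} Now I would argue by induction on the number $m$ of valued-field variables, using (A1b) (Fubini) to integrate one coordinate at a time. For $m=1$, after a further cell decomposition of the source the morphism $f$ factors through its graph
\begin{equation*}
Y\;\xrightarrow{\;y\mapsto (y,f(y))\;}\;\Gamma_f\subset Y\times X\;\xrightarrow{\;\pi_X\;}\;X,
\end{equation*}
where $\pi_X$ is an isomorphism by injectivity of $f$. Axiom (A8) applied to both arrows (the first is a graph embedding built from $f$, the second from $f^{-1}$) produces the factor $\bL^{-\ordjac f}$ precisely, and independence of the choice of cell decomposition---built into the existence/uniqueness part of Theorem \ref{mt}---guarantees the identity. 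For general $m$ one writes $f$ locally, via cell decomposition, as a composition in which one valued-field coordinate at a time is expressed in terms of the others through a function $c$; repeatedly applying (A8) yields a product of $\bL^{-\ordjac f_i}$ factors whose sum of $\ord$-Jacobians is $\ordjac f$ by the chain rule for formal derivatives, and Fubini (A1b) assembles the successive one-variable pushforwards into $f_!$.

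\textbf{Main obstacle.} The technical core is the independence of the iterated construction from the order of valued-field variables and from the chosen cell decomposition, i.e., the motivic Fubini property. Fortunately this is already subsumed into the statement of Theorem \ref{mt}, so the real work is the bookkeeping in Step 3: arranging the cell decomposition so that $f$, written in the cell coordinates, really is a composition of graph morphisms of the type covered by (A8), and verifying that the accumulating $\ordjac$ factors fit together to give $\ordjac f$ almost everywhere on $Y$. Once that compositional chain-rule identity is in place, the axioms do the rest.
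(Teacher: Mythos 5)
The paper does not give its own proof of Theorem~\ref{mcvf}; it is imported wholesale from \cite{cl}, where it appears as the change-of-variables formula and is proved over several pages. So the comparison is to the argument you would find there, not to anything in this text. Your high-level strategy---reduce to $f_!\bigl[\bL^{-\ordjac f}\bigr]=[\11_X]$ by the projection formula, decompose via cells, and invoke (A8)---is the right circle of ideas and matches the spirit of \cite{cl}. But as written there is a genuine gap in the middle step.

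The claim that one can ``peel off the residue-field and value-group factors'' via (A5) and (A6), leaving an isomorphism between subassignments of $h[m,0,0]$, does not hold for a general isomorphism $f$. Axioms (A5) and (A6) compute pushforwards along the \emph{coordinate projections} $Y[0,n,0]\to Y$ and $Y[0,0,r]\to Y$; they say nothing about a morphism $f:Y\to X$ that mixes sorts. A definable isomorphism typically does mix them: it can carry a valued-field coordinate to a tuple involving $\ord$, $\ac$, residue-field and value-group data, and the cell decomposition of $Y$ alone does not make its image in $X$ a cell, nor does it decouple the sorts in $f$. The phrase ``$f$ restricted to such variables'' presupposes a block-diagonal structure that is simply not there. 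The correct route is to work with the graph $\Gamma_f\subset Y\times X$ and cell-decompose \emph{that}, so that both projections $\Gamma_f\to Y$ and $\Gamma_f\to X$ become concatenations of relative cells; it is at that stage, on the graph, that (A5), (A6), (A7), (A8) become applicable one variable at a time. Your Step 3 begins to do this, but only after the unjustified Step 2 has supposedly cleared away the auxiliary sorts.

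Two further points need shoring up even once the graph is in play. First, (A8) computes the pushforward of $[\11_Z]$ along the \emph{projection} $\Gamma\to Y$ from a one-variable graph, not along the graph embedding $Y\to\Gamma$; to handle the embedding direction you must use the functorial identity $(\pi_Y)_!\circ i_!=\mathrm{id}$ (valid since $\pi_Y\circ i=\mathrm{id}$) together with the relation $\ordjac\,\pi_Y=-(\ordjac\,i)\circ\pi_Y$ to invert the formula, and this needs to be said. Second, the chain rule $\ordjac(g\circ f)=(\ordjac g)\circ f+\ordjac f$ (almost everywhere) is doing all of the work in assembling the one-variable factors into $\ordjac f$; in \cite{cl} this is a lemma in its own right, proved from the almost-everywhere analyticity recalled in \ref{dim}, and it cannot be waved through. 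Finally, you quietly assume $\bL^{-\ordjac f}\in\mathrm{I}_X\ccC_+(Y)$ in order to apply (A3); integrability is part of what must be established along the way, not a hypothesis.
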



Also, the construction we outlined of the motivic measure
carries over almost literally to a relative setting:
one can develop a {relative} theory of motivic integration:
integrals depending on parameters of functions in $\cC_+$ or $\cC$ still belong to
$\cC_+$ or $\cC$ as functions of these parameters.

More specifically,
if $f : X \rightarrow \Lambda$ is a morphism and $\varphi$ is a function in
$\cC_+ (X)$ or $\cC (X) $ that is
 {relatively integrable} (a notion defined in \cite{cl}),
 one constructs in \cite{cl} a function
 \begin{equation}\label{relativemu}
 \mu_{\Lambda}(\varphi)
 \end{equation}
in $\cC_+ (\Lambda)$, resp.  $\cC (\Lambda)$, whose restriction to every
fiber of $f$ coincides with the integral of $\varphi$ restricted to that fiber.

 \subsection{The transfer principle}\label{tp}
We are now in the position of explaining how motivic integrals
specialize to $p$-adic integrals and may be used  to obtain  a general transfer principle
allowing to transfer relations between integrals from $\bQ_p$ to $\bF_p ((t))$ and vice-versa.

We shall assume from now on that
$k$ is a number field
with ring of integers $\cO$.
We denote by
$\cA_{\cO}$ the set of  $p$-adic completions
of  {all finite extensions} of  $k$ and by
$\cB_{\cO}$ the set of all local fields of characteristic  $>0$ which are  $\cO$-algebras.

For  $K$ in  $\cC_{\cO}:=\cA_{\cO}\cup \cB_{\cO}$, we denote by
\begin{itemize}
\item $R_K$ the valuation ring
\item $M_K$
the maximal ideal
\item $k_K$  the residue field
\item $q(K)$ the cardinal of  $k_K$
\item $\varpi_K$ a uniformizing parameter of $R_K$.
\end{itemize}
\medskip

There exists a unique morphism  $\ac:K^\times \to
k_K^\times$ extending  $R_K^\times\to
k_K^\times$ and sending $\varpi_K$ to $1$. We set
$\ac(0)=0$.
For  $N>0$, we denote by  $\cA_{\cO, N}$ the set of fields  $K$ in $\cA_{\cO}$ such that
$k_K$ has characteristic  $> N$, and similarly for  $\cB_{\cO, N}$ and  $\cC_{\cO, N}$.
To be able to interpret our formulas to fields in
$\cC_\cO$, we {restrict} the language $\cL_{\text{DP}}$ to the sub-language
$\LO$
for which coefficients in the valued field sort are assumed to belong to the subring
$\cO [[t]]$ of $k ((t))$.
We denote by  $\Def(\LO)$ the sub-category of $\Def_k$
of objects definable in
$\LO$, and similarly for functions, etc.
For instance, for
$S$ in $\Def(\LO)$, we denote by  $\cC (S, \LO)$ the ring of constructible functions on $S$
definable in $\LO$.

We consider $K$ as a  $\cO [[ t]]$-algebra via
\begin{equation}
\lambda_{\cO,K}:
\sum_{i\in\bN}a_it^i \longmapsto
\sum_{i\in\bN}a_i\varpi_K^i.\end{equation}
Hence, if  we interpret
$a$ in $\cO [[t]] $ by
$\lambda_{\cO,K}(a)$, every  $\LO$-formula $\varphi$
defines for  $K$ in $\cC_\cO$ a subset  $\varphi_K$ of some
$K^m \times k_K^n \times \bZ^r$.
One proves that if two $\LO$-formulas  $\varphi$ and
$\varphi'$ define the same subassignment  $X$ of
$h[m,n,r]$, then  $\varphi_K=\varphi'_{K}$ for
$K$ in  $\cC_{\cO,N}$ when $N \gg 0$.
This allows us to denote by $X_K$ the subset defined by $\varphi_K$,
for
$K$ in  $\cC_{\cO,N}$ when $N \gg 0$.
Similarly, every $\LO$-definable morphism
 $f:X\to Y$ specializes to $f_K:X_K\to Y_K$  for
$K$ in  $\cC_{\cO,N}$ when $N \gg 0$.

{We now explain how $\varphi$ in
$\cC(X,\LO)$ can be specialized to $\varphi_K : X_{K}\to \bQ$
for
$K$ in  $\cC_{\cO,N}$ when $N \gg 0$.}
Let us consider
$\varphi$ in  $K_0(\RDef_X (\LO))$ of the form  $[\pi:W\to X]$ with   $W$ in
$\RDef_X(\LO)$. For
$K$ in  $\cC_{\cO,N}$ with  $N \gg 0$, we have
$\pi_K:W_K\to X_K$, so we may define
 $\varphi_K:X_K\to\bQ$ by
 \begin{equation}x \longmapsto \text{card} \left(\pi_K^{-1}(x)\right).\end{equation}
 For  $\varphi$ in  $\cP(X)$, we specialize  $\bL$ into
$q_K$ and $\alpha:X\to\bZ$ into $\alpha_K:X_K\to\bZ$.
By tensor product we get
$\varphi \mapsto \varphi_K$ for
$\varphi$ in $\cC(X,\LO)$.
Note that, under that construction, functions in $\cC_+(X,\LO)$ specialize
into non negative functions.

Let  $K$ be in   $\cC_\cO$ and  $A$ be a subset of  $K^m\times
k_K^n \times \bZ^r$. We consider  the  Zariski closure
$\bar A$ of the  projection of $A$ into $\bA_K^m$. One defines a measure
$\mu$ on
$A$ by restriction of the product of the canonical
(Serre-Oesterl\'e)
measure on $\bar A (K)$
with the counting measure on $k_K^n \times \bZ^r$.

\medskip

Fix a morphism
$f : X \rightarrow \Lambda$ in $\Def (\LO)$
and consider
$\varphi$ in
$\cC (X, \LO)$.
One can show that if $\varphi$ is relatively integrable,
then,
for $N \gg 0$, for every $K$ in $\cC_{\cO,N}$, and
{for every}  $\lambda$ in $\Lambda_K$, 
the restriction $\varphi_{K,  \lambda}$ of  $\varphi_{K}$ to
$f_K^{-1}(\lambda)$ is
 integrable.

We denote by
$\mu_{\Lambda_K}(\varphi_{K})$ the function on  $\Lambda_K$ defined by \begin{equation}\lambda \longmapsto \mu (\varphi_{K, \lambda}).\end{equation}

The following theorem says that motivic  integrals
specialize to the corresponding integrals over local fields
of high enough residue field characteristic.

\medskip
\begin{thm}[Specialization, Cluckers-Loeser \cite{crexp}
\cite{clexp}]\label{specializ}
 Let $f : S \rightarrow \Lambda$ be a
morphism in $\Def (\LO)$. Let $\varphi$ be in $\cC (S, \LO)$ and
relatively integrable with respect to $f$. For $N \gg 0$, for every
$K$ in
$\cC_{\cO,N}$, 
we have
\begin{equation}
\left(\mu_\Lambda(\varphi)\right)_{K}
=
\mu_{\Lambda_K}(\varphi_{K}).\end{equation}
\end{thm}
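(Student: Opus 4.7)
The plan is to reduce to the absolute case $\Lambda = h[0,0,0]$ and then to verify that specialization is compatible with each step in the inductive construction of motivic integration characterized by Theorem \ref{mt}. Relative motivic integration is defined fiberwise (the defining property of $\mu_\Lambda(\varphi)$ from \eqref{relativemu} is that its restriction to every fiber of $f$ is the integral of $\varphi$ on that fiber). Since the specialization $(\cdot)_K$ is itself compatible with taking fibers of definable morphisms, once one knows the identity $\mu(\varphi)_K = \mu(\varphi_K)$ for absolute integrals (over $h[0,0,0]$) uniformly in $K$ and in the data defining $\varphi$, the relative statement follows by applying the absolute one to each fiber $f_K^{-1}(\lambda)$ of $\lambda \in \Lambda_K$.

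For the absolute case, I would argue by induction on the triple $(m,n,r)$ of variables (with the dominant induction being on $m$, the number of valued field variables), following the recursive construction of motivic integrals dictated by the axioms (A1)--(A8). The base clauses correspond to integration along residue field variables and along $\bZ$-variables. For the residue field part, axiom (A5) reduces everything to the pushforward on $K_0(\RDef_X, \LO)$; and by construction the specialization of a class $[W \to X]$ at $K$ is the counting function $x \mapsto \#\pi_K^{-1}(x)$, which is exactly fiber-wise counting against the counting measure on $k_K^n$. For $\bZ$-variables, axiom (A6) is tailored so that for $N$ large enough $\vartheta_{q(K)}$ applied to the motivic sum equals the summation of the specialized function, which is the $p$-adic integral against counting measure on $\bZ^r$. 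The inductive step on valued field variables uses the Denef--Pas Cell Decomposition Theorem \ref{cellth}: one writes the integrand, up to adding residue-field and value-group parameters, as a disjoint sum of integrands supported on $0$-cells and $1$-cells. On $0$-cells, axiom (A8) gives $f_!(\mathbf{1}_Z) = \bL^{(\ordjac f)\circ f^{-1}}$; this specializes to $q(K)^{\ord \jac(f_K)\circ f_K^{-1}}$, matching the $p$-adic change-of-variables formula for the graph embedding. On $1$-cells, axiom (A7) gives $\bL^{-\alpha - 1}$, whose specialization $q(K)^{-\alpha - 1}$ is precisely the Haar measure of a ball $\{z \in K \mid \ord(z-c)=\alpha,\ \ac(z-c)=\xi\}$ for $K \in \cC_{\cO,N}$.

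The main obstacle is uniformity in $K$ across the two families $\cA_\cO$ (mixed characteristic) and $\cB_\cO$ (equal characteristic), simultaneously with uniformity in the finitely many definable ingredients that enter the inductive unwinding of $\varphi$. Concretely, one must show that there is a single $N$ (depending only on $\varphi$ and $f$, not on $K$) such that the cell decomposition used to compute $\mu(\varphi)$ specializes for every $K \in \cC_{\cO,N}$ to an honest partition of $S_K$ into $p$-adic cells whose individual volumes are computed by the above formulas. This rests on the Ax--Kochen--Er\v{s}ov type fact that $\LO$-formulas have, for $N \gg 0$, the same truth value on the henselian valued fields in $\cC_{\cO,N}$ as on $k((t))$, together with the fact that the finitely many definable functions appearing in a given cell decomposition -- the center $c$, the angular component $\xi$, the valuation datum $\alpha$, and the Jacobian orders $\ordjac$ -- are preserved by specialization once one is above the (bounded) residue characteristic where their defining formulas become coherent. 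Relative integrability itself is an $\LO$-definable condition on the $\bZ$-variable summations, so it too transfers uniformly for $N$ large enough, ensuring that $\varphi_{K,\lambda}$ is $p$-adically integrable and that Fubini (axiom (A1b)) on the motivic side matches Fubini on the $p$-adic side.
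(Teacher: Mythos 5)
The paper does not prove Theorem \ref{specializ}; it is cited from Cluckers--Loeser \cite{crexp}\cite{clexp} and used as a black box, so there is no internal proof for your attempt to be compared with. On its own merits your outline captures the right shape of the argument: match each axiom (A5)--(A8) of Theorem \ref{mt} with its $p$-adic counterpart (counting on $k_K^n$, summation on $\bZ^r$, Haar volume of balls, and the $p$-adic change-of-variables for graph embeddings), run the induction on the number of valued-field variables through the Denef--Pas cell decomposition, and invoke Ax--Kochen--Er\v{s}ov-type uniformity for $N\gg 0$.

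One step you should tighten is the reduction ``to the absolute case $\Lambda=h[0,0,0]$ and then apply it fiberwise.'' As stated this is circular or at least incomplete: the absolute statement gives, for each fixed fiber, a threshold $N$ beyond which the identity holds, but nothing guarantees a single $N$ that works simultaneously for all $\lambda\in\Lambda_K$ and all $K$. The correct route is not to reduce to fibers but to run the whole induction \emph{relatively over $\Lambda$}: the cell decomposition, the centers $c$, the functions $\alpha,\xi$, and the Jacobian orders are all definable morphisms over $\Lambda$, so a single choice of $N$ (depending only on the finitely many $\LO$-formulas involved) makes them specialize coherently for every $K\in\cC_{\cO,N}$ and every $\lambda\in\Lambda_K$ at once. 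In other words, uniformity over $\Lambda$ is not an afterthought to be patched in; it is the reason one constructs $\mu_\Lambda(\varphi)$ as a constructible function on $\Lambda$ rather than fiber by fiber. With that adjustment your sketch is consistent with the approach of the cited papers.
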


We are now ready to state the following abstract transfer principle:

\begin{thm} [Abstract transfer principle, Cluckers-Loeser \cite{crexp} \cite{clexp}]\label{abstract}Let $\varphi$ be in
$\cC (\Lambda, \LO)$. There exists   $N$ such that
for every $K_1$, $K_2$ in  $\cC_{\cO,N}$ with  $k_{K_1}\simeq
k_{K_2}$,
\begin{equation}\varphi_{K_1} = 0
\quad
\text{if and only if}
\quad
\varphi_{K_2} = 0.
\end{equation}
\end{thm}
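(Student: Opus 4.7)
The plan is to reformulate the pointwise vanishing of $\varphi_K$ as the satisfaction of a single first-order $\LO$-sentence $\sigma_\varphi$, depending on $\varphi$ but not on $K$, and then to invoke the Ax-Kochen-Er\v{s}ov (AKE) principle for the Denef-Pas language $\LO$---a standard consequence of Denef-Pas quantifier elimination in $\LO$---which asserts that for every $\LO$-sentence $\sigma$ there exists $N(\sigma)$ such that $K_1\models\sigma\iff K_2\models\sigma$ whenever $K_1,K_2\in\cC_{\cO,N(\sigma)}$ satisfy $k_{K_1}\simeq k_{K_2}$. Applied to $\sigma_\varphi$, AKE gives the theorem.

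To produce $\sigma_\varphi$, I would first use the construction $\cC(\Lambda,\LO)=K_0(\RDef_\Lambda(\LO))\otimes_{\cP^0(\Lambda)}\cP(\Lambda,\LO)$ to present $\varphi$ as a finite sum $\varphi=\sum_i[\pi_i\colon W_i\to\Lambda]\otimes\alpha_i$ with $W_i\in\RDef_\Lambda(\LO)$ and $\alpha_i\in\cP(\Lambda,\LO)$. Specialization at $\lambda\in\Lambda_K$ then reads
\[
\varphi_K(\lambda)\;=\;\sum_i \#\bigl(W_{i,K,\lambda}\bigr)\,\alpha_{i,K}(\lambda),
\]
where each $\alpha_{i,K}(\lambda)$ is a polynomial with integer coefficients in $q_K$ and in integer values $\beta_{ij,K}(\lambda)$ of definable morphisms $\beta_{ij}\colon\Lambda\to\bZ$, while $\#(W_{i,K,\lambda})$ is the cardinality of the residue-field fibre of $\pi_i$ over $\lambda$.

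The heart of the argument is then to exhibit an $\LO$-definable subassignment $Z\subseteq\Lambda$ with $Z_K=\{\lambda\in\Lambda_K:\varphi_K(\lambda)=0\}$ for $N$ sufficiently large. For this I would apply the Denef-Pas cell decomposition (Theorem \ref{cellth}) together with Presburger elimination in the value-group sort to refine $\Lambda$ into finitely many $\LO$-definable pieces on which the $\beta_{ij,K}(\lambda)$ become piecewise linear functions of fixed $\bZ$-coordinates and the residue-field formulas cutting out the fibres $W_{i,K,\lambda}$ depend only on a bounded amount of residue-field data. On each such piece the equation $\varphi_K(\lambda)=0$ becomes a polynomial identity in $q_K$ with $\LO$-definable coefficients; for residue characteristic large, such an identity is equivalent to a finite $\LO$-definable condition on $\lambda$, and gluing across pieces produces $Z$. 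The sentence $\sigma_\varphi:=\forall\lambda\,(\lambda\in\Lambda\Rightarrow\lambda\in Z)$ then formalises $\varphi_K\equiv 0$, and AKE concludes.

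The main obstacle is precisely this definability step: the counts $\#(W_{i,K,\lambda})$ are not polynomial in $q_K$ in general, yet must still be encoded into a uniform $\LO$-condition. The resolution is that, by Denef-Pas quantifier elimination, these counts factor through finitely many residue-field-definable invariants of controlled complexity, so that on each cell of a sufficiently fine decomposition the vanishing condition $\varphi_K(\lambda)=0$ becomes first-order $\LO$-definable, uniformly in $K\in\cC_{\cO,N}$ of sufficiently large residue characteristic.
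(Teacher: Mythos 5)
The paper itself does not prove Theorem~\ref{abstract}; it merely cites Cluckers--Loeser \cite{crexp}\cite{clexp}. So the comparison is really between your argument and the proof in those references.

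Your proposal has a genuine gap at the step you yourself single out as ``the heart of the argument'': the claim that there is an $\LO$-definable subassignment $Z\subseteq\Lambda$ with $Z_K=\{\lambda\in\Lambda_K:\varphi_K(\lambda)=0\}$, and hence a single $\LO$-sentence $\sigma_\varphi$ expressing $\varphi_K\equiv 0$. This is false in general. The obstruction is precisely the terms $\#(W_{i,K,\lambda})$: specializing a Grothendieck-ring class yields cardinalities of definable families of residue-field sets, and an equality between two such cardinalities is not a first-order condition. For instance, take $\Lambda=h[0,1,0]$, let $W_1\subset\Lambda\times h[0,2,0]$ be a family of plane curves whose point count over $k_K$ varies with the Frobenius trace of the fibre, let $W_2=\Lambda\times h[0,1,0]$ (so $\#W_{2,K,\lambda}=q_K$), and set $\varphi=[W_1]-[W_2]$. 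Then $\varphi_K(\lambda)=0$ is a trace-of-Frobenius condition on the fibre over $\lambda$, which is not cut out by any $\LO$-formula uniformly over residue fields, and no amount of cell refinement makes it so. Your final paragraph asserts that Denef--Pas quantifier elimination renders the vanishing ``first-order $\LO$-definable on each cell,'' but that is exactly what fails; quantifier elimination eliminates valued-field quantifiers, it does not turn point-counting equalities in the residue field into formulas. (A smaller slip: since $\cP(\Lambda)$ is built from $\bA=\bZ[\bL,\bL^{-1},(1/(1-\bL^{-i}))_{i>0}]$, the specializations $\alpha_{i,K}(\lambda)$ are rational, not polynomial, functions of $q_K$; clearing denominators fixes this, but it should be said.)

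The actual argument in \cite{clexp} does not produce a sentence at all. After cell decomposition and quantifier elimination one shows that, for large residue characteristic, $\varphi_K(\lambda)$ factors through residue-field and value-group data attached to $\lambda$ (the tuples $\ord g_j(\lambda)$, $\ac g_j(\lambda)$, together with the residue-field and $\bZ$-coordinates of $\lambda$), with the resulting expression and the definable range of these data depending only on the residue field $k_K$ and on $q_K$. Whether $\varphi_K$ vanishes identically is therefore a statement purely about this residue-field and value-group material, and is \emph{literally the same statement} for any two local fields with isomorphic residue fields of large characteristic; no encoding into an $\LO$-sentence is needed or possible. Your instinct to invoke cell decomposition and the Denef--Pas structure theory points in the right direction, but it must be used to show that the map $K\mapsto(\varphi_K\equiv 0)$ depends only on the isomorphism class of $k_K$, not to definably describe the vanishing locus.
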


Putting together the  two previous theorems, one immediately gets:

\begin{thm} [Transfer  principle for integrals with parameters, Cluckers-Loeser \cite{crexp} \cite{clexp}]\label{tran}
Let  $S \rightarrow \Lambda$ and   $S' \rightarrow \Lambda$ be morphisms in
 $\Def (\LO)$. Let  $\varphi$ and $\varphi'$ be  relatively integrable functions in   $\cC (S, \LO)$
 and
 $\cC (S', \LO)$, respectively.
 There exists
 $N$ such that
for every $K_1$, $K_2$ in  $\cC_{\cO,N}$ with  $k_{K_1}\simeq
k_{K_2}$,
\begin{equation*}
\mu_{ \Lambda_{K_1}} (\varphi_{K_1}) 
= \mu_{
\Lambda_{K_1}} (\varphi'_{K_1})  
 \quad
\text{if and only if}
 \quad
\mu_{ \Lambda_{K_2}} (\varphi_{K_2}) 
= \mu_{
\Lambda_{K_2}} (\varphi'_{K_2}).
\end{equation*}
\end{thm}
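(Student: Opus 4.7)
The plan is to reduce the claim to a single application of the Abstract Transfer Principle (Theorem \ref{abstract}) applied to a suitable difference of relative motivic integrals. This is the composition indicated by the phrase ``putting together the two previous theorems'': the Specialization Theorem converts motivic integrals into $p$-adic integrals, and the Abstract Transfer Principle takes care of the transfer of the vanishing between local fields with isomorphic residue fields.

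First I would form the relative motivic integrals. Since $\varphi \in \cC(S, \LO)$ is relatively integrable with respect to the given morphism $S \to \Lambda$, the construction recalled at (\ref{relativemu}) produces a function $\mu_{\Lambda}(\varphi) \in \cC(\Lambda, \LO)$, and similarly $\mu_{\Lambda}(\varphi') \in \cC(\Lambda, \LO)$ from $\varphi'$. Define
$$
\psi := \mu_{\Lambda}(\varphi) - \mu_{\Lambda}(\varphi') \in \cC(\Lambda, \LO).
$$
Now apply Theorem \ref{specializ} to $\varphi$ and to $\varphi'$: there exists $N_1$ such that for every $K \in \cC_{\cO, N_1}$,
$$
\bigl(\mu_{\Lambda}(\varphi)\bigr)_K = \mu_{\Lambda_K}(\varphi_K)
\quad\text{and}\quad
\bigl(\mu_{\Lambda}(\varphi')\bigr)_K = \mu_{\Lambda_K}(\varphi'_K).
$$
Because the specialization map $\cdot_K$ on $\cC(\Lambda, \LO)$ is, by construction in \ref{tp}, a ring homomorphism (built by tensor product from the maps $K_0(\RDef_\Lambda(\LO)) \to \{\Lambda_K \to \bZ\}$ and $\cP(\Lambda) \to \{\Lambda_K \to \bQ\}$), we obtain
$$
\psi_K = \mu_{\Lambda_K}(\varphi_K) - \mu_{\Lambda_K}(\varphi'_K)
$$
as functions on $\Lambda_K$. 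Hence the equality of the two parametrized integrals over $\Lambda_K$ is equivalent to the pointwise vanishing of $\psi_K$ on $\Lambda_K$.

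Finally I would invoke the Abstract Transfer Principle, Theorem \ref{abstract}, for the function $\psi \in \cC(\Lambda, \LO)$: there exists $N_2$ such that for all $K_1, K_2 \in \cC_{\cO, N_2}$ with $k_{K_1} \simeq k_{K_2}$, one has $\psi_{K_1} = 0$ if and only if $\psi_{K_2} = 0$. Setting $N := \max(N_1, N_2)$ and chaining the two equivalences
$$
\mu_{\Lambda_{K_i}}(\varphi_{K_i}) = \mu_{\Lambda_{K_i}}(\varphi'_{K_i})
\iff \psi_{K_i} = 0
\qquad (i = 1,2)
$$
through Theorem \ref{abstract} yields the desired equivalence.

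The only technical point to verify is that specialization commutes with the subtraction used to form $\psi$; this is automatic from the fact that $\cdot_K$ is induced by a tensor product of ring homomorphisms, so there is no genuine obstacle. In that sense Theorem \ref{tran} is a direct corollary of Theorems \ref{specializ} and \ref{abstract}, and all the real content is absorbed in those two statements.
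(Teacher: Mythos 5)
Your proof is correct and is precisely the argument the paper has in mind: the text states Theorem \ref{tran} immediately after the remark ``Putting together the two previous theorems, one immediately gets,'' i.e.\ Specialization (Theorem \ref{specializ}) followed by the Abstract Transfer Principle (Theorem \ref{abstract}) applied to the difference $\mu_\Lambda(\varphi)-\mu_\Lambda(\varphi')$. The one point you single out to check --- that specialization is a ring homomorphism and hence commutes with the subtraction --- is indeed the only detail needed and is handled correctly.
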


\medskip
In the special case where $ \Lambda = h [0, 0, 0]$
and $\varphi$ and $\varphi'$ are in
  $\cC (S, \LO)$
 and
 $\cC (S', \LO)$, respectively,
 this follows from previous results of Denef-Loeser \cite{JAMS}.

\begin{remark}\label{affgen}
The previous constructions and
statements may be extended directly - with similar proofs -
to the global (non affine) setting.
\end{remark}

Note that when $S = S' = \Lambda = h [0, 0, 0]$, one recovers the classical

\begin{thm} [Ax-Kochen-Er{\v s}ov \cite{AK2} \cite{Ersov}]\label{ake}
Let $\varphi$ be a first order sentence \textup{(} that is, a formula with no free variables\textup{)} in the language of rings. For almost all prime number $p$, the sentence
$\varphi$ is true in $\mathbb{Q}_p$ if and only if it is true in
$\mathbb{F}_p ((t))$.
\end{thm}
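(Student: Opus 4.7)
The plan is to deduce the statement directly from the Abstract Transfer Principle (Theorem~\ref{abstract}), i.e.\ the special case of Theorem~\ref{tran} with $S = S' = \Lambda = h[0,0,0]$. Take $k = \bQ$ with $\cO = \bZ$, so that for every prime $p$ both $\bQ_p \in \cA_\cO$ and $\bF_p((t)) \in \cB_\cO$ lie in $\cC_\cO$, with residue fields canonically equal to $\bF_p$.

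Given a sentence $\varphi$ in the language of rings, interpret it as an $\LO$-sentence by assigning every symbol of $\varphi$ to the valued-field sort; this is legal since the only constants appearing are $0$ and $1$, which lie in $\cO[[t]]$. The resulting $\LO$-sentence defines a subassignment $h_\varphi$ of $h[0,0,0]$: for every field $K'$ containing $\bQ$, the set $h_\varphi(K')$ is the singleton if $\varphi$ holds in $K'((t))$ and empty otherwise. Let $F := \mathbf{1}_{h_\varphi} \in \cC(h[0,0,0], \LO)$, obtained as the class of the inclusion $h_\varphi \hookrightarrow h[0,0,0]$ in $K_0(\RDef_{h[0,0,0]}(\LO))$.

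Unwinding the specialization recipe of Section~\ref{tp}, for $K \in \cC_\cO$ the specialization $F_K$ is a function on the one-point set $h[0,0,0]_K$ whose unique value equals the cardinality of $(h_\varphi)_K$, namely $1$ if $\varphi$ holds in $K$ and $0$ otherwise. Since every symbol of the ring language has been placed in the valued-field sort, the interpretation of $\varphi$ in the three-sorted structure $(K, k_K, \bZ)$ agrees tautologically with its one-sorted interpretation in the ring $K$. Now apply Theorem~\ref{abstract} to $F$: there exists $N$ such that for every $K_1, K_2 \in \cC_{\cO, N}$ with $k_{K_1} \simeq k_{K_2}$ one has $F_{K_1} = 0$ iff $F_{K_2} = 0$. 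For every prime $p > N$, take $K_1 = \bQ_p$ and $K_2 = \bF_p((t))$: both lie in $\cC_{\cO, N}$ and have residue field $\bF_p$, hence $\varphi$ is true in $\bQ_p$ iff it is true in $\bF_p((t))$.

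No serious obstacle arises: the entire content of the theorem lies in Theorem~\ref{abstract}, and the rest is a routine unwrapping. The only step requiring a moment's care is the compatibility between the three-sorted and one-sorted interpretations of a ring-language formula, which is immediate once the sort of each non-logical symbol is fixed.
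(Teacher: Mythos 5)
Your proof is correct and takes essentially the same approach as the paper, which simply states Theorem~\ref{ake} as the special case $S=S'=\Lambda=h[0,0,0]$ of the transfer principle without spelling out the details; your unwrapping via Theorem~\ref{abstract} (applied to $\mathbf{1}_{h_\varphi}$, with all ring symbols assigned to the valued-field sort so that the three-sorted interpretation over $K\in\cC_{\cO}$ reduces tautologically to the one-sorted interpretation in the ring $K$) is exactly the intended specialization.
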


\subsection{}In view of Theorem  \ref{tran}, since
we are interested in the
behavior of integrals for sufficiently large
primes $p$, we shall assume throughout
this paper, whenever it is useful to do so,
that $p$ is sufficiently large.  {\it This remains a standing
assumption throughout this paper.}

\begin{remark}\label{OL[1/N]}
Let $N>0$ be an integer and let $\LO(1/N)$ be the language $\LO$
with one extra constant symbol to denote the rational number $1/N$.
Then the above statements in section \ref{sec2} remain valid if one
works with $\LO(1/N)$ instead of with $\LO$, where now the
conditions of big enough residue field characteristic mean in
particular that the residue field characteristic is bigger than $N$.
Indeed, $\LO(1/N)$ is a definitional expansion of $\LO$ in the sense
that both languages give exactly the same definable subassignments
and definable morphisms, hence they also yield the same rings and
semirings resp.~the same groups and semigroups of constructible
functions and constructible $\ccF$unctions.
\end{remark}

\subsection{Tensoring with $\bQ$, with $\bR$, or with $\bC$}\label{sec:tensor}

Since Arthur's weight function involves volumes (see below), it is
useful to work with $\ccF$unctions in $\ccC(X)\otimes \bR$ instead of
in $\ccC(X)$ and so on, for definable subassignments $X$. One can work
similarly to tensor with $\bQ$ or $\bC$. Once we have the direct
image operators and integration operators of Theorem \ref{mt}, of
(\ref{IC(Z)}), and of (\ref{relativemu}), this is easily done as
follows. Let $f:Z\to Y$ be a morphism in $\Def_S$. Clearly $\bQ$,
$\bR$ and $\bC$ are flat $\bZ$-modules (as localizations,
resp.~direct limits of $\bZ$, resp.~of flat modules). This allows us
to view $\rm{I}_S \ccC(Z)\otimes_\bZ \bR$ as a submodule of
$\ccC(Z)\otimes_\bZ \bR$, and similarly for relative integrable
functions. Naturally, $f_!$ extends to a homomorphism $f_!\otimes
\bR:\rm{I}_S \ccC(Z)\otimes \bR \to \rm{I}_S \ccC(Y)\otimes \bR$, and
similarly for relative integrable functions. Of course, the study of
semirings of constructible functions tensored with $\bR$ does not
have any additional value since $\bR$ contains $-1$. On the other
hand, $f^*\otimes \bR$ has a natural meaning as a homomorphism from
$\ccC(Y)\otimes \bR$ to $\ccC(Z)\otimes \bR$. In this setting, the
analogue of the Change Of Variables Theorem \ref{mcvf} clearly
remains true. A function $\varphi$ in $\cC(X,\LO)\otimes \bR$ can be
specialized to $\varphi_K : X_{K}\to \bR$ for $K$ in  $\cC_{\cO,N}$
when $N \gg 0$ as in section \ref{tp}, by tensoring with $\bR$,
where the values of $\varphi_K$ now lie in $\bR$ instead of in
$\bQ$. With this notation, the analogues in the $\otimes\bR$-setting
of Theorems \ref{specializ}, \ref{abstract}, and \ref{tran} and the
analogues of Remarks \ref{affgen} and \ref{OL[1/N]} naturally hold.

\section{Definability of Field Extensions}\label{sec:field}

\subsection{}
The Denef-Pas language does not allow us to work directly with field
extensions $E/F$.  However, field
extensions $E/F$ have proxies in the Denef-Pas language, which
are obtained by
working
through the minimal polynomial $m$ of $e\in E$ that generates $E/F$,
and an explicit
basis $\{1,x,\ldots,x^{r-1}\}$ of $F[x]/(m)=E$.  The minimal polynomial
  \begin{equation}
  x^r + a_{r-1} x^{r-1} + \cdots a_0
  \end{equation}
can be identified with its list of coefficients
\begin{equation}(a_{r-1},\ldots,a_0).\end{equation}

In this paper,
general field extensions will only appear as bound variables in
formulas.  Moreover, the degree of the extensions will always be fixed.
Thus, a statements  ``there exists a field extension
of degree $r$ such that $\ldots$''  can be translated  into the
Denef-Pas language as
``there exist $a_{r-1},\ldots,a_0$ such that $x^r+a_{r-1} x^{r-1} +\cdots+ a_0$
is irreducible and such that $\ldots$.''

After identifying the field extension with $F^r$, we can define
field automorphisms by linear maps on $F^r$ that respect the field
operations.  Thus, field automorphisms are definable, and the condition
that $E/F$ is a Galois extension is definable.

\subsection{}We will see that all of the constructions in this paper
can be arranged so that the only
field extensions that are ``free'' in a formula are unramified
of fixed degree $r$.  These can be described in a field-independent way
by a minimal polynomial $x^r - a$, where $a$ satisfies a Denef-Pas
condition that it is a unit such that $x^r-a$ is irreducible.
This construction introduces a parameter $a$ to whatever formulas involve
an unramified field extension.   The free parameter $a$ will be used
for instance when we describe unramified unitary groups.

If $m(x)=x^r-a$ defines a field extension $F_r$ of degree $r$ of $F$,
then we may represent a field extension $E$ of $F$
of degree $k$ containing $F_r$
by data $(b_{k-1},\ldots,b_0,\phi)$, where $b_i\in F$ are the
coefficients of a minimal polynomial $b(y)$
of an element generating
$E/F$, and $\phi\in M_{rk}(F)$ is a matrix giving the embedding
of $F^r\to F^k$, corresponding to
  \begin{equation}F_r = F[x]/(m(x))\to F[y]/(b(y)).\end{equation}

\section{Definability of Unramified Reductive Groups}
\label{sec:group}

\subsection{}The classification of split connected reductive groups is independent
of the field $F$.     Isomorphism classes of reductive
groups are in bijective correspondence with root data:
  \begin{equation}
  D=(X^*,\Phi,X_*,\Phi^\lor),
  \end{equation}
consisting of the character group of a Cartan subgroup, the set
of roots, the cocharacter group, and the set of coroots.  The
root data are considered up to an obvious equivalence.

In particular, we may realize each split
reductive group over $\ring{Q}$.  Fix once and for all, a rational
faithful representation $\rho=\rho_D$
   \begin{equation}
   \rho:G\to GL(V)
   \end{equation}
of each reductive group
over $\ring{Q}$, attached to root data $D$.  Fix a basis of $V$
over $\ring{Q}$.
There exists a formula $\phi_{\rho,D}$ in the Denef-Pas language
(in fact a formula in the language of rings) that describes the
zero set of $\rho(G)$.

An unramified reductive group over a local field $F$ is a quasi-split
connected reductive group that splits over an unramified extension of $F$.
The classification of unramified reductive groups is independent
of the field $F$.  Isomorphism classes of unramified reductive
groups are in bijective correspondence with pairs $(D,\theta)$,
where $D$ is the root data for the corresponding split group
and $\theta$ is an automorphism of finite order of $D$ that
preserves a set of simple roots in
$\Phi$.

The quasi-split group $G$ is obtained by an outer twist of the corresponding
split group $G^*$ as follows.
Suppose that $\theta$ has order $r$.  Let $F_r$ be the unramified
extension of $F$ of degree $r$.  Let $(B,T,\{X_\alpha\})$ be a splitting
of $G^*$, consisting of a Borel subgroup $B$, Cartan $T$, and root vectors
$\{X_\alpha\}$ all defined over $\ring{Q}$.   The automorphism $\theta$
of the root data determines a unique
automorphism of $G^*$ preserving the splitting.  We let $\theta$ denote
this automorphism of $G^*$ as well.  The automorphism $\theta$ acts
on root vectors by $\theta(X_\alpha)$ = $X_{\theta\alpha}$.

For any $A$-algebra $F$, we have that $G(A)$ is the set of fixed
points of $G(A\otimes F_r)$ under the map
   $\theta\circ\tau$,
where
  \begin{equation}\tau:G^*(A\otimes F_r)\to G^*(A\otimes F_r)
  \end{equation}
extends the Frobenius automorphism of $F_r/F$.  Through the
representation $\rho$ of $G^*$, the fixed-point condition can be expressed
on the corresponding matrix groups.  In particular, $G(F)$
can be explicitly realized
as the fixed points of a map $\theta\circ\tau$
on $\rho(G^*(F_r))\subset M_n(F_r)$, the set of $n$ by $n$ matrices
with coefficients in $F_r$.

\subsection{}To express the group $G$ by a formula in the Denef-Pas language,
we introduce a free parameter $a$ as described in Section~\ref{sec:field},
which is the proxy in the Denef-Pas language
for an unramified extension of degree $r$.  Under the identification
$F_r\to F^r$, the fixed point condition becomes a ring condition
on $M_n(F)\otimes F^r$.

We find
that for any pair $(D,\theta)$ (and fixed $\rho$),
there exists a formula
$\phi=\phi_{D,\theta}$ in the Denef-Pas language with $1+r^2+n^2 r$
free variables such that
for any $p$-adic field $F$
$\phi(a,\tau,g)$ is true exactly when $a$ is a unit such that
$x^r-a$ is irreducible over $F$, $F_r = F[x]/(x^r-a)$,
$\tau\in M_r(F)$ is a generator of the Galois group of the
field extension $F_r/F$ (under the identification $F_r = F^r$),
and $g\in M_n(F)\otimes F^r$ is identified with
an element $g\in G(F)\subset M_n(F_r)$ (under the identification
of $F^r$ with $F_r$ determined by $a$).

We stress that this construction differs from the usual global to
local specialization.  If we take a unitary group defined over $\ring{Q}$
with conjugation from a quadratic
extension $\ring{Q}(\sqrt{b})/\ring{Q}$, then the corresponding
$p$-adic group at completions of $\ring{Q}$ can be split or inert depending
on the prime.  By making $a$ (or $b$) a parameter to the formula,
we can insure that the formula $\phi_{D,\theta}$ defines the
unramified unitary
group {\it at every finite place}, and not merely at the inert primes.

By a similar construction, we obtain formulas in the Denef-Pas
language for unramified reductive Lie algebras.  If the group splits
over a non-trivial unramified extension, there will be corresponding
parameters, $a$ and $\tau$.

\section{Galois Cohomology}\label{sec:gal}

\subsection{}Once we have expressed field extensions and Galois groups
within the Denef-Pas language, we may do some rudimentary
Galois cohomology.

We follow various conventions when working with Galois
cohomology groups.  We never work directly with the cohomology
groups.  Rather, we represent each class in a cohomology
group $H^r(\op{Gal}(E/F),A)$ explicitly as a cocycle
\[b \in Z^r(\op{Gal}(E/F),A),\] viewed as a tuple of elements
of $A$, indexed by $\op{Gal}(E/F)$.
We express that two cocycles $b,b'$ are cohomologous by
means of an existential quantifier: there exists a coboundary
$c$ such that $b = b' c$ (as tuples indexed by the Galois group).

The module $A$ will always
be one that whose elements we can express directly in the
Denef-Pas language.  For example, $A$ may be
 a free $\ring{Z}$-module of finite rank,
a definable subgroup of $\op{GL}_n(E)$, or the coordinate
ring $\ring{Q}[M_n]$ of the space of $n\times n$ matrices.
Similarly, we restrict ourselves to
actions of $\op{Gal}(E/F)$ on modules $A$ that can be
expressed in our first-order language.

For example, if $T$ is a torus that splits over an extension
$E/F$ of $p$-adic fields, instead of the group $H^1(F,T)$,
we work with the group of cocycles $Z^1(\op{Gal}(E/F),T(E))$,
where $T(E)$ is represented explicitly as an affine algebraic group of
invertible $n$ by $n$ matrices.
Instead of $H^1(F,X^*(T))$, we work
with $Z^1(\op{Gal}(E/F),X^*(T))$, where $X^*(T)$
is viewed as a subset of the coordinate ring of $T$.  Even
more concretely, we may view $T$ as a closed subset of
$M_n$ for some $n$, and represent
elements of the coordinate ring of $T$ as polynomials in $n^2$
variables.\footnote{Since $X^*(T)$ is a free $\ring{Z}$-module of
finite rank, it might be tempting to represent $X^*(T)$ within
the value group sort of the Denef-Pas language.  This is
the wrong way to proceed!}
The cup-product of $b \in Z^1(\op{Gal}(E/F),T(E))$
with $p\in Z^1(\op{Gal}(E/F),X^*(T))$ into
$Z^2(\op{Gal}(E/F),E^\times)$ is the tuple whose coordinates
are $p_{\sigma'}(b_\sigma)$, where the pairing is obtained by evaluating
polynomials $p_{\sigma'}$ at values $b_\sigma$.

\subsection{}
The canonical isomorphism of the Brauer group
\begin{equation}
  H^2(F,\ring{G}_m) =\ring{Q}/\ring{Z}
\end{equation}
relies on the Frobenius automorphism of unramified field
extensions of $F$.  We avoid the Frobenius automorphism and
work with an explicitly chosen generator $\tau$ of
an unramified extension.

The Tate-Nakayama pairing
\begin{equation}
   H^1(F,T) \times H^1(F,X^*(T)) \to \ring{Q}/\ring{Z}
\end{equation}
can be translated into the Denef-Pas language as a collection
of predicates $$\tn_{\ell,k}(a,\tau,b,p)$$
with free variables
\[b \in Z^1(\op{Gal}(E/F),T(E)),\quad
p\in Z^1(\op{Gal}(E/F),X^*(T)),\] and a generator $\tau$
of an unramified field extensions $F_r/F$, defined by parameter $a$.  The extension
$E/F$ is assumed to be Galois, and $r$ must be sufficiently
large with respect to the degree of the extension $E/F$.
The collection of predicates are indexed by
$\ell,k\in\ring{N}$, with $k\ne0$.

The predicate $\tn_{\ell,k}(a,\tau,b,p)$ asserts that
the Tate-Nakayama cup product pairing of $b$ and $p$ is
the class $\ell/k\in\ring{Q}/\ring{Z}$ in the Brauer group.
In more detail, we construct the predicate by a rather literal
translation of the Brauer group isomorphism into the the
language of Denef and Pas.  We review Tate-Nakayama, to
make it evident that the Denef-Pas language is all that is
needed.  There is no
harm in assuming that $F_r\subset E$.
Let $c\in Z^2(\op{Gal}(E/F),E^\times)$
be the cup-product
of $b$ and $p$.  There exists a coboundary $d$ and a cocycle
$c'\in Z^2(\op{Gal}(F_r/F),F_r^\times)$ such that
   $c = c'' d$, where $c''$ is the inflation of $c'$ to
$\op{Gal}(E/F)$.  Set $A=Z^2(\op{Gal}(F_r/F),\ring{Z})$.  Let $\val(c')\in A$
be the tuple obtained by applying the valuation to each coordinate.
Associated with the short exact sequence
    \begin{equation}
    1\to \ring{Z}\to \ring{Q}
    \to \ring{Q}/\ring{Z}\to 1
    \end{equation}
          is a connecting homomorphism
          \begin{equation}\op{Hom}(\op{Gal}(F_r/F),\ring{Q}/\ring{Z}) =
          Z^1(\op{Gal}(F_r/F),\ring{Q}/\ring{Z}) \to A.\end{equation}
         Let $e(\ell,k)$ be the image in $A$
           of the homomorphism from $\op{Gal}(F_r/F)$ to
           $\ring{Q}/\ring{Z}$ that sends the chosen generator
           $\tau$ to $\ell/k$.  (The predicate $\tn_{\ell,k}$ is defined to be
           false if $k$ does not divide $r \ell$.)
Finally, the predicate $\tn_{\ell,k}$
asserts that  $e(\ell,k) = \val(c') \in A$.

\section{Weights}

\subsection{The weight function}

Weighted orbital integrals can also be brought into
the framework of constructible functions.

Let $G$ be a connected reductive group over a $p$-adic field $F$ and let
$M$ be a Levi subgroup of $G$.  Let $\cP(M)$ be the set of
parabolic subgroups $P$ of $G$ that have a Levi decomposition $P=M_P N_P$
with $M=M_P$.

Arthur
defines a real-valued
weight function $w_M(x)=w_M^G(x)$ on the group $G(F)$.
We recall the general form of this function.  The function
is defined as the value at $\lambda=0$ of a smooth function
  \begin{equation}
    \label{eqn:vM}
  v_M(x,\lambda) = \sum_{P\in \cP(M)} v_P(x,\lambda)\theta_P(\lambda)^{-1},
  \end{equation}
whose terms are indexed by $P\in \cP(M)$.

The parameter $\lambda$ lies in a finite-dimensional
real vector space $i\a_M^*$, where
$\a_M^*$ is the dual of
  \begin{equation}\a_M=\op{Hom}(X(M)_{rat},\ring{R}),
  \end{equation}
and where $X(M)_{rat}$ is the group of $F$-rational characters of $M$.
The function
   \begin{equation}\theta_P(\lambda)\in S^p[\a_{M,\ring{C}}]\end{equation}
is a nonzero homogeneous polynomial
of some degree $p$.  The degree $p$ of this form
is independent of $P\in\cP(M)$.

The factor $v_P(x,\lambda)$ is defined as follows.  We may
assume a choice of a hyperspecial maximal compact subgroup $K$
of $G(F)$ such that is admissible in the sense of Arthur
\cite[p.9]{artIF}.  Then the Iwasawa decomposition takes
the form
  \begin{equation}
  G(F) = N_P(F) M_P(F) K.
  \end{equation}
Then for $x= n m k\in G(F)$,
set $H_P(x) = H_{M}(m)$,
where $H_M(m)$ is defined by the condition
  \begin{equation}\label{eqn:HM}
  {\langle H_M(m),\chi\rangle} = -
   \val\chi(m)
  \end{equation}
for all $\chi\in X(M)_{rat}$.
The function $v_P(\lambda,x)$ is then defined to be
   \begin{equation}
   v_P(\lambda,x) = e^{-\lambda (H_P(x))}.
   \end{equation}
The function $v_M(\lambda,x)$ defined by Equation~\ref{eqn:vM}
is not obviously a smooth function of $\lambda\in i\a_M^*$, because
the individual summands $v_P(\lambda,x)/\theta_P(\lambda)$ do not
extend continuously to $\lambda=0$.
But according to a theorem of Arthur, it is smooth.

Arthur gives the function in the following alternative form.
Fix any
generic $\lambda$.
Let $t$ be a real parameter.
The denominator is homogeneous of degree $p$, so
$\theta_P(t\lambda) = t^p \theta_P(\lambda)$.
We compute the limit of
$v_M(t \lambda,x)$ as $t$ tends to zero by applying
l'H\^opital's rule $p$ times.  The result is
   \begin{equation}\label{eqn:vMlimit}
   v_M(x) = \sum_{P\in\cP}\frac{(-1)^p (\lambda (H_P(x)))^p}{p! \theta_P(\lambda)}.
   \end{equation}
The right-hand side appears to depend on $\lambda$, but in fact it is constant
as function of $\lambda$.

\subsection{Weights and constructible functions}
Let us
recall our context for unrami\-fied groups.  Associated to root
data $D$ and an automorphism $\theta$ of $D$ that preserves positive
roots, there is a formula $\phi_{D,\theta}(a,\tau,g)$ in
$1+r^2 + n^2 r$ variables where $a$ determines an unramified
field extension $F_r/F$ of degree $r$,
$\tau$ is a generator of the Galois
group of $F_r/F$, and $g$ is an element of the quasi-split group
determined by the root data $D$, automorphism
$\theta$, field
extension $F_r/F$, and $\tau$.  This is a definable
subassignment
$\tilde G=\tilde G_{D,\theta}$.

For each $p$-adic field $F$, $a$ and $\tau$, the definable
subassignment $\tilde G = \tilde G_{D,\theta}$,
gives a reductive group $G$ and a hyperspecial
maximal compact subgroup $K$ (by taking the integer points of $G$).

We pick Levi factors in standard position in the usual way.
We fix a splitting $(B,T,\{X_\alpha\})$ of the split group $G$
as in Section~\ref{sec:group}.  We may take parabolic subgroups
$P$ containing $B$ and Levi factors $M$ generated by $T$ and
a by subset of the roots vectors $\{X_\alpha\mid\ \alpha\in S\}$,
with
$S$ a subset of the set of simple roots.
For each subset $S$ of simple roots, we have
a definable subassignment $\tilde M_S \subset \tilde G$, defined by
a formula $\phi_{S,D,\theta}(a,\tau,m)$, with $a$ and $\tau$
as before, and $m$ constrained to be
an element of the Levi factor $M_{S,a,\tau}(F)$
of the reductive group $G_{a,\tau}$ attached to $D,\theta,a,\tau$.
If $S$ is the set of all simple roots, then $\tilde M_S = \tilde G$.

\begin{lemma}\label{lemma:u}
Let $(D,\theta)$ be root data and an automorphism
as above.  For each subset $S$ of the simple roots,
There is a constructible function $u_S$ on $\tilde
G_{D,\theta}$ such
that for for any $p$-adic field $F$,
   \begin{equation}u_S(a,\tau,g) = v_M(g)
   ,\end{equation}
where $M=M_{S,a,\tau}$.
\end{lemma}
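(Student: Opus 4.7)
The plan is to realize $v_M$ via Arthur's closed-form expression (\ref{eqn:vMlimit}), for a single fixed generic choice of $\lambda$ with rational coordinates, and to verify that each ingredient is definable in $\cL_{\text{DP}}$, uniformly in the parameters $(a,\tau)$. All constructions are performed so as to produce a function in $\cC(\tilde G_{D,\theta})\otimes\bQ$ in the sense of Section~\ref{sec:tensor}; the identity $u_S=v_M$ then holds pointwise in each $p$-adic specialization by Arthur's formula.

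First, I fix a $\bZ$-basis $\chi_1,\ldots,\chi_k$ of the character lattice $X(M)_{rat}$; as the $\theta$-stable part of the character lattice of the Levi $M^*$ in the split form, this is combinatorial data depending only on $(D,\theta,S)$. Each $\chi_i$ is realized as a polynomial in the matrix entries of $\rho(m)\in M_n(F_r)$ that is $\theta$-invariant and therefore descends, via the identification $F_r\simeq F^r$ determined by $a$, to an $\cL_{\text{DP}}$-definable morphism $M\to\bG_m$, yielding a definable function $\val\chi_i:\tilde M_S\to\bZ$. The finite set $\cP(M)$ is indexed by chambers in $\a_M$ in a way independent of $F$; for each $P\in\cP(M)$ the subgroups $N_P$ and $M$ are Denef-Pas definable subassignments with parameters $(a,\tau)$, exactly as in Section~\ref{sec:group}. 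I also fix, once and for all, a generic $\lambda=\sum_i q_i\chi_i\in X(M)_{rat}\otimes\bQ$ at which no $\theta_P$ vanishes; such a $\lambda$ exists since only finitely many nonzero polynomials $\theta_P$ are involved, and $\theta_P(\lambda)$ is then a nonzero rational number depending only on $(D,\theta,S)$.

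Next, I encode the Iwasawa decomposition definably. The hyperspecial maximal compact $K$ attached to the $\cO_F$-form of $G$ is a definable subassignment of $\tilde G_{D,\theta}$: under $\rho$ and the chosen integral structure, $K$ is the locus on which both $g$ and $g^{-1}$ have entries in the integers of $F_r$. The decomposition $G(F)=N_P(F)M(F)K$ is a first-order surjectivity assertion. The $M$-component $m$ of $x=nmk$ is only well-defined modulo $K\cap M(F)=M(\cO_F)$, but since every $\chi_i$ takes values in $\cO_F^\times$ on $M(\cO_F)$, the integer $-\val\chi_i(m)$ is unambiguous. Thus the map
\[
\tilde H_P:\tilde G_{D,\theta}\longrightarrow h[0,0,k],\qquad g\longmapsto\bigl(-\val\chi_1(m),\ldots,-\val\chi_k(m)\bigr),
\]
is well-defined and its graph is cut out by an existential $\cL_{\text{DP}}$-formula in $(n,m,k)$, making each $\tilde H_{P,i}$ a definable $\bZ$-valued morphism on $\tilde G_{D,\theta}$.

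Finally, I assemble the lemma. For each $P\in\cP(M)$, the quantity $\lambda(H_P(g))=\sum_i q_i\,\tilde H_{P,i}(g)$ is a rational linear combination of definable $\bZ$-valued functions; its $p$-th power is a rational polynomial in these and hence lies in $\cP(\tilde G_{D,\theta})\otimes\bQ\subset\cC(\tilde G_{D,\theta})\otimes\bQ$. Dividing by the nonzero rational $p!\,\theta_P(\lambda)$ and summing over the finite set $\cP(M)$ produces
\[
u_S\ =\ \sum_{P\in\cP(M)}\frac{(-1)^p\bigl(\sum_i q_i\,\tilde H_{P,i}\bigr)^p}{p!\,\theta_P(\lambda)}\ \in\ \cC(\tilde G_{D,\theta})\otimes\bQ,
\]
and Arthur's identity (\ref{eqn:vMlimit}) gives the pointwise equality $u_S(a,\tau,g)=v_M(g)$. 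The main obstacle is the middle step: verifying that the Iwasawa decomposition, the hyperspecial compact $K$, and the identification $K\cap M(F)=M(\cO_F)$ can all be encoded in $\cL_{\text{DP}}$ in a manner uniform in $(a,\tau)$, so that $\tilde H_P$ is genuinely a definable morphism. Once this is settled, the remainder is routine polynomial manipulation in Arthur's formula.
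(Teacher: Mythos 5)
Your proposal is correct and follows essentially the same route as the paper: fix a generic $\lambda$ once for all fields, expand $v_M$ via Arthur's limit formula (\ref{eqn:vMlimit}), observe that $\lambda(H_P(g))$ is a definable $\bZ$-valued function whose graph is cut out by an existential formula encoding the Iwasawa decomposition, and conclude by noting that $v_M$ is a polynomial with rational coefficients in these definable functions, hence constructible (in the tensored sense of Section~\ref{sec:tensor}). The only stylistic differences are that you spell out a $\bZ$-basis of $X(M)_{rat}$ and the well-definedness of the $M$-component modulo $M(\cO_F)$, points the paper leaves implicit.
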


\begin{proof} Constructible functions form a ring.  Therefore
if we express $u_S$ as a polynomial 
in functions that are known
to be constructible, then it follows that $u_S$ itself is constructible.

The character group $X(M)_{rat}$
is independent of the field $F$.
The coefficients
of the form $\theta_P$ 
are independent of the field $F$.
We may compute $v_M$ from Equation~\ref{eqn:vMlimit} with respect to any sufficiently generic $\lambda$.
In particular, we may choose $\lambda\in X(M)_{rat}$,
once for all fields $F$.    Then
$\theta_P(\lambda)$ is a non-zero 
number that does not depend on $F$.

By definition,
the function $
H_M$ on $M(F)$
is a vector of valuations of polynomial expressions
in the matrix coefficients
of $m$.
The linear form $\lambda(
H_M(m))$ is then clearly a constructible
function.

Consider the function $\lambda(
H_P(g)) = \lambda(
H_M(m))$, where $g=n m k$.
If we add the parameters, $a,\tau$,
its graph is
   \begin{equation}
   \{((a,\tau,g),\ell)\in \tilde G\times\ring{Z}
   \mid \exists \, k\in K,\ m\in M, n\in N_P, \
       g = n m k,\ \ell = \lambda(
       H_M(m))\}.
   \end{equation}
This is a definable subassignment.  Hence,
 \begin{equation}(a,\tau,g)\mapsto \lambda(
 H_P(g)),
 \quad P = P_{a,\tau}\end{equation} is a constructible
function.

As the function $v_M(g)$ is constructed as a polynomial
in $\lambda(H_P(g))$ with rational coefficients, as $P$ ranges over $\cP(M)$,
it is now clear
that it can be lifted to a constructible function $u_S(a,\tau,g)$, according to  the description of
constructible functions recalled in
 Section~\ref{2.3}.
\end{proof}

\section{Measures}\label{sec:mea}

\subsection{}Let $G$ be a split reductive group over $\ring{Q}$.
Let $B$ be
a Borel subgroup of $G$ and $T$ a Cartan subgroup contained in $B$.
Let $N$ be the unipotent radical of $B$ and Let $N'$ be the unipotent
radical of the Borel subgroup opposite to $B$ through $T$.
The big cell $T N N'$ is a Zariski open subset of $G$.  The Haar
measure of $G$ is the measure attached to the differential form,
expressed on the open cell as
   \begin{equation}\omega_G = d^*t \wedge dn \wedge dn'.\end{equation}
where $d^*t$, $dn$, $dn'$ are differential forms
of top degree on $T$, $N$, and $N'$, which are bi-invariant by
the actions of $T$, $N$, and $N'$,
respectively. The radical $N$ (and $N'$) can be identified with
affine spaces and differential forms $dn$ come from a choice
of root vectors for the algebra $N$.
   \begin{equation}
   dn = dx_1 \wedge \cdots\wedge dx_k.
   \end{equation}
We pick the root vectors to respect the rational structure
of $G$.  If we identify $T$ with a split torus $\ring{G}_m^r$, then
$d^*t$ takes the form of a multiplicatively invariant form
   \begin{equation}
   \frac{dt_1}{t_1}\wedge\cdots \wedge \frac{dt_r}{t_r}.
   \end{equation}

For any $p$-adic field, we obtain a Haar measure $|\omega_G|$ on $G(F)$
as the measure attached to the form $\omega_G$.

Similarly, for an endoscopic group $H$, there is a Haar measure
on $H(F)$ obtained from a similarly constructed differential form
$\omega_H$.  According to calculations of Langlands and Shelstad
of the Shalika germ associated with regular unipotent conjugacy
classes (\cite{LS}, \cite{Sh}),
the invariant measures on cosets $T\backslash G$
and $T\backslash H$ that are
be used for the fundamental lemma have the form
   \begin{equation}|d_Tt|\backslash |\omega_G|,\quad |d_Tt|\backslash |\omega_H|,\end{equation}
where $|d_Tt|$ is a Haar measure on $T$,
considered as a Cartan subgroup of both $G$ and $H$.


\begin{lemma}  There is a definable subassignment $\tilde G_{ell}$ of regular
semisimple elliptic elements of $\tilde G_{D,\theta}$.  More generally,
for each $S$ subset of simple roots, defining a standard
Levi subgroup, there is a definable subassignment $\tilde G_{S,\ell}$ of
regular semisimple elements of $\tilde G_{G,\theta}$ that are conjugate to
an element of $\tilde M_S$ and that are elliptic in $\tilde M_S$.
Similarly, there are definable subassignments
$\tilde \g_{S,\ell}$ of
regular semisimple elements of $\tilde g$.
\end{lemma}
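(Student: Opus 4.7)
The plan is to exhibit, in each case, an explicit $\LO$-formula whose satisfaction locus cuts out the desired subassignment, built on top of the formulas $\phi_{D,\theta}$ and $\phi_{S,D,\theta}$ of Section~\ref{sec:group}.

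\emph{Regular semisimplicity.} Via the fixed faithful representation $\rho$, the operator $\Ad(g)$ (resp.\ $\ad X$) has matrix entries that are polynomial in the entries of $g$ (resp.\ $X$) together with $a$ and $\tau$. A standard fact is that $g \in G(F)$ is regular semisimple iff the coefficient of $(t-1)^{r}$ in the characteristic polynomial $\det(tI - \Ad(g))$ is nonzero, where $r$ is the absolute rank of $G$; analogously, $X \in \g$ is regular semisimple iff the coefficient of $t^{r}$ in $\det(tI - \ad X)$ is nonzero. In each case this is the non-vanishing of a polynomial function of the parameters $(a,\tau,g)$ (resp.\ $(a,\tau,X)$), so the loci $\tilde G_{rs} \subset \tilde G_{D,\theta}$ and $\tilde \g_{rs} \subset \tilde \g_{D,\theta}$ of regular semisimple elements are definable subassignments.

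\emph{Ellipticity.} A regular semisimple $g$ is elliptic in $G$ iff it lies in no proper $F$-Levi subgroup. Since $G = G_{a,\tau}$ is quasi-split and unramified, every $F$-Levi subgroup is $G(F)$-conjugate to one of the standard Levis $M_{S'}$ with $S'$ a $\theta$-stable subset of the simple roots $\Delta$, and only finitely many such $S'$ exist. Hence $g$ fails to be elliptic iff the finite disjunction
\[
\bigvee_{\substack{S' \subsetneq \Delta \\ \theta(S') = S'}} \bigl(\exists h \in G,\; \exists m \in M_{S'} :\; h g h^{-1} = m\bigr)
\]
holds, the subassignment $\tilde M_{S'}$ being already definable via $\phi_{S',D,\theta}$ and $\tilde G$ via $\phi_{D,\theta}$. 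Negating this disjunction and intersecting with $\tilde G_{rs}$ defines $\tilde G_{ell}$.

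\emph{The sets $\tilde G_{S,\ell}$ and $\tilde \g_{S,\ell}$.} An element $g$ lies in $\tilde G_{S,\ell}$ iff $g \in \tilde G_{rs}$ and there exist $h \in G$ and $m \in M_{S}$ with $h g h^{-1} = m$ and $m$ elliptic in $M_{S}$; the last clause is the $M_{S}$-internal analogue of the preceding paragraph, obtained by disjoining over the $\theta$-stable proper subsets of the simple roots of $M_{S}$. For $\tilde \g_{S,\ell}$ the same argument applies verbatim with $\Ad$ replaced by $\ad$ and $F$-Levi subgroups by $F$-Levi subalgebras. All ingredients are first-order in the ring language, and hence in $\LO$.

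The main conceptual input is the reformulation of ellipticity through the finite list of standard Levi subgroups: once each proper $F$-rational Levi has a standard representative that is already definable by $\phi_{S',D,\theta}$, the a priori infinitary geometric condition ``not conjugate to anything inside any proper $F$-Levi'' collapses to a finite Boolean combination of existentially quantified polynomial formulas, and everything else is routine manipulation of quantifiers and polynomial non-vanishing conditions.
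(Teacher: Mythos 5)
Your proposal matches the paper's proof in substance and strategy: the paper likewise cuts out $\tilde G_{ell}$ by combining a Weyl-discriminant nonvanishing condition (for regular semisimplicity), a diagonalizability-over-a-bounded-degree-extension condition, and the condition of not being conjugate into any proper standard Levi (for ellipticity), noting all are expressible in the Denef--Pas language. You give a more explicit account of why ``not conjugate into a proper $F$-Levi'' reduces to a finite disjunction over $\theta$-stable subsets of simple roots, and you fold regularity and semisimplicity into the single discriminant condition rather than listing diagonalizability separately as the paper does; these are elaborations, not a different route.
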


\begin{proof}  For each $a,\tau$, the
constraints on $g$ are that it is not conjugate to a standard
Levi subgroup of $G_{a,\tau}$, that there is a field extension (of
some fixed degree $k$) over which $g$ can be diagonalized, and that
the Weyl discriminant is nonzero.  These conditions
are all readily expressed as a formula in the Denef-Pas language.
The proof for the other statements are similar.
\end{proof}

A conjugacy class
is said to be bounded if each element $\gamma$ in that conjugacy
class belongs to a compact subgroup of $G(F)$.  Arthur states
the weighted fundamental lemma (at the group level) in terms
of bounded conjugacy classes.  Similarly, we have the following
lemma:

\begin{lemma}  There is a definable subassignment $\tilde G_{bd}$ of bounded
semisimple conjugacy classes of $\tilde G_{D,\theta}$.
\end{lemma}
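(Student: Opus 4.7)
The plan is to exhibit an $\LO$-formula in the free variables $(a,\tau,g)$ of $\tilde G_{D,\theta}$ that holds exactly when $g$ is a semisimple element of $G_{a,\tau}(F)$ lying in some compact subgroup of $G_{a,\tau}(F)$. Both conditions are invariant under conjugation in $G_{a,\tau}(F)$, so the resulting definable subassignment $\tilde G_{bd}\subset\tilde G_{D,\theta}$ is automatically a union of conjugacy classes, giving the claimed object.

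First I would reduce everything to the matrix-group picture. Using the fixed faithful rational representation $\rho\colon G^*\to GL(V)$, I identify $g\in G(F)\subset G^*(F_r)$ with its matrix $\rho(g)\in M_n(F_r)$. Via the identification $F_r=F^r$ determined by the parameter $a$, the entries of $\rho(g)$ are $F$-linear combinations of the entries of $g$, hence terms of $\LO$ in the valued field sort. Semisimplicity of $g$ is equivalent to semisimplicity of $\rho(g)$, and is expressed by the same kind of Jordan-form/discriminant condition on $\rho(g)$ that was used in the preceding lemma to treat $\tilde G_{S,\ell}$.

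Next I would convert boundedness into an arithmetic condition on the characteristic polynomial. Since $\rho$ is a closed embedding of algebraic groups, $g$ lies in a compact subgroup of $G(F)$ if and only if $\rho(g)$ lies in a compact subgroup of $GL_n(F_r)$, and for a semisimple matrix this holds exactly when every eigenvalue of $\rho(g)$ in $\overline{F_r}$ is a unit. By a Newton polygon argument, this is equivalent to
\begin{equation*}
\ord(c_i(g))\geq 0 \text{ for all } i=1,\dots,n,\qquad\ord(c_n(g))=0,
\end{equation*}
where $c_i(g)\in F_r$ are the coefficients of $\det(tI-\rho(g))=t^n-c_1(g)t^{n-1}+\cdots+(-1)^n c_n(g)$. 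Each $c_i(g)$ is a universal polynomial in the matrix entries of $\rho(g)$; expanded in the $F$-basis $1,\omega,\dots,\omega^{r-1}$ of $F_r$ (with $\omega^r=a$), it becomes an $r$-tuple of terms in the valued field sort, and the integrality and unit conditions on $c_i(g)\in F_r$ are then straightforward conjunctions of atomic $\LO$-formulas on these $r$ components, since $F_r/F$ is unramified.

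Conjoining $\phi_{D,\theta}(a,\tau,g)$ with the semisimplicity formula for $\rho(g)$ and with the integrality and unit conditions on the coefficients of its characteristic polynomial produces the desired $\LO$-formula cutting out $\tilde G_{bd}$. The only substantive step is the standard group-theoretic input: a semisimple element of $GL_n$ over a non-archimedean local field is bounded precisely when its characteristic polynomial has integral coefficients and unit constant term. The remainder is routine Denef-Pas bookkeeping of the same flavor as the preceding lemma.
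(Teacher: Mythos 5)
The paper actually gives no proof for this lemma: it is stated immediately after the lemma on $\tilde G_{ell}$ and $\tilde G_{S,\ell}$, whose proof ends with ``The proof for the other statements are similar,'' and the boundedness lemma is left entirely to the reader. Your proposal therefore does not so much diverge from the paper as supply the argument the paper omits, and it does so correctly. The key facts you invoke are all sound: for a closed embedding $G(F)\hookrightarrow GL_n(F_r)$, $g$ lies in a compact subgroup of $G(F)$ iff $\rho(g)$ does so in $GL_n(F_r)$ (the closure of $\langle\rho(g)\rangle$ is contained in the closed subset $G(F)$); for a \emph{semisimple} matrix this is equivalent to all eigenvalues in $\overline{F_r}$ being units; and by Newton polygon considerations this is in turn equivalent to the characteristic polynomial having coefficients of non-negative valuation and constant term of valuation zero. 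The semisimplicity hypothesis is genuinely used here — a unipotent $\begin{pmatrix}1&x\\0&1\end{pmatrix}$ with $\ord(x)<0$ has integral characteristic polynomial but is not bounded — and you correctly carry semisimplicity as part of the formula, using the same diagonalizability-over-an-extension condition as the preceding lemma. Expanding each coefficient $c_i(g)\in F_r$ in the $F$-basis determined by $a$ and imposing the valuation conditions componentwise (valid because $F_r/F$ is unramified) is exactly the kind of Denef--Pas bookkeeping the paper relies on throughout. One could note that $\ord(c_n(g))=0$ already subsumes $\ord(c_n(g))\geq 0$, but that is cosmetic. Overall this is a correct and appropriately concrete completion of an argument the paper leaves implicit.
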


\subsection{}The Weyl integration formula can be used to fix normalizations
of measures.  By \cite[page 36]{waltrac}, the Weyl integration formula
takes the form
  \begin{equation}
  \begin{array}{lll}
  \int_\g f(X) dX &= \sum_{T} |W(G,T)|^{-1}\int_t |D^\g(X)|\times\\
     &\quad\int_{G/T} f (\Ad x X)dx\,dX.
  \end{array}
  \end{equation}
The measure $dX$ is the Serre-Oesterl\'e measure, which is an
additive Haar measure on $\g$.  This is compatible with the invariant
form $\omega_G$ on $G$ in the sense of \cite{cun}.
The sum runs over conjugacy classes of Cartan subgroups.  The factor
$|W(G,T)|$ is the order of the Weyl group for $T$; and $|D^\g(X)|$ is
the usual discriminant factor.  The measure $dx$ is the quotient measure
normalized by $\omega_G$ on $G$ and $dX$ on $\tx$.

Let $\cx_G = \op{spec}\,(R^G)$, where $R$ is the coordinate ring of $\g$,
and $R^G$ is the subring of $G$-invariants.
We have a morphism $\g\to \cx_G$,
coming from the inclusion $R^G\subset R$.
For $\tx\subset\g$, the morphism $\tx\to\g\to \cx_G$ is $W(G,T)$-to-$1$ on
the set of regular semisimple elements.    The fiber over a regular semisimple
element $\gamma\in\cx_G$ is the stable orbit of $\gamma$ in $\g$.  Comparing
this to the Weyl-integration formula, we see that the choice of measure on fibers
of the morphism $\g\to\cx_G$ determined by the
Serre-Oesterl\'e measures on $\g$ and on $\cx_G$
is an invariant measure $dx$.
Similar comments apply to endoscopic groups $H$ of $G$.

These observations allow us to conclude that invariant measures on stable
conjugacy classes are
compatible with the general framework of \cite{clexp}.

\section{The Langlands-Shelstad Transfer Factor}

This section presents a few facts related to the Langlands-Shelstad
transfer factor.  The Langlands-Shelstad transfer factor was
originally defined for pairs of elements in a group $G$ and a fixed
endoscopic group.  By considering the limiting behavior of this
transfer factor near the identity element of the group, we obtain
a transfer factor on the Lie algebra.

The fundamental lemma
for the Lie algebra and the Lie group are closely related.
See \cite{Hales} for the relation in the
unweighted case and the Appendix to
\cite{w-weighted} for a sketch of the relation in the full
weighted case of the fundamental lemma.

We work with the Lie algebra version of the transfer
factor.  Only small modifications would be required to work with
the transfer factors on groups.

Let $F$ be a $p$-adic field.
We fix an unramified connected reductive group $G$ over $F$ with Lie algebra
$\g$.
The $L$-group of $G$ may be written in the form
 \begin{equation}
 {}^LG = \hat G \rtimes \Gamma,
 \end{equation}
where $\Gamma$ is the Galois group of a splitting
field of $G$.  The group $\Gamma$ acts by automorphisms
of $\hat G$ that fix a splitting $(\hat T,\hat B,\{X_\alpha\})$.

\subsection{The parameter $s$ and endoscopy}

Let $(s,\rho)$
be an unramified endoscopic datum
for $G$.  The element $s$ belongs to $\hat G/Z(\hat G)$.
Taking its preimage in $\hat G$ and replacing the endoscopic
datum by another isomorphic to it,
we may take
$s\in \hat T$ to be a semisimple element,
whose connected centralizer is defined
to be $\hat H$.  We may assume that $s$ has finite order.

By definition
$\rho:\op{Gal}(F_r/F)\to \op{Out}(\hat H)$
is a homomorphism for some unramified extension
$F_r/F$ into the group of outer automorphisms
of $H$.  See \cite[sec.7]{kottwitz2} for a review of
endoscopic data.  The expository paper \cite{Ha-SFL}
gives some additional
details in the context of the fundamental lemma.

Let $T$ be a maximally split Cartan subgroup
of $H$ and $\hat T$ its dual, which we identify with the Cartan
subgroup containing $s$.
We may assume further that $s\in \hat T^{\op{Gal}(F_r/F)}$.
Let $\lie{t}$ be the Lie
algebra of $\hat T$.
The exponential short exact sequence
   \begin{equation}
   1 \to X_*(\hat T) \to \lie{t} \to \hat T \to 1,
   \end{equation}
where $\lie{t} \to \hat T$ is the exponential map $\lambda\mapsto \exp(2\pi i \lambda)$,
gives a connecting homomorphism
   \begin{equation}
   \hat T^{\op{Gal}(F_r/F)} \to H^1(F,X^*(T)).
   \end{equation}
The image of $s$ is obtained explicitly as follows.  Write
$s = \exp(2\pi i \lambda/k)$ for some $\lambda\in X_*(T)$ and $k>0$.
Then the cocycle $\mu_\sigma\in Z^1(F,X_*(\hat T))$ is given by
the equation
   \begin{equation}
   k \mu_\sigma = (\sigma(\lambda)-\lambda),\quad
   \sigma\in\op{Gal}(F_r/F).
   \end{equation}
The action of $\op{Gal}(F_r/F)$ on $X_*(T)$ comes through the
action of $\Wth$ on $X_*(T)$
by means of a homomorphism
    \begin{equation}
    \op{Gal}(F_r/F)\to \Wth
    \end{equation}
(where $W_H$ is the Weyl group of $H$).
Instead of the cocycle $\mu_\sigma$, we prefer to work with the
corresponding cocycle
\begin{equation}\mu_w\in Z^1(\Wth,X_*(\hat T))\end{equation}
\begin{equation}
\label{eqn:mu}
 k \mu_w = (w(\lambda)-\lambda),\quad
   w\in\Wth.
   \end{equation}

From the point of view of definability in the Denef-Pas language,
a complex parameter $s$ is problematical.  However, the parameter
$\lambda\in X^*(T)$ presents no difficulties, so we discard $s$
and work directly with $\lambda$ and the fixed natural number $k$.
We consider $X^*(T)$ as
within the coordinate ring of $T$.
Through an explicit
matrix representation of $H$, for any
$\mu\in X^*(T)$, we may choose
a polynomial $P_\mu$ representing $\mu$
in the coordinate ring
of $n\times n$ matrices.  The polynomials
satisfy $P_\mu P_{\mu'} = P_{\mu+\mu'} \ \mod I$,
where $I$ is the ideal
of $T$.

Equation~\ref{eqn:mu} can be rewritten as
a collection of polynomial identities
\begin{equation}\label{eqn:poly}
  P_{w}^k = P_{w\lambda}P_{-\lambda}
  \mod I,\quad
 w\in\Wth,
\end{equation}
where $P_w = P_{\mu_w}$.
The collection of polynomials $\{P_w\}$ then
serve as the proxy for the complex parameter $s$.

\subsection{The normalization of transfer factors}

Langlands and Shelstad defined the transfer factor, up
to a scalar factor.
There is a unique choice of scalar factor for the transfer factor
that is compatible with the fundamental lemma.
Following Kottwitz,
this normalization of the transfer factor is based on
a Kostant section associated to a regular nilpotent element.
Let $\g$ be the Lie algebra
of a split reductive group $G$.
Let $\cx_G$ be the space defined in Section~\ref{sec:mea}.
There is a natural morphism $\g\to \cx_G$.  Kostant defines a
section $\cx_G\to\g$ of this morphism \cite{Ko}.  The construction
of this section
depends on a choice $X$ of regular nilpotent element.
Kostant's construction, which is based on $\lie{sl}_2$ triples,
can easily be carried out in the context of the Denef-Pas language.
In fact, once the element $X$ is given, the construction requires
only the elementary theory of rings.

We fix a regular nilpotent element as follows.
Fix a splitting $(\b,\tx,\{X_\alpha\})$ of $\g$ defined over $\ring{Q}$.
Pick $X\in\b$ such that
   \begin{equation}
   X = \sum x_\alpha X_\alpha,
   \end{equation}
where $x_\alpha$ is a unit for every simple root $\alpha$.

\subsection{The pairing}

The Lie algebra version of the
Langlands-Shelstad transfer factor $\Delta(\gamma_H,\gamma)$
is defined
for pairs $\gamma\in\g$ and $\gamma_H\in \cx_H$.  It has the form
$q^{m(\gamma_H,\gamma)}\Delta_0(\gamma_H,\gamma)$, where $\Delta_0$
is a root of unity, or zero.  (It is defined to be zero on
the set where $\gamma_H,\gamma$ are not matching elements.)

Normalize the transfer factor so that $\Delta_0(\gamma_H,\gamma)=1$
if  $\gamma$ lies in the
Kostant section associated to $X$.
By \cite{Hales}, this normalization is
independent of the choice of such $X$ (for sufficiently large
primes, as usual).  Thus, we may take the
quantifier over $X$ to be either an existential
or a universal quantifier, ranging over all such
nilpotent elements.

Let
$\tilde \cx_H\times\tilde\g_{D,\theta}$ be the definable subassignment with
free variables $(a,\tau,\gamma_H,\gamma)$ with defining condition
that $\gamma$ belongs to the Lie algebra
of $\tilde G_{D,\theta,a,\tau}$ and $\gamma_H$ belongs to the
quotient $\cx_H$ for the corresponding Lie algebra of its endoscopic group $H_{a,\tau}$.

\begin{lemma}
There is a constructible function on $\tilde \cx_H\times\tilde\g_{D,\theta}$
that specializes to the function
\begin{equation}
  q^{m(\gamma_H,\gamma)}.
\end{equation}
\end{lemma}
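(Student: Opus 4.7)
The plan is to exhibit $m(\gamma_H,\gamma)$ as a $\bZ$-valued $\LO$-definable function on the matching locus of $\tilde\cx_H\times\tilde\g_{D,\theta}$; the function $\bL^{m}$ of the construction recalled in Section~\ref{2.3} will then specialize, under $\bL\mapsto q_K$, to the required $q^{m(\gamma_H,\gamma)}$, and extending by zero off the matching locus yields a constructible function on all of $\tilde\cx_H\times\tilde\g_{D,\theta}$.

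First I would locate where the exponent $m$ comes from. Using the standard decomposition
$$\Delta=\Delta_I\Delta_{II}\Delta_{III_1}\Delta_{III_2}\Delta_{IV}$$
of the Langlands-Shelstad transfer factor, each of $\Delta_I,\Delta_{II},\Delta_{III_1},\Delta_{III_2}$ is (when nonzero) a root of unity, so contributes entirely to $\Delta_0$. Only $\Delta_{IV}(\gamma_H,\gamma)=|D^G(\gamma)/D^H(\gamma_H)|^{1/2}$ has non-unit complex absolute value, giving
$$m(\gamma_H,\gamma)=-\tfrac{1}{2}\bigl(\ord D^G(\gamma)-\ord D^H(\gamma_H)\bigr),$$
where $D^G,D^H$ are the Weyl discriminants of $\g$ and $\h$.

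Next I would check definability. Through the fixed rational representation $\rho_D$ of Section~\ref{sec:group}, $D^G(\gamma)$ is a universal polynomial in the matrix coefficients of $\gamma$ (obtained from the characteristic polynomial of $\ad\gamma$); likewise $D^H(\gamma_H)$ descends, by Weyl-invariance, to a polynomial on $\cx_H$. Both polynomials are built from the root datum alone, so they yield $\LO$-terms on $\tilde\g_{D,\theta}$ and $\tilde\cx_H$ respectively. Since $\ord$ is a primitive symbol of $\LO$, each of $\ord D^G(\gamma)$ and $\ord D^H(\gamma_H)$ is an $\LO$-definable $\bZ$-valued function, as is their difference.

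The hard part will be the halving: I must verify that on the matching locus the difference $\ord D^G(\gamma)-\ord D^H(\gamma_H)$ is always \emph{even}, so that $m$ lies in $\bZ$ rather than just $\tfrac{1}{2}\bZ$. This rests on the classical fact from endoscopy that for matching pairs the ratio $D^G(\gamma)/D^H(\gamma_H)$ is, up to a unit, a square in $F^\times$: the roots of $G$ not arising from $H$ form a set closed under $\alpha\mapsto-\alpha$ and Galois-stable, and Galois-equivariant pairing exhibits the relevant product as $(-1)^N Q^2$ for some $Q\in F^\times$. Once parity is verified, $m$ becomes an $\LO$-definable $\bZ$-valued function, and $\bL^{m}$ provides the required constructible function in $\cC(\tilde\cx_H\times\tilde\g_{D,\theta},\LO)$ by the construction of Section~\ref{2.3}. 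The specialization statement of Theorem~\ref{specializ} then gives $(\bL^{m})_K=q_K^{m_K}=q^{m(\gamma_H,\gamma)}$ for $K$ in $\cC_{\cO,N}$ with $N\gg 0$.
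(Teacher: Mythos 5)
Your proof is correct and follows essentially the same route as the paper's (express $m$ as a $\bZ$-valued $\LO$-definable function and then observe $\bL^{m}$ is constructible), but you supply considerably more detail than the paper, which disposes of the lemma in one line by asserting that $m$ ``is the valuation of a polynomial in the matrix coefficients of $\gamma$ and $\gamma_H$.'' In fact your formula $m=-\tfrac12(\ord D^G(\gamma)-\ord D^H(\gamma_H))$ shows that $m$ is not literally a single valuation but half a difference of two, so the parity discussion you flag as ``the hard part'' is a real point that the paper's terse proof does not address; your treatment is the more careful one. One small caveat in your parity argument: the Galois-equivariance of the complementary-root pairing gives directly only that $Q^2\in F^\times$ (one has $\sigma(Q)=\pm Q$), not that $Q\in F^\times$ as you write; to conclude that $\ord(Q^2)$ is even you also need the quadratic extension $F(Q)/F$ to be unramified, which does hold here because the ambient discussion (unramified groups, residue characteristic $p\gg 0$, integral $\gamma$) keeps all the relevant extensions tame and unramified. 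With that adjustment your argument closes the gap the paper leaves implicit.
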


\begin{proof}  This is trivial.
The constructible function $\ring{L}$ specializes to $q$.
The ring of constructible functions
contains functions of the form $\ring{L}^m$, with
  \begin{equation}m:\tilde \h\times\tilde\g_{D,\theta}\to\ring{Z}\end{equation}
definable.  The function $m$ is constructible, because it is
the valuation of a polynomial in the matrix coefficients of $\gamma$
and $\gamma_H$.
\end{proof}

Let $\k$ be a lattice in $\g$ corresponding to a hyperspecial
maximal compact subgroup of $G$.  Let $\k_H$ be such a lattice in $H$.
Let $\gamma$ be a regular semi-simple element in $\g$.
Assume that it is the image of some $\gamma_H\in\k_H$.
Let $\gamma_0\in\g$
be an element in the Kostant section with the same
image as $\gamma$ in $\lie{c}$.  Let
$\inv(\gamma_0,\gamma)$ be the invariant attached to $\gamma$ and
$\kappa$ the character defined by the endoscopic data so that
$\Delta_0(\gamma_H,\gamma)=\langle\op{inv}(\gamma_H,\gamma),\kappa\rangle$.
The construction depends on
an element $g\in G(\bar F)$
such that $\Ad\,g(\gamma)=\gamma_0$.

\subsection{Description of the transfer factor}

We arrive at the following statements that summarize the value of
the transfer factor for elliptic unramified transfer factors.
The description we give is rather verbose.   In particular,
we write out the coboundaries explicitly rather than taking
classes in cohomology.   This is intentional
to prepare us for the proof that the transfer factor can be
expressed in the Denef-Pas language.

Recall that a natural number $k$ is used to define the complex
parameter $s$ in terms of $\lambda$.  The value of the transfer
factor is a $k^{\op{th}}$ root of unity. For $\ell\in\ring{N}$,
let $D_\ell(a,\tau,\gamma_H,\gamma_G)$
be the set of elements in $\h\times \g$ such that
$\gamma_H$ is a $G$-regular semisimple element of $\cx_H$,  $\gamma_G$ is
a regular semisimple element of $\g$,
and $\Delta_0(\gamma_H,\gamma_G)=\exp(2\pi \ell/k)$.

We fix a natural number $r$ that is large enough that the unramified
extension $F_r$ of $F$ of degree $r$ splits $G$ and $H$,  etc.

We fix a
natural number $N=N_r$ that is large enough that for every Cartan subgroup
in $G$ there exists a field extension of degree dividing $N$ containing
$F_r$ that splits
the Cartan subgroup.  As we are only interested in the tame situation,
we may assume that the residue characteristic $p$ does not divides $N$.

Let $T_H\subset G$
be the fixed Cartan subgroup of $G$, obtained by transfer of
the maximally split Cartan subgroup of $H$.
$\Delta_0(\gamma_H,\gamma)=0$ if and only if
$\gamma$ and $\gamma_H$ do not have the same image in $\cx_G$.

The set $D_\ell(a,\tau,\gamma_H,\gamma)$
is described by the following list of conditions.  The
parameters $a$ and $\tau$ are as above.

\begin{enumerate}
  \item Let $\gamma_0$ be the element in $\g$, constructed as the
    transfer of $\gamma_H$ to $\g$, lying in the Kostant section we
    have fixed.
  \item There exists a Galois field extension of degree $E/F$ of degree
     $N$ that contains a subfield isomorphic to $F_r$.
  \item There exists $g\in G(E)$ such that $\Ad\,g\, (\gamma) = \gamma_0$.
    Let $t_\sigma = g^{-1}\sigma(g)\in T_0(E)$, for $\sigma\in\Gal(E/F)$,
    where $T_0$ is the centralizer of $\gamma_0$.
  \item There exists $h\in G(E)$ such that  $\ad(h) T_0 = T_H$.  Let $t'_\sigma\in Z^1(E/F,T^*_H)$
  be the cocycle $\ad(h) t_\sigma$, with
  a twisted action of $\op{Gal}(E/F)$
  on $T_H$ obtained by transporting the action
  of $\op{Gal}(E/F)$ to $T_H$ via $\ad(h)$.
  The general properties of endoscopy imply
  that the cocycle $Z^1(F_r/F,X^*(T_H))$,
  defined by the collection of polynomials $P_w$, also yields a cocycle $\mu_\sigma\in
  Z^1(E/F,X^*(T^*_H))$ for this twisted action.
  \item The predicate
   $\tn_{\ell,k}(a,\tau,t'_*,\mu_*)$ of Section~\ref{sec:gal}
   expressing the Tate-Nakayama duality holds.
\end{enumerate}

This description of the transfer factor is equivalent
to descriptions found elsewhere.  We have changed the
presentation slightly by working with the polynomials $P_w$
rather than the complex parameter.
Beyond that, our description is essentially the standard one.  We have written the transfer
factor in this form to make it apparent that
nothing beyond first-order logic, basic ring arithmetic, and a
valuation map are required.

We have provided justification for the following statement.
It relies on the fixed natural number $k$ that is part of our
setup.

\begin{lemma}\label{lemma:D}  There is a definable subassignment
$D_\ell(a,\tau,\gamma_H,\gamma_G)$ with the following
interpretation.  The parameter $a$ defines an unramified
field extension $F_r$ of degree $r$, and $\tau$ is a generator
of $\op{Gal}(F_r/F)$.  The parameter $\gamma_H$ ranges over
$G$-regular elements in $\lie{c}_H = \h^H$
of the Lie algebra $\h$ of the endoscopic
group $H_{a,\tau}$.  The parameter $\gamma_G$ is a regular
semisimple element in the
Lie algebra $\g$ of the reductive group $G_{a,\tau}$.
The elements $\gamma_H$ and $\gamma_G$ are matching elements,
and the transfer factor takes the value
  $\Delta_0(\gamma_H,\gamma_G) = \exp(2\pi i\ell/k)$.
\end{lemma}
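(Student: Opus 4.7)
The plan is to verify that each of the five clauses in the description of $D_\ell(a,\tau,\gamma_H,\gamma_G)$ preceding the lemma is expressible in the Denef-Pas language $\LO$ (possibly with the rational constant $1/N$ adjoined, cf.\ Remark~\ref{OL[1/N]}), and to assemble them into one formula. Since definable subassignments are closed under finite conjunctions, disjunctions, negations, and existential quantification over any of the three sorts, the result will follow once each individual clause is shown to be definable.

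First I would treat the parameters $(a,\tau,\gamma_H,\gamma_G)$ themselves: by Sections~\ref{sec:field} and~\ref{sec:group}, $a$ encodes an unramified degree-$r$ extension $F_r$, $\tau$ is a generator of $\Gal(F_r/F)$, and $\gamma_G$ lives in the Lie algebra of $G_{a,\tau}$, all of which are definable. The invariant quotient $\cx_H$ is a morphism in the category of definable subassignments. Clause (1) introduces $\gamma_0$: the Kostant section depends on a choice of regular nilpotent $X=\sum x_\alpha X_\alpha$, and by the remark in the normalization subsection we may use an existential quantifier over the units $x_\alpha$; the $\lie{sl}_2$-triple construction of the section is purely ring-theoretic and hence definable, so the assertion ``$\gamma_0$ is the Kostant-section lift of $\gamma_H$'' is an $\LO$-formula.

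Next, clauses (2)--(4) are handled by the machinery of Sections~\ref{sec:field} and~\ref{sec:gal}. A degree-$N$ Galois extension $E/F$ containing $F_r$ is encoded by a tuple $(b_{k-1},\dots,b_0,\phi)$ of Denef-Pas data, over which we quantify existentially. The element $g\in G(E)$ satisfying $\Ad g(\gamma)=\gamma_0$ is an existential over $E^{n^2}$ constrained by the polynomial equations defining $G$ and the conjugation equation. The cocycle $t_\sigma=g^{-1}\sigma(g)$ is then a definable function of $g$ and the Galois action, as is the transported cocycle $t'_\sigma=\Ad(h)t_\sigma$ in $Z^1(E/F,T_H)$ after introducing an existential over $h\in G(E)$ with $\Ad(h)T_0=T_H$; the twisted action on $T_H$ and the inflated cocycle $\mu_\sigma\in Z^1(E/F,X^*(T_H))$ coming from the polynomial data $\{P_w\}$ satisfying Equation~\ref{eqn:poly} are both given by explicit polynomial conditions. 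Clause (5) is the Tate-Nakayama predicate $\tn_{\ell,k}(a,\tau,t'_*,\mu_*)$, which was constructed as an $\LO$-formula in Section~\ref{sec:gal}.

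Putting these together, $D_\ell$ is defined by the conjunction of the above clauses, existentially quantified over the choices of $X$, $E$, $g$, $h$, and the auxiliary cohomological data. The main technical obstacle is verifying that the standard construction of the transfer factor really does go through with bounded degree data: we need that $r$ and $N$ can be fixed uniformly so that every relevant Cartan splits over a subfield of $F_r\subset E\subset\bar F$ of degree dividing $N$, and that the passage from the true complex parameter $s=\exp(2\pi i\lambda/k)$ to the polynomial proxy $\{P_w\}$ faithfully represents the cup-product into the Brauer group. Granting those uniformities (which are ensured by the tame hypothesis and the standing assumption that $p\gg 0$), the match between $\Delta_0(\gamma_H,\gamma_G)$ and $\exp(2\pi i\ell/k)$ reduces to the single Tate-Nakayama predicate $\tn_{\ell,k}$, completing the definition of $D_\ell$ as a definable subassignment.
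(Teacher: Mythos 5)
Your proposal is correct and takes the same approach as the paper: the paper's proof is a one-line remark that the lemma follows from the preceding five-part description of $D_\ell$, and you have simply spelled out why each of those clauses (Kostant section, auxiliary Galois extension, cocycles $t_\sigma$, $t'_\sigma$, $\mu_\sigma$, and the predicate $\tn_{\ell,k}$) is expressible in $\LO$ via ring operations and quantifiers over the three sorts. Your explicit caveats about the uniform bounds on $r$, $N$ and the polynomial proxy $\{P_w\}$ for $s$ are appropriate and are indeed what the standing tameness and $p\gg 0$ assumptions in the paper are for.
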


We remark that the definition of $D_\ell$
can be described in a smaller language with
two sorts, one for the valued field and another
for the value group.  The residue field sort
and the angular component map do not enter
into the description of $D_\ell$.

Moreover,
we have only made light use of the valuation.
It is used once to fix the choice of nilpotent
element that is used to construct the Kostant
section.  The valuation is used once again
in the predicate $\tn_{\ell,k}$ when working
with the Brauer group.

\begin{proof}  This follows directly from the description
of the transfer factor in terms of ring operations and
quantifiers, as provided.
\end{proof}

\subsection{Weighted orbital integrals}

Let $G$ be an unramified reductive group with hyperspecial
maximal compact subgroup $K$.  Let $M$ be a Levi subgroup
that is in good position with respect to $K$.
Let $M'$ be an unramified elliptic endoscopic
group of $M$.
Write $\g$, $\k$, $\m$, and so forth, for the corresponding
Lie algebras.   Let $1_K$ be the normalized unit of the Hecke algebra.
The normalization appropriate for our choice of measures
is discussed in \cite{Ha-SFL}.

Let $\ell'\in \cx_{M'}$ be a $G$-regular element.  
The image $\ell_M$ of $\ell'$ in $\cx_{M}$ determines a stable conjugacy class $C_M(\ell')$
in $\m$, given as the fiber in $\m$ over $\ell_M\in\cx_{M}$.
The image $\ell_G$ of $\ell'$ in $\cx_G$ gives determines a stable
conjugacy class $C_G(\ell')$ in $\g$.    Any element $x\in C_G(\ell')$
permits a representation
   \begin{equation}
     x = \Ad\,g\,\gamma
    \label{eqn:rep}
   \end{equation}
with $g\in G(F)$ and $\gamma\in C_M(\ell')$.  The map
\begin{equation}x \mapsto (\gamma_x,g_x)=(\gamma,g)\end{equation}
is well-defined up to conjugacy in $M(F)$:
\begin{equation}
(\gamma_x,g_x)\mapsto (\Ad\,m\,\gamma_x,g_x m^{-1}).
\end{equation}
In particular, the function
\begin{equation}
x \mapsto \Delta_{M',M}(\ell',\gamma_x) v_M(g_x) 1_K(x)
\label{eqn:integrand}
\end{equation}
is well-defined, where $\Delta_{M',M}$ is the normalized transfer
factor for reductive group $M$ and endoscopic group $M'$. We have
already established the constructibility of $\Delta$ and $v_M$.  The
$\ccF$unction~\ref{eqn:integrand} involves no more than $\Delta,v_M$
and additional existential quantifiers of the valued field sort,
corresponding to the existence of the representation of
Equation~\ref{eqn:rep}.   In particular,
$\ccF$unction~\ref{eqn:integrand} is constructible.  (Here and
elsewhere, we are slightly loose in our use of the term
constructible.  The precise sense of constructibility is stated in
Lemmas~\ref{lemma:u} and~\ref{lemma:D}.)

Define the weighted orbital integral to be
\begin{equation}
J^G_{M,M'}(\ell') =
\int_{C_G(\ell')} \Delta_{M',M}(\ell',\gamma_x) v_M(g_x) 1_K(x),
\end{equation}
where the integral is with respect to the quotient of
the Serre-Oesterl\'e measures on $\m$ and $\cx_{M}$.

\section{The statement of the Fundamental Lemma}

\subsection{}In \cite[Conj.~5.1]{art}, Arthur conjectures a weighted
form of the fundamental lemma.  In this section,
we review his formulation of the conjecture.
The weighted case includes the
standard fundamental lemma of Langlands and Shelstad
as a special case.
(For the formulation
of the fundamental lemma in the twisted weighted context,
see Arthur's appendix to \cite{Wh}.)

We continue
to work with the Lie algebras of endoscopic groups rather
than the endoscopic groups themselves, although this makes
very little difference for our purposes.

Our work is now almost complete.  We have already
described all of the constituents of the fundamental lemma.

Let $G$ be an unramified reductive group with hyperspecial
maximal compact subgroup $K$.  Let $M$ be a Levi subgroup
that is in good position with respect to $K$.
Let $M'$ be an unramified elliptic endoscopic
group of $M$.
Write $\g$, $\k$, $\m$, and so forth, for the corresponding
Lie algebras.

To state the weighted fundamental lemma, we need
the following additional data.
For each $G,M,M'$, there is a set of endoscopic
data ${\mathcal E}_{M'}(G)$, defined in \cite[Sec.~4]{art98}.
The set ${\mathcal E}_{M'}$ is defined by data in the dual group that
is independent of the $p$-adic field.  (The action of the Galois group
$\Gamma$ on the dual group data can be replaced with the action of
the automorphism $\theta$.)
For each, $G'\in {\mathcal E}_{M'}(G)$, there is a rational number
number $\iota_{M'}(G,G')\in\ring{Q}$, that is independent of the
$p$-adic field.

We define a function $s_M^G(\ell)$ recursively.  Assume that $s_M^{G'}$
has been defined for all $G'$ (with Levi subgroup $M$) such that
$\dim\,G' < \dim\,G$.  Then, set
\begin{equation}
   s_M^G(\ell) = J_{M,M}^G(\ell) - \sum_{G'\ne G}\iota_{M}(G,G')
    s_M^{G'}(\ell).
\end{equation}
The sum runs over ${\mathcal E}_M(G)\setminus\{G\}$.  This definition
is coherent, because each group $G'\in {\mathcal E}_M(G)$ has $M$ as a Levi
subgroup, so that $s^{G'}_M$ is defined.

The conjecture of the weighted fundamental lemma
is then that for all $G,M,M'$ as above, we have
\begin{equation}
  J_{M,M'}^G(\ell') = \sum_{G'} \iota_{M'}(G,G') s_{M'}^{G'}(\ell').
\end{equation}
for all $G$-regular elements $\ell'$ in $\cx_{M'}$.
The sum on the right runs over ${\mathcal E}_{M'}(G)$.

\subsection{Constructibility}

By our preceding discussion, we see that
the integrand (Equation~\ref{eqn:integrand}) of $J_{M,M'}^G(\ell')$
comes as
specialization of a constructible function on the definable subassignment
\begin{equation}
Z = \tilde\cx_H \times_{\tilde\cx_G} \tilde\g_{D,\theta}.
\end{equation}
This constructible function depends on parameters
$a$, $\tau$, $\gamma_H\in\tilde\cx_H$, and $\gamma\in g_{D,\theta,a,\tau}$.
If we interpret this
$p$-adically, as we vary the parameter $a$ (under the
restriction that it is a unit), the situation
specializes to isomorphic groups and Lie algebras.  In particular,
the fundamental lemma holds for one specialization of $a$ if
and only if it holds for all specializations of $a$.
As we vary the generator $\tau$ of the
Galois group of the unramified field extension $F_r/F$, we
may obtain non-isomorphic data.  Different choices of $\tau$
correspond to the fundamental lemma for various
Lie algebras
 \begin{equation}
 \g_{D,\theta'}
 \end{equation}
where $\theta'$ and $\theta$ generate the same group
$\langle\theta'\rangle=\langle\theta\rangle$ of automorphisms
of the root data.  In particular, for each $\tau$, the
constructible version specializes to a version of the $p$-adic
fundamental lemma
for Lie algebras.

\subsection{The main theorem}\label{themainth}

We state the transfer principle for the fundamental
lemma as two theorems, once in the unweighted
case and again in the weighted case.  In
fact, there is no needed for us to treat
these two cases separately; they are both
a consequence of the general transfer principle
for the motivic integrals of constructible
functions given in Theorem \ref{tran}. We state them as separate theorems,
only because of preprint of Ng\^o \cite{ngo}, which
applies directly to the unweighted case of
the fundamental lemma.

\begin{thm}[Transfer Principle for the Fundamental Lemma]\label{mt1}  Let $(D,\theta)$ be given.
Suppose that the fundamental lemma holds for
all $p$-adic fields of positive characteristic
for the endoscopic groups attached to $(D,\theta')$,
as $\theta'$ ranges over automorphisms of the
root data such that $\langle \theta'\rangle=
\langle\theta\rangle$.  Then, the fundamental
lemma holds for all $p$-adic fields of characteristic zero with sufficiently large
residual characteristic $p$
(in the same context of all endoscopic
groups attached to $(D,\theta')$).
\end{thm}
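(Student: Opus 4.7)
The plan is to deduce Theorem \ref{mt1} as a direct instance of the relative transfer principle \ref{tran} (in its $\otimes\mathbb{R}$ version from Section \ref{sec:tensor}, since the coefficients $\iota_{M'}(G,G')$ are rational). The strategy is to exhibit both sides of the conjectured identity
\[
J_{M,M'}^G(\ell') \;=\; \sum_{G'\in\mathcal{E}_{M'}(G)} \iota_{M'}(G,G')\, s_{M'}^{G'}(\ell')
\]
as specializations of a single pair of relatively integrable constructible $\ccF$unctions on a definable base $\Lambda$ whose $K$-points parametrize the triples $(a,\tau,\ell')$, uniformly in $K\in\cC_{\cO,N}$. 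Then the identity becomes the vanishing of a single relatively integrable constructible function $\varphi$ on $\Lambda$, and Theorem \ref{tran} transfers this vanishing between the two collections $\cA_\cO$ and $\cB_\cO$ (for $N$ large).

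The first step is to assemble the integrand of $J_{M,M'}^G(\ell')$, namely the function (\ref{eqn:integrand}), into an element of $\cC(\tilde\g_{D,\theta},\LO)\otimes\mathbb{R}$ relatively integrable over the base $\Lambda = \tilde\cx_{M'}$, with parameters $(a,\tau)$ included. This is essentially done in the preceding sections: Lemma \ref{lemma:u} gives constructibility of $v_M$, Lemma \ref{lemma:D} gives constructibility of the normalized transfer factor $\Delta_{M',M}$, the characteristic function $1_K$ of the hyperspecial maximal compact is constructible, and the measures involved are Serre--Oesterl\'e measures (Section \ref{sec:mea}). The quotient $C_G(\ell')$ is the definable fiber of the map $\tilde\g_{D,\theta}\to\tilde\cx_G$, and the map $x\mapsto (\gamma_x,g_x)$ is defined via an existential valued-field quantifier as noted after (\ref{eqn:rep}). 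Applying the relative pushforward (\ref{relativemu}) along the projection to $\tilde\cx_{M'}$ yields a constructible $\ccF$unction on $\Lambda$ whose specializations are precisely $J_{M,M'}^G(\ell')$.

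The second step is to handle the right-hand side. The stable distributions $s_{M'}^{G'}$ are defined recursively; by downward induction on $\dim G'$ (the base being the case where $\mathcal{E}_{M'}(G')=\{G'\}$, so $s_{M'}^{G'} = J_{M',M'}^{G'}$, to which the first step applies), each $s_{M'}^{G'}$ is a $\mathbb{Q}$-linear combination of constructible $\ccF$unctions on $\Lambda$. The data $\mathcal{E}_{M'}(G)$ and the rational constants $\iota_{M'}(G,G')$ are independent of the local field, so the sum on the right assembles into a single constructible $\ccF$unction in $\cC(\Lambda,\LO)\otimes\mathbb{R}$.

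The third step is to let $\varphi$ be the difference of the two constructible $\ccF$unctions on $\Lambda$ just constructed and invoke Theorem \ref{tran} (in the $\otimes\mathbb{R}$ version, cf.\ Section \ref{sec:tensor}) with $\varphi' = 0$. The crucial point, and the only subtle one, is the role of $\tau$: as $\tau$ ranges over generators of $\op{Gal}(F_r/F)$, the group $G_{a,\tau}$ runs through the unramified forms attached to the various $\theta'$ with $\langle\theta'\rangle = \langle\theta\rangle$, because the fixed-point condition uses $\theta\circ\tau$ where $\tau$ acts as a generator of the cyclic Galois group. Since $\tau$ is one of the free variables in our formula for $\tilde G_{D,\theta}$, a single identity on $\Lambda$ simultaneously encodes the fundamental lemma for \emph{all} such $\theta'$; this is exactly why the hypothesis of Theorem \ref{mt1} quantifies over all $\theta'$ with $\langle\theta'\rangle=\langle\theta\rangle$. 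The hypothesis says $\varphi_K = 0$ for every $K\in\cB_\cO$ of sufficiently large residue characteristic, and Theorem \ref{tran} (with $K_1\in\cB_\cO$, $K_2\in\cA_\cO$ sharing a residue field) then gives $\varphi_K = 0$ for every $K\in\cA_{\cO,N}$, proving the theorem.

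The main obstacle, which is essentially the content of the preceding sections of the paper rather than of this final step, is the uniform definability of all the ingredients (field extensions, Galois cohomology, Tate--Nakayama pairing, transfer factor, Arthur weights, Kostant section) in the Denef--Pas language $\LO$, together with the bookkeeping showing that the recursive definition of $s_{M'}^{G'}$ fits within the constructible framework with the \emph{same} parameter space $\Lambda$ for every endoscopic group appearing. Once this definability and relative integrability are in hand, the transfer principle does the rest mechanically.
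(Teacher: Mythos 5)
Your proposal is correct and follows the same route as the paper: the paper's proof is a one-paragraph remark that all the constituent data (transfer factors, weights, measures, field extensions, etc.) have been shown constructible/definable in the preceding sections, after which Theorem \ref{tran} applies mechanically, and your proposal simply fills in the details of that application. Two minor notes: the paper's stated motivation for the $\otimes\mathbb{R}$ extension (Section \ref{sec:tensor}) is the weight function $v_M$ itself (which takes real values), not merely the rationality of the coefficients $\iota_{M'}(G,G')$; and for the unweighted Theorem \ref{mt1} the recursion over $\dim G'$ is degenerate ($M=G$), but your formulation correctly subsumes both cases as the paper does.
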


\begin{thm}[Transfer Principle for the weighted Fundamental Lemma]\label{mt2}
Let $(D,\theta)$ be given.
Suppose that the
weighted fundamental lemma holds for
all $p$-adic fields of positive characteristic
for the endoscopic groups attached to $(D,\theta')$,
as $\theta'$ ranges over automorphisms of the
root data such that $\langle \theta'\rangle=
\langle\theta\rangle$.  Then, the
weighted fundamental
lemma holds for all $p$-adic fields of characteristic zero with sufficiently large
residual characteristic $p$
(in the same context of all endoscopic
groups attached to $(D,\theta')$).
\end{thm}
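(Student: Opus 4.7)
The plan is to realize both sides of the conjectured identity as specializations of constructible motivic functions on a common definable parameter space, and then to invoke the transfer principle of Theorem \ref{tran} in its $\otimes\bR$ form (as explained in Section \ref{sec:tensor}).

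First I would assemble, for fixed root data $(D,\theta)$ and a choice of $(M,M')$, a definable parameter space $\Lambda=\Lambda_{D,\theta,M,M'}$ whose points are tuples $(a,\tau,\ell')$, where $a$ defines the ambient unramified extension $F_r/F$ of degree $r$, $\tau$ generates $\op{Gal}(F_r/F)$, and $\ell'$ ranges over $G$-regular elements of $\tilde\cx_{M'}$. Using the work of Sections \ref{sec:field}--\ref{sec:mea} together with the preceding section, the integrand (\ref{eqn:integrand}) of the weighted orbital integral $J^G_{M,M'}(\ell')$ is a constructible $\ccF$unction on the fibered product $Z=\tilde\cx_{M'}\times_{\tilde\cx_G}\tilde\g_{D,\theta}$, relatively integrable over $\Lambda$: the transfer factor is handled by Lemma \ref{lemma:D}, the weight function by Lemma \ref{lemma:u}, the characteristic function of the hyperspecial lattice by an explicit $\LO$-formula, and the implicit decomposition $x=\Ad\,g_x\,\gamma_x$ by an existential quantifier in the valued-field sort. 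The stable-orbit measure is the quotient of Serre--Oesterl\'e measures along $\g\to\cx_G$, which fits the motivic framework by Section~\ref{sec:mea}. Applying the relative motivic integration of (\ref{relativemu}), each $J^G_{M,M'}(\ell')$ is the specialization of a constructible function $\mathcal{J}^G_{M,M'}\in\cC(\Lambda,\LO)\otimes\bR$.

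Next I would unwind the recursion defining $s^G_M$. Since the endoscopic set $\cE_{M'}(G)$ and the constants $\iota_M(G,G')\in\bQ$ are independent of the local field and the recursion terminates on $\dim G$, induction on $\dim G$ shows that each $s^G_M(\ell)$ is a finite $\bQ$-linear combination of terms $\mathcal{J}^{G''}_{M,M}(\ell)$. Both sides of the weighted fundamental lemma are therefore the specializations of a single pair $\varphi,\varphi'\in\cC(\Lambda,\LO)\otimes\bR$. Applying the $\otimes\bR$-analogue of Theorem \ref{tran} to $(\varphi,\varphi')$ produces an integer $N$ such that for every $K_1,K_2\in\cC_{\cO,N}$ with $k_{K_1}\simeq k_{K_2}$ one has $\mu_{\Lambda_{K_1}}(\varphi_{K_1})=\mu_{\Lambda_{K_1}}(\varphi'_{K_1})$ if and only if the analogous identity holds over $K_2$. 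Given $K_1\in\cA_{\cO,N}$ with sufficiently large residual characteristic, choose $K_2\in\cB_{\cO,N}$ with the same residue field (every finite field is the residue field of some local field of positive characteristic); the hypothesis delivers the identity for $K_2$, and transfer yields it for $K_1$.

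The main obstacle, and the place where the preceding sections pay off, is the uniform relative integrability of the integrand (\ref{eqn:integrand}) in the implicitly quantified valued-field datum $g_x$; this reduces via the cell decomposition Theorem \ref{cellth} and axioms (A5)--(A8) to verifying that the valuations of the matrix coefficients of $g_x$ are controlled by definable conditions on $(a,\tau,\ell',x)$. Once that bookkeeping is in place, no new idea is needed: Theorem \ref{mt2} becomes a direct packaging of the definability results of the earlier sections into the black box of Theorem \ref{tran}, and Theorem \ref{mt1} is an immediate specialization to the case $M=M'=G$ in the recursion.
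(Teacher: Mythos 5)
Your proposal is correct and takes essentially the same approach as the paper: the paper's stated proof is a one-line appeal to the preceding definability work and Theorem \ref{tran}, and your write-up is a faithful, more explicit unpacking of that argument — assembling the parameter space $(a,\tau,\ell')$, citing Lemmas \ref{lemma:u} and \ref{lemma:D} for the weight and transfer factor, unwinding the recursion for $s^G_M$ into a $\bQ$-linear combination of weighted orbital integrals, and then applying Theorem \ref{tran} in its $\otimes\bR$ form.
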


\begin{proof}  We have successfully represented
all the data entering into the fundamental lemma
within the general framework of identities
of motivic integrals of constructible functions.
By the transfer principle given in Theorem \ref{tran}, the fundamental lemma holds for
all $p$-adic fields of characteristic zero,
for sufficiently large primes $p$.
\end{proof}

By the main
result of \cite{Ha-RUE}, the
unweighted fundamental lemma
holds for all elements of the Hecke algebra
for all $p$, once it holds for all sufficiently
large $p$ (for a collection of endoscopic data
obtained by descent from the original data $(D,\theta)$).  Thus, in the unweighted situation,
we can derive the fundamental lemma for all
local fields of characteristic zero, without
restriction on $p$, once the fundamental lemma
is known for a suitable collection of cases
in positive characteristic.

\section{Additive characters and the relative fundamental lemma}

\subsection{Adding exponentials}It is also possible to  enlarge $\cC (X)$ to a ring
$\cC (X)^{\exp}$ also containing
{motivic analogues of exponential functions}
and to construct a natural extension of the previous theory to
$\cC^{\exp}$.

This is performed as follows in \cite{crexp} \cite{clexp}.
Let $X$ be in $\Def_k$. We consider the category $\RDefe_{X}$
whose objects are triples $(Y\to X, \xi, g)$ with $Y$ in
$\RDef_{X}$ and $\xi : Y \rightarrow h[0,1,0]$ and $g : Y
\rightarrow h[1,0,0]$ morphisms in $\Def_k$. A morphism
$(Y'\to X, \xi', g') \rightarrow (Y\to X, \xi, g)$
in $\RDefe_{X}$ is a morphism $h : Y' \rightarrow Y$ in $\Def_X$ such that
$\xi' = \xi \circ h$ and $g' = g \circ h$.
The functor sending
$Y$ in $\RDef_{X}$ to $(Y, 0, 0)$, with $0$ denoting the constant morphism
with value $0$ in $h[0,1,0]$, resp. $h[1, 0,0]$
being fully faithful, we may consider $\RDef_{X}$ as a full subcategory of
$\RDefe_{X}$. To the category $\RDefe_{X}$ one assigns a Grothendieck ring
$K_0 (\RDefe_{X})$ defined as follows. As an abelian group it is
 the
quotient of the free abelian group over symbols $[Y \rightarrow X,
\xi,g]$ with $(Y \rightarrow X, \xi,g)$ in $\RDefe_{X}$ by the
following
four
relations
\begin{equation}[Y \rightarrow X, \xi,g] = [Y' \rightarrow X, \xi',g']
\end{equation}
for $(Y \rightarrow X, \xi,g)$ isomorphic to $(Y' \rightarrow X,
\xi',g')$,
\begin{equation}
\begin{split}
[(Y \cup Y') \rightarrow X, \xi, g] & + [(Y \cap Y') \rightarrow X,
\xi_{|Y \cap Y'},g_{|Y \cap Y'}]\\ &= [Y \rightarrow X, \xi_{|Y},
g_{|Y}] + [Y' \rightarrow X, \xi_{|Y'}, g_{|Y'}]
\end{split}
\end{equation}
for $Y$ and $Y'$ definable subassignments of some $W$ in $\RDef_X$
and $\xi$, $g$ defined on $Y \cup Y'$,
 \begin{equation} [Y\to X,\xi,g+h]=[Y\to X,\xi + \overline h,g]
 \end{equation}
for $h:Y\to h[1,0,0]$ a definable morphism with $\ord(h(y)) \geq 0$ for
all $y$ in $Y$ and $\overline h$ the reduction of $h$ modulo $t$,
and
 \begin{equation}
 [Y[0,1,0]\to X,\xi+p,g]=0
 \end{equation}
when $p:Y[0,1,0]\to h[0,1,0]$ is the projection and when
$Y[0,1,0]\to X$, $g$, and $\xi$ factorize through the projection
$Y[0,1,0]\to Y$.
Fiber product endows $K_0 (\RDefe_{X})$ with a ring structure.

Finally, one defines the ring
$\cC  (X)^{\rm exp}$ of exponential constructible functions
as
$\cC  (X)^{\rm exp}:=\cC (X)\otimes_{K_0 (\RDef_{X})} K_0 (\RDefe_{X})$.
One defines similarly $\ccC (X)^{\rm exp}$ and ${\rm I}_S \ccC (X)^{\rm exp}$.

In \cite{crexp} \cite{clexp}, given $S$ in
$\Def_k$, we construct for a morphism
$f : X \rightarrow Y$ in
$\Def_S$
 a push-forward
$f_!  : {\rm I}_S \ccC  (X)^{\rm exp} \rightarrow
{\rm I}_S \ccC  (Y)^{\rm exp}$
extending
$f_!  : {\rm I}_S \ccC  (X) \rightarrow
{\rm I}_S \ccC  (Y)$ and characterized by certain natural axioms.
In particular, the construction of the measure $\mu$
and its relative version $\mu_{\Lambda}$ extend to the exponential setting.


\subsection{Specialization and transfer principle}\label{fym}
 We denote by
 $\cC (S, \LO)^{\rm exp} $ the ring of constructible functions on $X$
definable in $\LO$.
We denote by  $\cD_K$ the set of additive characters
$\psi:K\to\bC^\times$ such that  $\psi(x)=\exp((2\pi i /p)
{\rm Tr}_{k_K}
(\bar x))$
for $x\in R_K$,
with $p$ the characteristic of $k_K$,
${\rm Tr}_{k_K}$
the trace of $k_K$ relatively to its prime field and   $\bar x$ the class of  $x$ in $k_K$.

The construction of specialization explained in \ref{tp} extends
as follows to the exponential case.
Let
$\varphi$ be in $K_0(\RDef_X (\LO))^{\rm exp}$
of the form $[W,g,\xi]$. For $\psi_K$ in $\cD_K$, one specializes
$\varphi$ into the function
$\varphi_{K,\psi_K}:X_K\to\bC$ given by \[x
\mapsto \sum_{y\in \pi_K^{-1}(x)}\psi_K(g_K(y))\exp((2\pi i /p)
{\rm Tr}_{k_K}
(\xi_K(y)))\] for $K$ in
$\cC_{\cO,N}$ with $N \gg 0$.
One defines the specialization
$\varphi \mapsto \varphi_{K,\psi_K}$ for
$\varphi$ in $\cC(X,\LO)^{\rm exp}$ by tensor product.

One can show that if $\varphi$ is relatively integrable, then, for
$N \gg 0$ and every $K$ in $\cC_{\cO,N}$, {for every}  $\lambda$ in
$\Lambda_K$ and every $\psi_K$ in $\cD_K$, the restriction
$\varphi_{K, \psi_K, \lambda}$ of  $\varphi_{K, \psi_K}$ to
$f_K^{-1}(\lambda)$ is
 integrable.

We denote by
$\mu_{\Lambda_K}(\varphi_{K, \psi_K})$ the function on  $\Lambda_K$ defined by \begin{equation}\lambda \longmapsto \mu (\varphi_{K, , \psi_K, \lambda}).\end{equation}

The results in \ref{tp}
generalize as follows to the exponential setting:

\begin{thm} [Exponential specialization, Cluckers-Loeser \cite{crexp} \cite{clexp}]Let
$f : S \rightarrow \Lambda$ be a morphism in $\Def (\LO)$. Let
$\varphi$ be in $\cC (S, \LO)^{\rm exp}$ relatively integrable with
respect to $f$. For $N \gg 0$, for every $K$ in
$\cC_{\cO,N}$ and every $\psi_K$ in $\cD_K$, we have
\begin{equation}
\left(\mu_\Lambda(\varphi)\right)_{K, \psi_K}
=
\mu_{\Lambda_K}(\varphi_{K, \psi_K}).\end{equation}
\end{thm}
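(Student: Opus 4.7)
The plan is to reduce the exponential specialization statement to the already established non-exponential Theorem \ref{specializ} by exploiting the tensor product presentation $\cC(X)^{\rm exp} = \cC(X)\otimes_{K_0(\RDef_X)} K_0(\RDefe_X)$ together with the axiomatic characterization of the exponential push-forward. First I would reduce to checking the claimed identity on generators: it suffices to prove the identity for classes of the form $\varphi = [W \to S, g, \xi]$ with $W$ in $\RDef_S(\LO)$, $\xi : W \to h[0,1,0]$ and $g : W \to h[1,0,0]$, since both the motivic push-forward $\mu_\Lambda$ and the specialization map $\varphi \mapsto \varphi_{K,\psi_K}$ are $\cC(X)$-linear, and Theorem \ref{specializ} handles the $\cC(X)$-factor.

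Next, following the strategy used to prove Theorem \ref{mt} and Theorem \ref{specializ}, I would perform the integration over the fibers of $f:S\to\Lambda$ one coordinate at a time, using a version of the Denef-Pas Cell Decomposition Theorem \ref{cellth} adapted to carry the extra data $(g,\xi)$ along as parameters. Integration over residue-field variables and $\bZ$-variables reduces on the $p$-adic side to finite exponential sums $\sum_\xi \psi_K(\xi)\,(\dots)$ and on the motivic side to the push-forward $\pi_!$ in $K_0(\RDefe)$; the four defining relations of $K_0(\RDefe_X)$ are precisely designed to encode the arithmetic of such sums, e.g.~the fourth relation $[Y[0,1,0]\to X,\,\xi+p,\,g]=0$ corresponds to the vanishing $\sum_{\xi\in k_K}\psi_K(\xi+c)=0$. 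Hence these two cases of the recursion specialize term-by-term.

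Integration over a single valued-field variable reduces, via cell decomposition, to the cases of $0$-cells and $1$-cells. For $0$-cells specialization is immediate from (A8). For $1$-cells one must verify that the motivic integral
\begin{equation*}
\int_{\{\ord(z-c(x'))=\alpha(x'),\ \ac(z-c(x'))=\xi(x')\}} \psi\bigl(g(z,x')\bigr)\,dz
\end{equation*}
specializes correctly; after the standard change of variables $z = c(x')+\varpi^\alpha u$ with $\ac u = \xi$, the integral reduces to either a volume of a ball (when $g$ restricted to the cell becomes $t$-adically constant modulo $\cO[[t]]$) or a Gauss-sum-type exponential sum (when it does not). Both outcomes are already encoded in the motivic formalism of \cite{crexp}\cite{clexp} and specialize to the corresponding $p$-adic formulas.

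The main obstacle is not any one of these steps in isolation but showing that all reductions commute with specialization uniformly, independently of the chosen cell decomposition and of the order in which valued-field variables are integrated. This is the exponential analogue of the motivic Fubini theorem, and it is the heart of the construction of $f_!$ on $\mathrm{I}_S \ccC(X)^{\rm exp}$ in \cite{crexp}\cite{clexp}. Once this is in place, a single $N$ working uniformly in $K\in\cC_{\cO,N}$ and $\psi_K\in\cD_K$ comes from the finiteness of the cell decomposition (only finitely many formulas appear), together with the analogue of the Ax-Kochen-Er\v sov transfer already built into the non-exponential Theorem \ref{specializ}.
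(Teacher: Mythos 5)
The paper does not contain a proof of this theorem: it is stated as a result of Cluckers--Loeser and attributed to \cite{crexp} and \cite{clexp}, with no argument given in the text. There is therefore no ``paper's own proof'' to compare against, and your proposal can only be judged on its own terms as a reconstruction of what the cited proof must contain.

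As a roadmap your outline is plausible and broadly consistent with how the Cluckers--Loeser machinery works: reduction to generators $[W\to S,\xi,g]$ via the tensor-product presentation of $\cC(X)^{\rm exp}$, a cell-decomposition-driven recursion one valued-field variable at a time, the reading of the fourth relation in $K_0(\RDefe_X)$ as the motivic shadow of $\sum_{\xi\in k_K}\psi_K(\xi+c)=0$, and the correct identification of the real obstacle as an exponential Fubini theorem that makes the whole construction independent of the chosen cell decomposition and order of variables. But several points are glossed over in ways that hide the actual content. The invocation of (A8) is not quite right: (A8) as stated is an axiom for the \emph{non-exponential} push-forward on $\ccC_+$, and the exponential theory has its own axioms (the paper only says $f_!$ on ${\rm I}_S\ccC(X)^{\rm exp}$ is ``characterized by certain natural axioms''), so one cannot cite (A8) directly; moreover, even in the non-exponential case (A8) is a characterization of the motivic push-forward, and one still has to check separately that $\bL^{(\ordjac f)\circ f^{-1}}$ and its $p$-adic counterpart match after specialization. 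More significantly, the uniformity in $N$ --- a single threshold working simultaneously for every $K$ in $\cC_{\cO,N}$ and every $\psi_K\in\cD_K$ --- is not a consequence of ``finiteness of the cell decomposition'' alone; it requires the full compactness/uniformity arguments of \cite{clexp} and is essentially the theorem, not a corollary of it. Your write-up names the correct structure but defers nearly all of the mathematical content to the citations, so it should be read as a plausible outline rather than a proof.
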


\begin{thm} [Exponential abstract transfer principle, Cluckers-Loeser \cite{crexp} \cite{clexp}]Let $\varphi$ be in
$\cC (\Lambda, \LO)^{\rm exp}$. There exists   $N$ such that
for every $K_1$, $K_2$ in  $\cC_{\cO,N}$ with  $k_{K_1}\simeq
k_{K_2}$,
\begin{equation}\varphi_{K_1, \psi_{K_1}} = 0 \quad \text{for all} \quad
\psi_{K_1}\in\cD_{K_1}
\quad
\text{if and only if}
\quad
\varphi_{K_2, \psi_{K_2}} = 0 \quad \text{for all} \quad
\psi_{K_2}\in\cD_{K_2}.
\end{equation}
\end{thm}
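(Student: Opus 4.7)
The plan is to reduce the exponential abstract transfer principle to its non-exponential counterpart (Theorem~\ref{abstract}) by absorbing the character parameter into the function and then eliminating the exponential via orthogonality.

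The first step is to parameterize $\cD_K$ as a definable family. Any two elements of $\cD_K$ differ by a twist $\psi_K \mapsto \psi_K^{a}$ with $\psi_K^{a}(x) = \psi_K(ax)$, and one checks that $\psi_K^{a} \in \cD_K$ precisely when $a \in 1 + M_K$ (since both the triviality on $M_K$ and the prescribed restriction to $R_K$ must be preserved). The set $1 + M_K$ is defined by $\ord(a-1) \geq 1$ and is $\LO$-definable. Given $\varphi \in \cC(\Lambda,\LO)^{\rm exp}$, represent it by a finite sum of classes $[W \to \Lambda, \xi, g] \otimes c$, and form $\tilde\varphi \in \cC(\Lambda \times h[1,0,0],\LO)^{\rm exp}$ by replacing $g$ with $a g$, where $a$ is the new variable. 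Then $\tilde\varphi_{K, \psi_K^{(0)}}(\lambda,a) = \varphi_{K, \psi_K^a}(\lambda)$ for $a \in 1 + M_K$, where $\psi_K^{(0)}$ is the standard character. The universal quantifier over $\cD_K$ has now been converted into vanishing on the definable set $\Lambda \times (1+M_K)$ with respect to a \emph{fixed} character.

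The second step is to eliminate the exponential. The plan is to consider the non-negative object $\tilde\varphi \cdot \overline{\tilde\varphi}$, realized in $\cC^{\rm exp}$ by taking a fibre product of $W$ with itself and replacing $(\xi,g)$ by $(\xi - \xi', g - g')$ on the two factors. Integrating this along the parameter $a \in 1 + M_K$ using (\ref{relativemu}) and invoking the orthogonality of the family $\{\psi_K^a\}$ should produce a constructible function $\Phi \in \cC(\Lambda,\LO)$ (no exponential) with the property that $\Phi_K \equiv 0$ on $\Lambda_K$ if and only if $\varphi_{K,\psi_K} \equiv 0$ for all $\psi_K \in \cD_K$; this equivalence uses the exponential specialization theorem to guarantee that $\Phi_K$ really computes the $L^2$-norm of the family, combined with the fact that on the residue-field side the exponential is given by the canonical trace character, so the relations in $K_0(\RDefe_X)$ implement exactly the Fourier-type cancellations needed.

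Once $\Phi \in \cC(\Lambda,\LO)$ is in hand, Theorem~\ref{abstract} applies directly to $\Phi$, yielding an integer $N$ such that for $K_1, K_2 \in \cC_{\cO,N}$ with $k_{K_1} \simeq k_{K_2}$ one has $\Phi_{K_1} = 0$ iff $\Phi_{K_2} = 0$. Unwinding the equivalence of Step~2 produces the conclusion. The main obstacle is Step~2: making the orthogonality/Parseval argument rigorous inside the Cluckers--Loeser formalism, i.e., proving that the motivic integral of $|\tilde\varphi|^2$ against the parameter $a$ admits a non-exponential representative. This is where the precise definition of the relations in $K_0(\RDefe_X)$ — in particular the relation forcing $[Y[0,1,0] \to X, \xi + p, g] = 0$, which is the motivic shadow of the orthogonality of residue-field characters — does the heavy lifting. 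If that step proves too delicate, an alternative is to run the original proof of Theorem~\ref{abstract} inside $\cC^{\rm exp}$, carrying the character as a formal parameter throughout; the key ingredient would then be an analogue of Ax--Kochen--Ershov that accommodates an additional predicate for the additive character, which the construction of $\cC^{\rm exp}$ is designed to encode.
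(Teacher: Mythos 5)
This theorem is cited from \cite{crexp} and \cite{clexp}; the paper gives no proof for you to compare against, so your proposal must stand on its own.

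Your Step 1 (parametrizing $\cD_K$ by $a\in 1+M_K$ via $\psi_K\mapsto\psi_K^a$) and Step 2 (absorbing the character into a new valued-field parameter, so that universal quantification over $\cD_K$ becomes vanishing on $\Lambda\times(1+M_K)$ with respect to one fixed character) are correct and the verifications you sketch do work. The gap is in Step 3, and it is fatal as written: integrating $\lvert\tilde\varphi\rvert^2$ over $a\in 1+M_K$ does \emph{not} produce an element of $\cC(\Lambda,\LO)$. Writing $b=g(y)-g(y')$, the inner $a$-integral gives $q^{-1}\psi_K(b)$ on the locus $\ord(b)\geq 0$ and $0$ elsewhere; by the relation $[Y,\xi,g+h]=[Y,\xi+\overline h,g]$ for $\ord(h)\geq 0$ the surviving valued-field exponential $\psi_K(b)$ is traded for a residue-field exponential in $\overline b$. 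So the upshot is an element of $\cC(\Lambda,\LO)^{\rm exp}$ whose exponential has been pushed entirely into the residue-field slot, but it is \emph{still} an exponential constructible function. Theorem~\ref{abstract} is stated only for $\cC(\Lambda,\LO)$ and cannot be invoked. The relation $[Y[0,1,0]\to X,\xi+p,g]=0$ will not rescue you here: it kills the class only when the residue-field character argument contains a fresh free linear variable, whereas in your $\Phi$ the argument $\xi(y)-\xi(y')+\overline{g(y)-g(y')}$ is a fixed definable function on the fibre product, not a free variable.

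You have effectively reduced the exponential transfer principle to the special case of residue-field exponentials, which is a genuine simplification (residue-field Gauss sums are insensitive to the valued-field structure, and one can hope to transfer them directly once $k_{K_1}\simeq k_{K_2}$). But that residual case is precisely the nontrivial content that a proof must still supply; it is not an application of Theorem~\ref{abstract}. Your fallback suggestion --- rerunning the argument behind Theorem~\ref{abstract} with the character carried along as a formal parameter, via an AKE-type statement for $\cC^{\rm exp}$ --- is the right instinct and is much closer to what Cluckers--Loeser actually do in \cite{clexp}, but as you wrote it this is only a gesture, not an argument.
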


\begin{thm} [Exponential transfer  principle for integrals with parameters, Cluckers-Loeser  \cite{crexp} \cite{clexp}]\label{tranexp}
Let  $S \rightarrow \Lambda$ and   $S' \rightarrow \Lambda$ be morphisms in
 $\Def (\LO)$. Let  $\varphi$ and $\varphi'$ be  relatively integrable functions in   $\cC (S, \LO)^{\rm exp}$
 and
 $\cC (S', \LO)^{\rm exp}$, respectively.
 There exists
 $N$ such that
for every $K_1$, $K_2$ in  $\cC_{\cO,N}$ with  $k_{K_1}\simeq
k_{K_2}$,
\begin{equation*}
\begin{split}
\mu_{ \Lambda_{K_1}} (\varphi_{K_1, \psi_{K_1}}) &= \mu_{
\Lambda_{K_1}} (\varphi'_{K_1, \psi_{K_1}})   \quad \text{for all} \quad
\psi_{K_1}\in\cD_{K_1} \\
 \quad
&\text{if and only if}\\
 \quad
\mu_{ \Lambda_{K_2}} (\varphi_{K_2, \psi_{K_2}}) &= \mu_{
\Lambda_{K_2}} (\varphi'_{K_2, \psi_{K_2}})  \quad \text{for all} \quad
\psi_{K_2}\in \cD_{K_2}.
\end{split}\end{equation*}
\end{thm}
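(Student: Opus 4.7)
The plan is to deduce Theorem~\ref{tranexp} by combining the Exponential Specialization theorem with the Exponential Abstract Transfer Principle, exactly paralleling how Theorem~\ref{tran} is obtained from Theorem~\ref{specializ} and Theorem~\ref{abstract}. The key observation is that although $\varphi$ and $\varphi'$ live on different spaces $S$ and $S'$ and cannot be compared directly, their relative pushforwards $\mu_\Lambda(\varphi)$ and $\mu_\Lambda(\varphi')$ both live in $\cC(\Lambda, \LO)^{\rm exp}$, so their difference is an object to which the abstract transfer principle applies.

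First I would invoke relative integrability of $\varphi$ with respect to $f : S \to \Lambda$ (and similarly for $\varphi'$) to form $\mu_\Lambda(\varphi)$ and $\mu_\Lambda(\varphi')$ as elements of $\cC(\Lambda, \LO)^{\rm exp}$, using the extension to the exponential setting of the relative motivic measure from~(\ref{relativemu}). Since $\cC(\Lambda,\LO)^{\rm exp}$ is a ring, the difference
\begin{equation*}
\Phi := \mu_\Lambda(\varphi) - \mu_\Lambda(\varphi') \in \cC(\Lambda,\LO)^{\rm exp}
\end{equation*}
is well-defined.

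Next I would apply the Exponential Specialization theorem (stated just above) to $\varphi$ and $\varphi'$ separately: for $N$ large enough, for every $K \in \cC_{\cO,N}$ and every $\psi_K \in \cD_K$,
\begin{equation*}
\bigl(\mu_\Lambda(\varphi)\bigr)_{K,\psi_K} = \mu_{\Lambda_K}(\varphi_{K,\psi_K}),
\qquad
\bigl(\mu_\Lambda(\varphi')\bigr)_{K,\psi_K} = \mu_{\Lambda_K}(\varphi'_{K,\psi_K}).
\end{equation*}
Since specialization is compatible with subtraction (it is a ring homomorphism on constructible functions extended to the exponential setting by tensor product), it follows that $\Phi_{K,\psi_K} = \mu_{\Lambda_K}(\varphi_{K,\psi_K}) - \mu_{\Lambda_K}(\varphi'_{K,\psi_K})$. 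Consequently, the equality of the two parameterized integrals for all $\psi_K \in \cD_K$ is literally the condition $\Phi_{K,\psi_K} = 0$ for all $\psi_K \in \cD_K$.

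Finally, I would apply the Exponential Abstract Transfer Principle to $\Phi \in \cC(\Lambda,\LO)^{\rm exp}$: this yields an integer $N$ (which we may enlarge to absorb the $N$ from the specialization step) such that for every pair $K_1, K_2 \in \cC_{\cO,N}$ with $k_{K_1} \simeq k_{K_2}$, the condition $\Phi_{K_1,\psi_{K_1}} = 0$ for all $\psi_{K_1} \in \cD_{K_1}$ is equivalent to $\Phi_{K_2,\psi_{K_2}} = 0$ for all $\psi_{K_2} \in \cD_{K_2}$. Combined with the translation of these vanishing conditions into equalities of motivic integrals via the specialization step, this gives precisely the claimed biconditional. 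There is no serious obstacle here once the exponential versions of specialization and abstract transfer are in hand; the only points requiring mild care are (i) taking the maximum of the threshold values of $N$ coming from each of the three applications, and (ii) checking that subtraction on $\cC(\Lambda,\LO)^{\rm exp}$ commutes with specialization at $(K,\psi_K)$, which is immediate from the tensor-product construction of $\cC^{\rm exp}$ and the $\bZ$-linear definition of $\varphi \mapsto \varphi_{K,\psi_K}$.
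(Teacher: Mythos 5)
Your proof is correct and follows exactly the route the paper intends: just as Theorem~\ref{tran} is obtained by "putting together" Theorem~\ref{specializ} and Theorem~\ref{abstract}, you combine the exponential specialization theorem with the exponential abstract transfer principle, applied to the difference $\mu_\Lambda(\varphi)-\mu_\Lambda(\varphi')$ in $\cC(\Lambda,\LO)^{\rm exp}$. The paper gives no further detail, and none is needed; your care about enlarging $N$ and the compatibility of specialization with subtraction is exactly the right level of rigor.
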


\subsection{Jacquet-Ye integrals and the relative fundamental lemma}\label{jyf}
A specific situation where  Theorem \ref{tran} applies is that of
Jacquet-Ye integrals.
Let $E / F$ be a
unramified
degree two extension of non archimedean local fields of residue characteristic $\not= 2$ and let $\psi$ be a non trivial additive character of $F$ of conductor $\cO_F$.
Let $N_n$ be the group of upper triangular matrices with $1$'s on the  diagonal
and consider the character
$\theta : N_n (F) \rightarrow \bC^{\times}$ given
by
\begin{equation}
\theta (u) := \psi (\sum_i u_{i, i + 1}).
\end{equation}
For $a$ the diagonal matrix $(a_1, \cdots, a_n)$  with $a_i$ in $F^{\times}$,
we
consider the integral
\begin{equation}
I (a) :=
\int_{N_n (F) \times  N_n (F)}  {\bf 1}_{M_n (\cO_F)} ({}^tu_1 a u_2) \, \theta (u_1 u_2) \, du_1 du_2.
\end{equation}
Here $du$ denote the Haar measure on $N_n (F)$
with the normalization
$\int_{N_n (\cO_F)} du = 1$.

Similarly, one defines
\begin{equation}
J (a) :=
\int_{N_n (E)}  {\bf 1}_{M_n (\cO_E) \cap H_n} ({}^t {\bar u} a u) \, \theta (u \bar u) \, du,
\end{equation}
with $H_n$ the set of Hermitian matrices.

The Jacquet-Ye Conjecture \cite{JY}, proved by  Ng\^o \cite{Ngo} over function fields and by Jacquet \cite{J} in general,
asserts that
\begin{equation}\label{jy}
I (a) = \gamma (a) \, J (a)
\end{equation}
with
$$
\gamma (a) := \prod_{1 \leq i \leq n - 1} \eta (a_1 \cdots a_i),
$$
and $\eta$  the
unramified
multiplicative character of order 2 on $F^{\times}$.

It should be  clear by now to the reader that the exponential version of the Transfer Theorem \ref{tranexp} applies to
(\ref{jy})
using the proxies  for field
extensions explained in section \ref{sec:field} and viewing $a$ as a parameter. 
Note that the discrepancy between the conditions on conductors in \ref{fym}
and \ref{jyf} is handled by performing an homothety of ratio $t$.
Also it is most likely that
 Theorem \ref{tranexp} can be used for  other versions of the relative fundamental lemma.



\begin{thebibliography}{CHMS94}

\bibitem{art88} J. Arthur, \textit{The local behaviour of weighted orbital
  integrals}, Duke Math. J., \textbf{56} (1988),  223--293.

\bibitem{art98} J. Arthur, \textit{Canonical normalization of weighted characters
and a transfer conjecture}, C. R. Math. Acad. Sci. Soc. R. Can. \textbf{20} (1998),
33-52.

\bibitem{artIF} J. Arthur, \textit{The trace formula in invariant form},
Annals of Math.   \textbf{114} (1981), 1--74.

\bibitem{art} J. Arthur, \textit{A Stable Trace Formula. I. General
Expansions}, J. Inst. Math. Jussieu
\textbf{1} (2002), 175-277.



\bibitem{AK2}J. Ax and S. Kochen,
\textit{Diophantine problems over local fields. II.: A complete set
of axioms for $p$-adic number theory},
Amer. J. Math. \textbf{87} (1965), 631--648.






\bibitem{ch-la} H. Chaudouard, G. Laumon, \textit{Sur l'homologie des
  fibres de Springer affines tronqu\'ees}, preprint.


\bibitem{cl}{R. Cluckers,  F. Loeser},
\textit{Constructible motivic functions and motivic integration},
math.AG/0410203.


\bibitem{clake} R. Cluckers, F. Loeser,
Ax-Kochen-Ersov Theorems for
$p$-adic integrals and motivic integration,
in {\it Geometric methods in algebra and number theory, edited by F. Bogomolov and Y. Tschinkel}, Progress in Mathematics 235, 109-137 (2005), Birkh\"auser.


\bibitem{crexp}
{R. Cluckers,  F. Loeser},
\textit{Fonctions constructibles exponentielles, transformation de Fourier motivique et principe de transfert}, C. R. Math. Acad. Sci. Paris, \textbf{341}  (2005),
741--746.

\bibitem{clexp} R. Cluckers, F. Loeser,
\textit{Constructible exponential functions, motivic Fourier transform and transfer principle}, math.AG/0512022,
Ann. of Maths, to appear.



\bibitem{cun} C. Cunningham, T. Hales, \textit{Good
Orbital Integrals},
Represent. Theory \textbf{8} (2004), 414Ð
457.


\bibitem{JAMS}
J. Denef, F. Loeser,
\textit{Definable sets, motives and $p$-adic integrals},
J. Amer. Math. Soc.
\textbf{14} (2001), 429--469.

\bibitem{Ersov}J. Er{\v s}ov,
\textit{On the elementary theory of maximal normed fields},
Dokl. Akad. Nauk SSSR \textbf{165} (1965), 21--23.





\bibitem{Hales} T. Hales,
A simple definition of transfer factors for unramified groups, in \textit{Representation theory of groups and algebras}, 109--134, Contemp. Math., 145, Amer. Math. Soc., Providence, RI, 1993.

\bibitem{Ha-SFL} T. Hales,
A statement of the fundamental lemma, in \textit{Harmonic analysis, the trace formula, and Shimura varieties}, 643--658, Clay Math. Proc., 4, Amer. Math. Soc., Providence, RI, 2005.

\bibitem{Ha-RUE} T. Hales,
\textit{On the fundamental lemma for standard endoscopy: reduction to unit elements},  Canad. J. Math. \textbf{47}(1995), no. 5, 974--994.





\bibitem{JY}
H. Jacquet, Y. Ye,
 \textit{Relative Kloosterman integrals for ${\rm GL}(3)$},
Bull. Soc. Math. France \textbf{120} (1992), 263--295.


\bibitem{J}
H. Jacquet,  \textit{Kloosterman identities over a quadratic extension},
Ann. of Math. \textbf{160}  (2004),  755--779.





\bibitem{Ko} B. Kostant, \textit{Lie group representations on
polynomial rings}, Am. J. Math. \textbf{85} (1963), 327-404.

\bibitem{kottwitz} R. Kottwitz,   \textit{Stable trace formula: elliptic singular terms}, Math. Ann. \textbf{275} (1986), 365--399.

\bibitem{kottwitz2} R. Kottwitz,
 \textit{Stable trace formula: cuspidal tempered terms}, Duke Math. J. \textbf{51} (1984), 611--650.

\bibitem{twisted} R. Kottwitz, D. Shelstad,
    \textit{Foundations of Twisted Endoscopy},
        Ast\'erisque \textbf{255}, 1999.

\bibitem{kott-clay} R. Kottwitz, Harmonic analysis on reductive $p$-adic groups and Lie algebras,
\textit{Harmonic analysis, the trace formula, and Shimura varieties}, 393--522, Clay Math. Proc., 4, Amer. Math. Soc., Providence, RI, 2005.

\bibitem{LS} R. P. Langlands and D. Shelstad,
\textit{On the definition of transfer factors}, Math. Ann. \textbf{278} (1987), 219-271.



\bibitem{ln}
G. Laumon, B.~C. Ng\^o, \textit{Le lemme fondamental pour les
groupes unitaires}, math.AG/0404454.


 \bibitem{Ngo}B.~C. Ng\^o,
\textit{Faisceaux pervers, homomorphisme de changement de base et lemme fondamental de Jacquet et Ye},
Ann. Sci. \'Ecole Norm. Sup. (4) \textbf{32} (1999), 619--679.





\bibitem{ngo} B.~C. Ng\^o, Le lemme fondamental pour les
  alg\`ebres de Lie, preprint, 2007.


 \bibitem{Pas}
J. Pas,
\textit{Uniform $p$-adic cell decomposition and local zeta functions},
J. Reine Angew. Math.
\textbf{399}
(1989),
137--172.




\bibitem{corps} J.-P. Serre, {\it Corps Locaux},
Hermann, Paris, 1968.

\bibitem{Sh} D. Shelstad,
\textit{A formula for regular unipotent germs}, Ast\'erisque \textbf{171-172} (1989).

\bibitem{waltrac} J.-L. Waldspurger, \textit{Une formule des traces locale
pour les alg\`ebres de Lie $p$-adiques}, J. Reine Angew. Math.
\textbf{465} (1995), 41-99.

\bibitem{wald} J.-L. Waldspurger, \textit{Endoscopie et changement
 de caract\'eristique}, J. Inst. Math. Jussieu  \textbf{5}  (2006),   423--525.

\bibitem{w-weighted} J.-L. Waldspurger, \textit{Endoscopie et
 changement de caract\'eristique: int\'egrales orbitales
 pond\'er\'ees}, preprint.

\bibitem{Wh}  D. Whitehouse, \textit{The twisted weighted fundamental lemma for
the transfer of automorphic forms from $GSp(4)$ to $GL(4)$},
Ast\'erisque \textbf{302} (2005), 291--436.

 \end{thebibliography}
\end{document}